\newtheorem{theorem}{Theorem}[section]
\newtheorem{definition}[theorem]{Definition}
\newtheorem{lemma}[theorem]{Lemma}
\newtheorem{proposition}[theorem]{Proposition}
\theoremstyle{remark}
\newtheorem{remark}[theorem]{Remark}
\newcommand*{\fancyrefproplabelprefix}{prop}
\newcommand{\ep}{\varepsilon}
\newcommand{\laa}{\Lambda}
\newcommand{\oom}{\Omega}
\newcommand{\si}{\sigma}
\newcommand{\cf}{\mathcal F}
\newcommand{\HH}{\mathfrak H}
\newcommand{\lp}{\left(}
\newcommand{\rp}{\right)}
\newcommand{\lc}{\left[}
\newcommand{\rc}{\right]}
\def\RR{\mathbb{R}}
\def\NN{\mathbb{N}}
\def\EE{\mathbb{E}}
\def\cee{\mathcal{E}}
\def\css{\mathcal{S}}
\def\cff{\mathcal{F}}
\def\lt{\left}
\def\rt{\right}
\begin{document}
\author[J. Huang, K. L\^e, D. Nualart]{Jingyu Huang \and Khoa L\^e \and David Nualart}
\address{Jingyu Huang and Khoa L\^e: Mathematical Sciences Research Institute, Berkeley, California, 94720}
\email{jhuang@msri.org, khoa.le@ucalgary.ca}
\address{David Nualart: Department of Mathematics \\
The University of Kansas \\
Lawrence, Kansas, 66045}
\email{nualart@ku.edu}
\title{Large time asymptotics for the parabolic Anderson model driven by spatially correlated noise}
\begin{abstract}
	 In this paper we study the linear stochastic heat equation, also known as parabolic Anderson model, in multidimension driven by a Gaussian noise which is white in time and it has a correlated spatial covariance. Examples of such covariance include the Riesz kernel in any dimension and the covariance of the fractional Brownian motion with  Hurst parameter $H\in (\frac 14, \frac 12]$ in dimension one. First we establish the existence of a unique mild solution and we derive a Feynman-Kac formula for its moments using a family of independent Brownian bridges and assuming a general integrability condition on the initial data.  In the second part of the paper we  compute Lyapunov exponents, lower and upper exponential growth indices in terms of a variational quantity. The last part of the paper is devoted to study the phase transition property of the Anderson model. 
\end{abstract}
\subjclass[2010]{60G15; 60H07; 60H15; 60F10; 65C30}
\keywords{Stochastic heat equation, Brownian bridge,
Feynman-Kac formula, exponential growth index, phase transition}
\maketitle
\section{Introduction} 
In this paper we are interested in the stochastic heat equation  
	\begin{equation}\label{eqn:SHE}
		 \frac {\partial u}{\partial t} =\frac12\Delta u+u \dot{W}\, ,
	\end{equation}
	where $t\ge 0$, $x\in \RR^\ell$ $(\ell\ge1)$ and  $W$ is  a centered Gaussian field, which is white in time and it has a correlated spatial covariance. More precisely, we assume that the noise $W$ is described by a centered  Gaussian family  $W=\{ W(\phi), \phi \in 
	\css(\RR_+\times\RR^\ell)\}$, with covariance
	\begin{equation}\label{eqn:fouriercov}
		\EE [W(\phi)W(\psi)]=\frac1{(2 \pi)^\ell} \int_0^\infty\int_{\RR^\ell} \cff\phi(s,\xi)\overline{\cff\psi(s,\xi)}\mu(d \xi)ds,
	\end{equation}
	where $\mu$ is a tempered  positive measure and $\cff$ denotes the Fourier transform in the spatial variables. If the inverse Fourier transform of $\mu$ is a locally integrable function
	\begin{equation}\label{eqn:gamspec}
		\gamma(x)=\frac1{(2 \pi)^\ell}\int_{\RR^\ell} e^{i \xi  \cdot x}\mu(d \xi)\,,
	\end{equation}
	then, $\gamma$ is positive definite and \eqref{eqn:fouriercov} can be written as
	\begin{equation}\label{eqn:cov}
		\EE [W(\phi)W(\psi)]=\int_0^\infty\iint_{\RR^{2\ell}} \phi(s,x)\psi(s,y)\gamma(x-y)dxdyds\,.
	\end{equation}
	If, in addition, $\gamma$ is nonnegative, the existence of a mild solution for equation (\ref{eqn:SHE}) is equivalent to the following  Dalang's condition (see, for instance, \cite{Dal,DQ}), which plays a fundamental role in this theory,
	\begin{equation} \label{k5}
	\int_{ \RR^\ell}\frac {\mu(d\xi) }{1+ |\xi|^2} <\infty.
	\end{equation}	
	Throughout the paper, we denote by $|\cdot|$ the Euclidean norm in $\RR^\ell$ and by $x\cdot y$ the usual inner product between two vectors $x,y$ in $\RR^\ell$. 
 
  \medskip
 Recently, (see \cite{BJQ,HHLNT}) one has considered the case where $\ell=1$ and  $W$ has the covariance of a fractional Brownian motion with Hurst parameter $H\in (\frac 14, \frac 12)$ in the spatial variable. In this case,  $\gamma$ is not well-defined as a function but a distribution, the right-hand side  of \eqref{eqn:cov}  is thus not well-defined. The spectral measure is, in this case,  $\mu(d\xi)=c_{1,H} |\xi|^{1-2H}d\xi$, where
$c_{1,H}= \Gamma(2H+1)\sin(\pi H) $. 
The main aim of this  paper is to consider a  general class of rough noises,  with spatial dimension $1$, that includes this example. For the solution of equation  (\ref{eqn:SHE}) driven by these rough noises, we  plan to derive Feynman-Kac formulas for the moments,  compute Lyapunov exponents and exponential growth indices and study  the phase transition property of Lyapunov exponents.
For this purpose,  introduce   the following conditions on the spectral measure.
 
\begin{enumerate}[leftmargin=0cm,itemindent=1.4cm,label=(H.1)]
\item\label{H1} 
We assume that $\ell =1$, the spectral measure $\mu$ is absolutely continuous with respect to the Lebesgue measure on $\RR$ with density $f$, that is $\mu(d \xi)=f(\xi)d \xi$, and $f$ satisfies:  
\begin{itemize}
\item [(a)] For all $\xi,\eta$ in $\RR$ and for some constant $\kappa>0$,
\begin{equation}\label{eq:H11}
				f(\xi+\eta)\le  \kappa  (f(\xi)+f(\eta)).
\end{equation}
	\item	 [(b)]  {The spectral density $f$ satisfies
	\begin{equation}\label{eq:H12}
			\int_\RR\frac{f^2(\xi)}{1+|\xi|^{2}}d \xi<\infty\,.
		\end{equation}    }
\end{itemize}		
\end{enumerate}

Hypothesis \ref{H1} is satisfied by the  spectral density $f(\xi)=|\xi|^{1-2H}$, when   $H \in (\frac 14, \frac 12]$, with a constant $\kappa =1$.  On the other hand, (\ref{eq:H12}) in Hypothesis \ref{H1} clearly implies  the integrability condition  (\ref{k5}).

\medskip
On the other hand, our results also hold under the following hypothesis, which we formulate for a general dimension $\ell\ge 1$:
 
\begin{enumerate}[leftmargin=0cm,itemindent=1.4cm,label=(H.2)]
	\item\label{H2}  The inverse Fourier transform of $\mu(d\xi)$ is a nonnegative  locally integrable function $\gamma$ and $\mu$ satisfies Dalang's condition \eqref{k5}. { When $\ell=1$, $\gamma$ can also be the Dirac delta function  {$\delta_0$}, which corresponds to the space-time white noise case. }
\end{enumerate}

Examples of covariance functions satisfying \ref{H2} are the Riesz kernel $\gamma(x)=|x|^{-\eta}$, with $0<\eta <2\wedge \ell$, the space-time white noise in dimension one, where $\gamma =\delta_0$, and the multidimensional fractional Brownian motion, where $\gamma(x)= \prod_{i=1}^\ell H_i (2H_i-1) |x^i|^{2H_i-2}$, assuming $\sum_{i=1}^\ell H_i >\ell-1$ and $H_i >\frac 12$ for $i=1,\dots, \ell$.

\medskip
In this paper we assume that the initial condition $u_0$ is a {tempered measure} satisfying the condition
\begin{equation} \label{eq1}
(p_t *|u_0|)(x) <\infty \mbox{ for all } t>0 \mbox{ and } x\in\RR^\ell\,,
\end{equation}
where $p_t *|u_0|$ denotes the convolution of the  heat kernel $p_t$ and the {total variation of $u_0$, denoted by $|u_0|$}.   
This condition is equivalent to
\begin{equation}  \label{eq14}
\int_{\RR^\ell} e^{-a|x|^2}  {  |u_0| (dx) }<\infty,
\end{equation}
for all $a>0$. 
Using the Wiener chaos expansion, we show in Theorem \ref{thm1}  that, under hypotheses \ref{H1} or \ref{H2},   there exists a unique mild solution to equation (\ref{eqn:SHE}).  
Note that hypothesis (\ref{eq1}) is weaker than condition  $\int_{\RR}(1+|\xi|^{\frac{1}{2}-H})|\mathcal{F}u_0(\xi)|d\xi < \infty $,  imposed in \cite{HHLNT} in the case $\ell=1$ and  $\mu(d\xi)= c_{1,H} |\xi|^{1-2H}$.   
The existence of a unique solution for the stochastic heat  equation under Hypothesis \ref{H2}  with a nonlinear coefficient $\sigma$, when the initial condition $u_0$  a measure   satisfying (\ref{eq1}),  has been proved in \cite{CKK} (the one-dimensional case with space-time white noise was previously treated in \cite{CD}).

On the other hand, in   Proposition \ref{prop1}  we show  a Feynman-Kac formula for the moments of  the solution in   terms of an independent family of Brownian bridges.
	
	\medskip
The second part of our paper is devoted to {computing the speed of propagation of intermittent peaks. The propagation of the farthest high peaks was  first considered by Conus and Khoshnevisan in \cite{CK} for a {one-dimensional} heat equation driven by {a} space-time white noise ($\gamma= {\delta_0}$) with compactly supported initial condition, where it is shown that  there are intermittency fronts that move linearly with time as $\alpha t$. Namely, for any fixed $p \in [2, \infty)$, if $\alpha$ is sufficiently small, then the quantity $\sup_{|x|> \alpha t}\EE (|u(t,x)|^p)$ grows exponentially fast as $t $ tends to $\infty$; whereas the preceding quantity vanishes exponentially fast  if $\alpha$ is sufficiently large. To be more precise, the authors of   \cite{CK}  define for every $\alpha > 0$, 
\begin{equation}\label{linear growth index}
\mathcal{S}(\alpha):= \limsup_{t \to \infty} \frac{1}{t} \sup_{|x|> \alpha t} \log \EE (|u(t,x)|^p)\,,
\end{equation}
and think of $\alpha_L$ as an intermittency lower front if $\mathcal{S}(\alpha) < 0$ for all $\alpha > \alpha_L$, and of $\alpha_U$ as an intermittency upper front if $\mathcal{S}(\alpha)>0$ whenever $\alpha < \alpha_U$. In \cite{CK} it is shown that for each real number $p \geq 2$, $ 0 < \alpha_U \leq \alpha_L < \infty$, and when $p=2$, some  bounds for $\alpha_L$ and $\alpha_U$ are given. In a later work by Chen and Dalang \cite{CD}, using the method of iteration, it is proved that when $p=2$, there exists a critical number $\alpha^* =\frac{\lambda^2}{2}$ such that $\mathcal{S} (\alpha) < 0$ when $\alpha> \alpha^*$ while $\mathcal{S}(\alpha) > 0$ when $\alpha < \alpha^*$ (this property was first conjectured in \cite{CK}).  

In this paper, we will use methods from large { deviations}  to give a more general treatment of this problem. We will show that for a general class of covariance {functions} $\gamma$ and any positive integer $p \geq 2$, the $p$th moment of the solution to (\ref{eqn:SHE}) with certain initial condition develops high peaks which travel at a constant speed when the time is large, and we are able to express the speed, using the variation expression (see (\ref{v5}) below). In the particular case when the initial condition $u_0(x)=1$, we are able to find the precise Lyapunov exponent of the solution. 

Our approach is based on the Feynman-Kac formula for the moments of the solution, usually expressed in terms of independent Brownian motions. As is shown in Theorem \ref{thm:speed}, the propagation of  high peaks largely depends on the initial condition, thus we use the Brownian bridges in the Feynman-Kac moment formula, to isolate the initial condition, which {simplifies its analysis} (see Proposition \ref{prop1}). The procedure is to first get some large deviation results for the long term asymptotic of exponential functionals of Brownian bridges with the regularized covariance function (see, for instance Lemma \ref{lem:Vupper}), then {make} some approximation to treat  general covariance {functions}.  To do this we introduce the approximate covariance
 }
	\begin{equation}\label{eqn:ge}
		\gamma_ \epsilon(x)=\frac1{(2 \pi)^\ell}\int_{\RR^\ell} e^{- \epsilon|\xi|^2} e^{i \xi \cdot x}\mu(d \xi)\,.
	\end{equation}
	For each $\epsilon>0$, the spectral measure of $\gamma_ \epsilon$ is $e^{-\epsilon |\xi|^2}\mu(d \xi)$, which has finite total mass because  $\mu$ is a tempered measure. Thus, $\gamma_ \epsilon$ is a bounded positive definite function. 
	Most of our results are obtained first for $\gamma_ \epsilon$ then for $\gamma$ by the passage $\epsilon\downarrow0$. The same methodology has been used in \cite{ChPh15}.

\medskip
We denote  by $ (\RR^\ell)^n$ (or simply  $\RR^{n \ell  }$)  the set of $n$-ples $x=(x^1, \dots, x^n)$, such that, $x^j\in \RR^\ell$ for each $j=1,\dots, n$. For each $\ell\ge1$, let $\mathcal{F}_{n\ell}$ be the collection of  functions $g\in H^1( \RR^{n\ell  }) : = \{f: \RR^{n\ell} \to \RR\,|\, \int_{\RR^{ n\ell  }} |f(x)|^2 dx < \infty \ \text{and} \ \int_{\RR^{n  \ell  }} |\nabla f(x)|^2 dx < \infty\}$ such that $\|g\|_{L^2(\RR^{ n \ell  } )}=1$. For appropriate Schwartz distributions $\phi\in \mathcal{S}'(\RR^\ell)$, we consider the functional
	\begin{equation}   \label{v5}
		\cee_n(\phi)=\sup_{g\in\mathcal{F}_{n\ell}}\left(\int_{\RR^ {n\ell}} \sum_{1\le j<k\le n} \phi(x^j-x^k) g^2(x)  dx-\frac12\int_{\RR^{n\ell}} |\nabla g(x)|^2dx \right).
	\end{equation}
 We can also represent $\cee_n$ in Fourier mode
	\begin{equation}   \label{v5a}
	  \mathcal{E}_n(\phi)= \sup_{h\in \mathcal{A}_{n\ell}}   \left((2 \pi)^{-\ell} \int_{\RR^{\ell}} \sum_{1\le j<k\le n}    (h*h)(e_{jk} (\xi)) \cff \phi(\xi) d\xi - \frac 12
	  \int_{\RR^{n\ell}} |\xi |^2 |h(\xi)|^2 d\xi \right)\,,
	\end{equation}
where 
\begin{equation}
\mathcal{A}_{n\ell}= \lt\{ h: \RR^{n\ell} \rightarrow \mathbb{C}\,\big|\, \| h\|^2 _{L^2(\RR^{n\ell})} =1,  \int_{\RR^{n\ell}} |\xi |^2 |h(\xi)|^2 d\xi  <\infty \ \text{and} \ \overline{h(\xi)} = h(-\xi)\rt\}\,,
\end{equation}
and for each $1\le j<k\le n$ and $\xi \in \RR^{\ell}$, $e_{jk}(\xi):=(\xi^{1}_{jk},\dots,\xi^{n}_{jk} )$ is the vector in $\RR^{n\ell}$ with all $\xi^{i}_{jk}$, $1\leq i \leq n$, equal to $0$ except $\xi_{jk}^k=\xi$ and $\xi_{jk}^j =-\xi$. For more details on the above expression, we refer the readers to Lemma \ref{lem:e epsilon finite} and Proposition \ref{prop:Efinite}. Using the previous expression, we can define the functional $\cee_n(\gamma)$ assuming $\mu$ is a tempered measure, by 
	\begin{equation}  \label{id:ehat}
	  \mathcal{E}_n(\gamma)= \sup_{h\in \mathcal{A}_{n\ell}}   \left( (2 \pi)^{-\ell} \int_{\RR^{\ell}} \sum_{1\le j<k\le n}    (h*h)(e_{jk} (\xi)) \mu (d\xi) - \frac 12
	  \int_{\RR^{n\ell}} |\xi |^2 |h(\xi)|^2 d\xi \right)\,.
	\end{equation}
	We will see later (Proposition \ref{prop:Efinite}) that $\cee_n(\gamma)$ is a real positive number under hypotheses   \ref{H1} or \ref{H2}.

	\medskip
	 Let $u(t,x)$ be the solution of the stochastic heat equation (\ref{eqn:SHE}) with initial condition $u_0$ and multiplicative noise corresponding to $\dot{W}$. 
 We are interested in the asymptotic behavior as $t\rightarrow \infty$ of the moments
 $\EE \left[\prod_{j=1}^n u(t,x^j)\right]$, where $x^j \in \RR^\ell$, 
for general covariances $\gamma$ satisfying hypotheses  \ref{H1} or \ref{H2} and nonnegative  initial data $u_0$ satisfying (\ref{eq1}). 
 In this direction,  we have obtained the following asymptotic results.
	\begin{theorem}\label{thm:largetimemoment} We assume  either \ref{H1} or \ref{H2}.  Let $u_0$ be a nonnegative function  satisfying (\ref{eq1}). Let $u$ be the solution to the stochastic heat equation \eqref{eqn:SHE} with initial condition at $u_0$. Then for every integer $n\ge2$,
		\begin{align}\label{est:upper}
			\limsup_{t\to\infty}\frac 1t\log\sup_{(x^1,\dots,x^n)\in(\RR^{\ell})^n}\frac{ \EE \left[\prod_{j=1}^n u(t,x^j)\right]}{\prod_{j=1}^n( p_t*u_0)(x^j)}\le\cee_n(\gamma)\,,
		\end{align}
		and for every {$M$ such that the integral in the denominator of (\ref{est:lower}) below is positive and $M'>0$}
		\begin{align}\label{est:lower}
			\liminf_{t\to\infty}\frac 1t\log\inf_{(x^1,\dots,x^n)\in A_{M'}}\frac{ \EE \left[\prod_{j=1}^n u(t,x^j)\right]}{\int_{A_M}\prod_{j=1}^n p_t(y^j)u_0(x^j+y^j)dy }\ge\cee_n(\gamma)\,,
		\end{align}
		where $A_M$ is the set $A_M=\{(y^1,\dots,y^n)\in(\RR^\ell)^n:|y^j-y^k|\le M \text{ for all } 1\le j<k\le n \}${(i.e., a diagonal strip in $(\RR^{\ell})^n$ )} and  $A_{M'}$ is defined analogously.
	\end{theorem}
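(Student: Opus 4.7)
The plan is to combine the Feynman--Kac moment formula (Proposition \ref{prop1}) with the variational large-deviation asymptotics for exponential functionals of Brownian bridges (Lemma \ref{lem:Vupper}), working first with the smoothed kernel $\gamma_\epsilon$ from \eqref{eqn:ge} and then passing to $\epsilon\downarrow 0$ via Proposition \ref{prop:Efinite}. By Proposition \ref{prop1},
\begin{equation*}
\EE\!\left[\prod_{j=1}^n u(t,x^j)\right]=\int_{(\RR^\ell)^n}\prod_{j=1}^n p_t(y^j)u_0(x^j+y^j)\,\EE_B\!\left[\exp\!\left(\sum_{1\le j<k\le n}\int_0^t\gamma\bigl(x^j-x^k+B^{j,y^j}_s-B^{k,y^k}_s\bigr)ds\right)\right]dy,
\end{equation*}
where the $B^{j,y^j}$ are independent Brownian bridges from $0$ to $y^j$ on $[0,t]$. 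Dividing by $\prod_j(p_t*u_0)(x^j)$ turns $\prod_j p_t(y^j)u_0(x^j+y^j)dy$ into a probability measure $\nu_{t,x}(dy)$, so the ratios in \eqref{est:upper}--\eqref{est:lower} become averages under $\nu_{t,x}$ of the bridge expectation.

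For \eqref{est:upper}, I would first replace $\gamma$ by $\gamma_\epsilon$ and invoke Lemma \ref{lem:Vupper} to obtain, uniformly in $x,y\in(\RR^\ell)^n$,
\begin{equation*}
\limsup_{t\to\infty}\frac1t\log\,\EE_B\!\left[\exp\!\left(\sum_{j<k}\int_0^t\gamma_\epsilon\bigl(x^j-x^k+B^{j,y^j}_s-B^{k,y^k}_s\bigr)ds\right)\right]\le\cee_n(\gamma_\epsilon).
\end{equation*}
Since the bound is uniform in $y$, integration against $\nu_{t,x}$ preserves it, and the supremum in $x$ is immediate. Sending $\epsilon\downarrow 0$ then delivers \eqref{est:upper} once $\cee_n(\gamma_\epsilon)\to\cee_n(\gamma)$, which is read off from the spectral representation \eqref{id:ehat} together with \ref{H1}(b) (respectively \ref{H2}) by monotone convergence in the integrand against $e^{-\epsilon|\xi|^2}\mu(d\xi)$.

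For the lower bound \eqref{est:lower} I would localise the $y$-integral to the diagonal strip $A_M$, on which the bridge endpoints are within distance $M$; restricting $x\in A_{M'}$ keeps the shifts $x^j-x^k$ bounded as well. The Fourier variational description \eqref{v5a} supplies, for every $\delta>0$, a near-extremal test function $h_\epsilon\in\mathcal{A}_{n\ell}$ achieving $\cee_n(\gamma_\epsilon)-\delta$. A Cameron--Martin tilt of the joint bridge law towards the profile associated with $h_\epsilon$, combined with a Varadhan-type lower estimate of the type underlying Lemma \ref{lem:Vupper}, should yield
\begin{equation*}
\inf_{x\in A_{M'},\,y\in A_M}\EE_B\!\left[\exp\!\left(\sum_{j<k}\int_0^t\gamma_\epsilon\bigl(x^j-x^k+B^{j,y^j}_s-B^{k,y^k}_s\bigr)ds\right)\right]\ge e^{t(\cee_n(\gamma_\epsilon)-\delta-o(1))}.
\end{equation*}
Multiplying by $\prod_j p_t(y^j)u_0(x^j+y^j)$, integrating over $A_M$, and sending $t\to\infty$, then $\delta\downarrow0$, then $\epsilon\downarrow 0$ in that order produces \eqref{est:lower}.

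The main obstacle is the lower bound: one must produce a uniform-in-$x,y$ Varadhan-type estimate for the multi-bridge exponential functional when $\gamma$ is only a tempered distribution (as under \ref{H1}), and one must then control the replacement error $\gamma\leftrightarrow\gamma_\epsilon$. The uniformity demands that the extremal profile associated with $h_\epsilon$ remain effective under bounded translations by $x^j-x^k$ and under the bridge-endpoint constraints coming from $A_M$, which is precisely why the finite windows $M$ and $M'$ are built into the statement; the regularisation error is then handled spectrally through \eqref{id:ehat} and hypothesis \ref{H1}(b) or \ref{H2}.
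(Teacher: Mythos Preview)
Your high-level architecture (Feynman--Kac + regularise to $\gamma_\epsilon$ + large deviations + let $\epsilon\downarrow0$) matches the paper's, but two essential mechanisms are missing and the argument as written would not close.

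For the upper bound, Lemma \ref{lem:Vupper} does \emph{not} give a bound uniform in all $x,y$: it is stated for a fixed base point $x_0$ and drifts $|y|\le o(1)t$. More seriously, even a uniform $\limsup$ is not enough to integrate against the $t$-dependent probability measure $\nu_{t,x}$; you need a bound that holds pointwise in $t$. The paper obtains exactly this via Lemma \ref{lem:gamma}: positive definiteness of $\gamma_\epsilon$ forces
\[
\EE\exp\Big\{\int_0^t\Gamma_\epsilon\big(B_{0,t}(s)+z\big)ds\Big\}\le \EE\exp\Big\{\int_0^t\Gamma_\epsilon\big(B_{0,t}(s)\big)ds\Big\}
\]
for every $t$ and every shift $z$, which after letting $\epsilon\downarrow0$ (Proposition \ref{prop:expbridge}) lets the $y$-integral collapse to $\prod_j(p_t*u_0)(x^j)$ \emph{before} any $t\to\infty$ limit is taken. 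Your step ``first replace $\gamma$ by $\gamma_\epsilon$'' also hides a nontrivial error: one needs a H\"older splitting together with Lemma \ref{lem:ggep} to show that the $(\gamma-\gamma_\epsilon)$-part contributes nothing at exponential scale; this is packaged in Proposition \ref{prop:expgam}.

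For the lower bound, the paper does not use a Cameron--Martin tilt. Uniformity over $x\in A_{M'}$ is obtained by a finite $\delta$-partition of the range of the differences $(x^j-x^k)$, followed by the gradient estimate $|\Gamma_\epsilon(\cdot+x)-\Gamma_\epsilon(\cdot+x_h)|\le\|\nabla\Gamma_\epsilon\|_\infty\delta$ to reduce to finitely many reference points $x_h$, to each of which Lemma \ref{lem:Vupper} applies. The passage from $\gamma$ back to $\gamma_\epsilon$ again goes through H\"older and Lemma \ref{lem:ggep}; convergence $\cee_n(\gamma_\epsilon)\to\cee_n(\gamma)$ at the variational level (Proposition \ref{prop:Efinite}) is used only at the very end and does not by itself control the exponential-functional error you need along the way.
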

	{ The choice of $A_M$ and $A_{M'}$ {guarantees} that we can find the exact speed of {propagation of  high peaks} (see Theorem \ref{thm:speed} below and the proof of Lemma \ref{lem:ldev}).}
	It is  worth mentioning that the techniques in proving Theorem \ref{thm:largetimemoment} could be applied to prove the following result.
	\begin{theorem}\label{thm:u01}
		Assume conditions  \ref{H1} or \ref{H2}. Let $u$ be the solution to the stochastic heat equation \eqref{eqn:SHE} with initial condition $u_0(x)=1$ for all $x\in\RR^\ell$. Then for every integer $n\ge2$ and for every $x^1,\dots,x^n\in\RR^\ell$, we have
		\begin{align}
			\lim_{t\to\infty}\frac 1t\log \EE \left[\prod_{j=1}^n u(t,x^j)\right]=\cee_n(\gamma)\,.
		\end{align}
	\end{theorem}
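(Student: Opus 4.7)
My plan is to derive Theorem~\ref{thm:u01} as a direct corollary of Theorem~\ref{thm:largetimemoment}. With $u_0 \equiv 1$, the convolutions $p_t\ast u_0$ that appear as normalizers reduce to constants, and the whole problem collapses to verifying that the $t\to\infty$ log-growth of a certain Gaussian integral is zero.

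For the upper bound, substitute $u_0 \equiv 1$ into \eqref{est:upper}. Since $(p_t\ast u_0)(x)\equiv 1$, the uniform bound
\begin{equation*}
\limsup_{t\to\infty}\frac{1}{t}\log \sup_{(x^1,\dots,x^n)\in(\RR^\ell)^n}\EE\!\left[\prod_{j=1}^n u(t,x^j)\right] \le \cee_n(\gamma)
\end{equation*}
immediately yields the pointwise $\limsup$ bound at any fixed $n$-tuple of spatial points.

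For the lower bound, fix $(x^1,\dots,x^n) \in (\RR^\ell)^n$ and pick $M' > \max_{j<k}|x^j - x^k|$, so that $(x^1,\dots,x^n) \in A_{M'}$. For any $M>0$ (the denominator is trivially positive since $u_0 \equiv 1$), inequality \eqref{est:lower} reads
\begin{equation*}
\liminf_{t\to\infty}\frac{1}{t}\log \EE\!\left[\prod_{j=1}^n u(t,x^j)\right] \ge \cee_n(\gamma) + \liminf_{t\to\infty}\frac{1}{t}\log \int_{A_M}\prod_{j=1}^n p_t(y^j)\,dy,
\end{equation*}
so it remains to check that the last $\liminf$ equals zero.

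This final piece is a short Gaussian computation: after the substitution $y^1 = z$, $y^j = z + w^j$ ($j\ge 2$), the domain $A_M$ becomes $\RR^\ell \times B$ with $B = \{w \in \RR^{(n-1)\ell} : |w^j|\le M,\ |w^j-w^k|\le M\}$ bounded, and integrating the Gaussian in $z$ while noting that the $w$-integrand converges uniformly on $B$ to a positive constant shows that $\int_{A_M}\prod_j p_t(y^j)\,dy \sim C\, t^{-(n-1)\ell/2}$ as $t\to\infty$. Hence $\frac{1}{t}\log$ of it tends to $0$, completing the proof. No step presents a genuine obstacle; all of the analytic content has already been absorbed into Theorem~\ref{thm:largetimemoment}, and the role of the constant initial condition is precisely to strip away the inhomogeneity carried by the $p_t\ast u_0$ factors, turning the uniform upper and infimum lower bounds there into a pointwise two-sided limit.
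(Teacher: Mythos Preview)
Your proof is correct, but it takes a different route from the paper. The paper derives Theorem~\ref{thm:u01} directly from Proposition~\ref{prop:expBM}: when $u_0\equiv 1$, the Feynman--Kac formula for the moments in terms of Brownian motions (cf.~\eqref{eqn:FKn} and Proposition~\ref{prop:expbrownian}) gives
\[
\EE\Big[\prod_{j=1}^n u(t,x^j)\Big]=\EE\exp\Big\{\int_0^t\sum_{1\le j<k\le n}\gamma\big(B^j(s)-B^k(s)+x^j-x^k\big)\,ds\Big\},
\]
and Proposition~\ref{prop:expBM} identifies the limit of $\tfrac1t\log$ of the right-hand side as $\cee_n(\gamma)$ in one stroke. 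Your argument instead passes through the Brownian-bridge based Theorem~\ref{thm:largetimemoment}, which requires the additional (easy) check that $\int_{A_M}\prod_j p_t(y^j)\,dy$ decays only polynomially. The paper's path is marginally shorter and avoids introducing $A_M$, $A_{M'}$ and the change of variables; your path has the advantage of relying only on the main theorem already proved in full, rather than on Proposition~\ref{prop:expBM}, whose proof the paper leaves to the reader.
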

	This theorem is a straightforward application of Proposition \ref{prop:expBM}. 
	
 \medskip
	Inequalities \eqref{est:upper} and \eqref{est:lower} can  be used to show the  exact speed of propagation of intermittent peaks.  We recall the definition of lower and upper exponential growth indices  from \cite{CK}
	\begin{align*}
		\lambda_*(n)=\sup\left\{\alpha>0:\liminf_{t\to\infty}\frac 1t \sup_{|x|\ge \alpha t}\log\EE |u(t,x)|^n>0\right \}\,,
	\end{align*}
	\begin{align*}
		\lambda^*(n)=\inf\left\{\alpha>0:\limsup_{t\to\infty}\frac 1t \sup_{|x|\ge \alpha t}\log\EE  |u(t,x)|^n<0\right \}\,.
	\end{align*}
	Actually in \cite{CK} (and also in other works \cite{CD,CKK}), the above definitions are only for one dimension. However, the following result holds in any dimensions. 
	\begin{theorem}\label{thm:speed}  
	We assume  either  \ref{H1} or \ref{H2}.  Let $u_0$ be a nontrivial and nonnegative function  satisfying (\ref{eq1}). Then for each $n\ge2$, the following estimates hold
	\begin{equation}\label{est:llow}
	  	\lambda_*(n)\ge \sqrt{\frac{2\cee_n(\gamma)}{n}}
	\end{equation}
	and	\begin{equation}\label{est:lup}
	  	\lambda^*(n)\le \inf_{\beta}\left( \frac {\beta}{2}+\frac{\cee_n(\gamma)}{n \beta}\right)
	\end{equation}     
	where the infimum is taken over all $\beta>0$ such that $\int_{\RR^\ell} e^{\beta|y|}u_0(y)dy<\infty$.

	In addition, if $u_0$ satisfies $u_0(y)\ge Ce^{-\beta|y|}$ for some constants $\beta>0$ and $C>0$, then
	\begin{equation}\label{est:llowbeta}
		\lambda_*(n)\ge 
		\left\{
		\begin{array}{lll}
			\frac \beta2+\frac{\cee_n(\gamma)}{n \beta} & \text{ if } & \beta<\sqrt{\frac{2\cee_n(\gamma)}n} \\
			\sqrt{\frac{2\cee_n(\gamma)}n} & \text{ if } & \beta\ge\sqrt{\frac{2\cee_n(\gamma)}n}
			 \end{array}
		\right.\,.
	\end{equation}
	\end{theorem}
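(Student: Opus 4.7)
The plan is to deduce \eqref{est:llow}, \eqref{est:lup}, and \eqref{est:llowbeta} from Theorem~\ref{thm:largetimemoment} by specializing to the diagonal $x^1=\cdots=x^n=x$ (which trivially lies in any $A_{M'}$) and carefully estimating $p_t*u_0$ at the spatial scale $|x|=\al t$. The core inequality used in both directions is the completion-of-the-square bound $-r^2/(2t)\le \beta^2 t/2-\beta r$ valid for $r,\beta\ge 0$; combined with $|x-y|\ge|x|-|y|$, it provides a sharp upper envelope for $p_t(x-y)$, while a matching lower bound comes from choosing $y$ along a specific ray.

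For \eqref{est:lup}, fix $\beta>0$ with $\int_{\RR^\ell}e^{\beta|y|}u_0(y)\,dy<\infty$; the inequality above yields
\[
 (p_t*u_0)(x)\le (2\pi t)^{-\ell/2}\,e^{\beta^2 t/2-\beta|x|}\int_{\RR^\ell}e^{\beta|y|}u_0(y)\,dy.
\]
Inserting this into \eqref{est:upper} on the diagonal and taking $\sup_{|x|\ge\al t}$ gives
\[
 \limsup_{t\to\infty}\tfrac1t\log\sup_{|x|\ge\al t}\EE[u(t,x)^n]\le \cee_n(\gamma)-n\beta\al+\tfrac{n\beta^2}{2},
\]
which is strictly negative once $\al>\beta/2+\cee_n(\gamma)/(n\beta)$; optimizing over admissible $\beta$ yields \eqref{est:lup}.

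For \eqref{est:llow}, I would apply \eqref{est:lower}. Since $u_0$ is nontrivial and nonnegative, fix a ball $B(y_0,R)\subset\RR^\ell$ on which $\int u_0>0$ and set $M=2R$, so that any product $\prod_{j=1}^n B(y^*,R)$ is contained in $A_M$. Restricting the integral in \eqref{est:lower} to such a product reduces it to the $n$-th power of a single integral $I(x,y^*)=\int_{B(y^*,R)} p_t(y)\,u_0(x+y)\,dy$. Taking $y^*=y_0-x$ and noting that $p_t(y)\ge c\,t^{-\ell/2}e^{-\al^2 t/2}$ on $B(y^*,R)$ when $|x|=\al t$, one obtains
\[
 \liminf_{t\to\infty}\tfrac1t\log \EE[u(t,x)^n]\ge \cee_n(\gamma)-\tfrac{n\al^2}{2},
\]
which is positive for $\al<\sqrt{2\cee_n(\gamma)/n}$, proving \eqref{est:llow}.

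The refinement \eqref{est:llowbeta} follows by the same mechanism but with a smarter choice of centre: under $u_0(y)\ge Ce^{-\beta|y|}$, take $y^*=-\beta t\,x/|x|$ to balance the Gaussian cost of $p_t$ against the exponential decay of $u_0$. Provided $\beta\le\al$, one has $|y^*|=\beta t$ and $|x+y|\le(\al-\beta)t+R$ throughout $B(y^*,R)$, so the lower-bound rate upgrades to $\cee_n(\gamma)-n\beta\al+n\beta^2/2$, positive exactly for $\al<\beta/2+\cee_n(\gamma)/(n\beta)$. A short check shows that the admissibility constraint $\beta\le\al$ is compatible with this strict inequality precisely when $\beta<\sqrt{2\cee_n(\gamma)/n}$, giving the first branch of \eqref{est:llowbeta}; when $\beta\ge\sqrt{2\cee_n(\gamma)/n}$ the refined centre no longer improves on \eqref{est:llow} and one falls back to the second branch. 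I expect the main technical nuisance to be this case distinction together with the bookkeeping needed to keep the restricted region inside $A_M$; once these are settled, all three estimates reduce to the same elementary one-variable Gaussian optimization.
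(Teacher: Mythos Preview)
Your proposal is correct and follows the same overall strategy as the paper: deduce all three bounds from Theorem~\ref{thm:largetimemoment} on the diagonal $x^1=\cdots=x^n=x$, using completion of squares for the upper bound and a lower bound on the integral over $A_M$ for the lower bounds. The upper bound \eqref{est:lup} and the basic lower bound \eqref{est:llow} are handled essentially identically in both arguments.

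The one genuine difference is in the refined lower bound \eqref{est:llowbeta}. The paper reduces to computing the exact exponential rate of $\int_{A_M^+}\prod_j e^{-\beta|y^j|}p_t(y^j-\bar\alpha t)\,dy$ and invokes a separate technical result (Lemma~\ref{lem:ldev}), which is proved via the Laplace principle together with a somewhat delicate lower bound over the constrained region $A_M^+$. Your approach bypasses this lemma entirely: instead of integrating over all of $A_M^+$ and identifying the Laplace maximizer, you simply localize the integral to a small ball $B(y^*,R)^n$ centered at the optimal point $y^*=-\beta t\,x/|x|$, where the integrand already achieves the correct exponential rate. This is more elementary and shorter; it gives the same rate $\cee_n(\gamma)+n(\beta^2/2-\beta\alpha)$ in the range $\alpha\ge\beta$, and the case $\alpha<\beta$ is covered by falling back to \eqref{est:llow}, exactly as you describe. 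The paper's route has the advantage of yielding the precise limit (not just a liminf lower bound) of the localized integral, which feeds into the sharper statements recorded at the end of Section~\ref{sec:propagation}; your approach sacrifices that but is entirely sufficient for the theorem as stated.
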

	We discuss a few consequences of the previous theorem.
	\begin{enumerate}[leftmargin=0cm,itemindent=1cm,label=\bf{\arabic*.}]
		\item If $u_0$ is nontrivial and supported on a compact set, then
		\begin{equation}\label{id:speed1}
			\lambda^*(n)=\lambda_*(n)=\sqrt{\frac{2\cee_n(\gamma)}{n}}\,.
		\end{equation}
		Indeed, since $u_0$ has compact support, the infimum in \eqref{est:lup} can be taken over all $\beta>0$. Hence both estimates \eqref{est:llow} and \eqref{est:lup} yield the same bound. 
		\item If $u_0$ satisfies  $c_1e^{-\beta|y|}\le u_0(y)\le c_2e^{-\beta|y|} $ for some positive constants $c_1,c_2$ and $\beta$, then
		\begin{equation}\label{id:speed2}
			\lambda^*(n)=\lambda_*(n)= 
			\left\{
		\begin{array}{lll}
			\frac \beta2+\frac{\cee_n(\gamma)}{n \beta} & \text{ if } & \beta<\sqrt{\frac{2\cee_n(\gamma)}n} \\
			\sqrt{\frac{2\cee_n(\gamma)}n} & \text{ if } & \beta\ge\sqrt{\frac{2\cee_n(\gamma)}n}
			 \end{array}
		\right.\,.
		\end{equation}
		This clearly follows from \eqref{est:lup} and \eqref{est:llowbeta}. 
	\end{enumerate}

	In the case of space-time white noise, i.e. $\mu$ is the Lebesgue measure on $\RR$ and $\gamma$ is the Dirac mass at 0, $\cee_n(\delta)$ is computed explicitly in \cite{Ch15}. Namely,
	\begin{equation}\label{id:ceedelta}
		\cee_n(\delta)=\frac{n(n^2-1)}{24}\,.
	\end{equation}
	When $n=2$, we recover from \eqref{id:speed1} and \eqref{id:speed2} the result of Chen and Dalang \cite{CD}.
 

\medskip
	{For some covariances, if we tune the magnitude of the noise to be small enough, the $\mathcal {E}_n(\gamma)$ above can actually be zero. This is called phase transition and is studied in the last part of the paper.} 
To be more precise, for each fixed $\lambda>0$, let $u_ \lambda$ be the solution to the stochastic heat equation
		\begin{equation}\label{eqn:ulambda}
			 \frac {\partial u}{\partial t} =\frac12 \Delta u+\sqrt\lambda u \dot{W}\, ,
		\end{equation}
	with initial condition $u_ \lambda(0,x)=1$ for all $x\in\RR^\ell$.  We describe some notions of phase transition. The following definition is based on \cite{CK,CKK}.
	\begin{definition}\label{def:weakphase}
		For each integer $n\ge2$, we say that \textit{weak phase transition} occurs at order $n$, if there exist critical values $\overline\lambda^c_n\ge\underline\lambda^c_n>0$ such that
		\begin{equation*}
			\limsup_{t\to\infty}\frac1t \log\EE u^n_ \lambda(t,x) =0
		\end{equation*} 
		whenever $\lambda<\underline \lambda^c_n$ and
		\begin{equation*}
			\liminf_{t\to\infty}\frac1t \log\EE u^n_ \lambda(t,x) >0
		\end{equation*}
		whenever $\lambda>\overline \lambda^c_n$.
	\end{definition}
	\begin{definition}\label{def:phase}
		For each integer $n\ge2$, we say that \textit{(strong) phase transition} occurs at order $n$, if there exists a critical value $\lambda^c_n>0$ such that
		\begin{equation*}
			\limsup_{t\to\infty}\frac1t \log\EE u^n_ \lambda(t,x) =0
		\end{equation*}
		whenever $\lambda< \lambda^c_n$ and
		\begin{equation*}
			\liminf_{t\to\infty}\frac1t \log\EE u^n_ \lambda(t,x) >0
		\end{equation*}
		whenever $\lambda>\lambda^c_n$.
	\end{definition}
	Phase transition is studied as early as \cite{N}. Several stochastic processes possess phase transition property, see for instance \cite{CKK,FN}.

	{We will see that when the magnitude of certain noise is small enough, the growth index treated in Theorem \ref{thm:speed} is $0$, that is, there is no exponential high peaks propagating.} We collect a few useful observations in the following result. 
	\begin{proposition}\label{prop:phase}
		Under assumptions  \ref{H1} or \ref{H2}, the following statements hold:
		\begin{enumerate}[(i)]
		 	\item Weak phase transition at an order implies (strong) phase transition at the same order.
		 	\item If (strong) phase transition happens at an order then (strong) phase transition happens at all orders.
		 	\item If phase transition happens then
				\begin{equation}
					\lambda^c_2\le\frac{\ell (2 \pi)^\ell }{4  \sup_{s>0} \int_{\RR^\ell} se^{-s|\xi|^2}\mu(d \xi) }\,.
				\end{equation}
				In other words, the function $\lambda\mapsto\cee_2(\lambda \gamma)$ is nontrivial and intermittency always happens provided $\lambda$ is sufficiently large. 
			\item If
			\begin{equation}
				\sup_{s>0}\int_{\RR ^\ell} s e^{-s|\xi|^2}\mu(d \xi)=\infty\,,
			\end{equation}
			then there is no phase transition. 
		 \end{enumerate} 
	\end{proposition}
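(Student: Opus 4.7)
The plan is to regard $\Lambda_n(\lambda):=\cee_n(\lambda\gamma)$ as a function of $\lambda\ge0$. From the variational formula \eqref{v5}, $\Lambda_n$ is a supremum of affine functions of $\lambda$ with nonnegative slopes, so it is convex, nondecreasing and continuous on $[0,\infty)$ with $\Lambda_n(0)=0$. Consequently $\{\lambda\ge0:\Lambda_n(\lambda)=0\}$ is a closed interval $[0,\lambda_n^c]$ with $\lambda_n^c\in[0,\infty]$, and phase transition at order $n$ is exactly $\lambda_n^c\in(0,\infty)$. Theorem \ref{thm:u01} identifies $\lim_{t\to\infty}\frac{1}{t}\log\EE u_\lambda^n(t,x)=\Lambda_n(\lambda)$, so the $\limsup$ and $\liminf$ in Definitions \ref{def:weakphase}--\ref{def:phase} both coincide with $\Lambda_n(\lambda)$. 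Part (i) follows immediately: weak phase transition forces $\lambda_n^c\in[\underline\lambda_n^c,\overline\lambda_n^c]\subset(0,\infty)$, so the same $\lambda_n^c$ witnesses strong phase transition.

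For (iii) and (iv), I test the Fourier variational formula \eqref{id:ehat} with $n=2$ against the Gaussian
\begin{equation*}
h(\xi^1,\xi^2)=(2s/\pi)^{\ell/2}\exp\bigl(-s(|\xi^1|^2+|\xi^2|^2)\bigr),\qquad s>0,
\end{equation*}
which belongs to $\mathcal{A}_{2\ell}$ (real, even, unit $L^2$ norm). Explicit Gaussian integration gives $(h*h)(-\xi,\xi)=e^{-s|\xi|^2}$ and $\int_{\RR^{2\ell}}|\zeta|^2|h(\zeta)|^2\,d\zeta=\ell/(2s)$, whence
\begin{equation*}
\Lambda_2(\lambda)\ge \lambda(2\pi)^{-\ell}\int_{\RR^\ell}e^{-s|\xi|^2}\mu(d\xi)-\frac{\ell}{4s}\quad\text{for every }s>0.
\end{equation*}
Thus $\Lambda_2(\lambda)>0$ whenever $\lambda>\frac{\ell(2\pi)^\ell}{4\sup_{s>0}\int_{\RR^\ell}se^{-s|\xi|^2}\mu(d\xi)}$, which proves (iii); when the supremum is $+\infty$ this threshold is $0$, so $\Lambda_2(\lambda)>0$ for every $\lambda>0$ and $\lambda_2^c=0$, and the monotonicity below transfers this to every $n\ge2$, yielding (iv).

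For (ii), two complementary inequalities do the work. First, Lyapunov's inequality $\EE u^m\le(\EE u^n)^{m/n}$ for $m\le n$ gives $\Lambda_m(\lambda)/m\le \Lambda_n(\lambda)/n$, so $n\mapsto \Lambda_n/n$ is nondecreasing; in particular $\Lambda_n(\lambda)=0\Rightarrow \Lambda_m(\lambda)=0$ for $m\le n$ and $\Lambda_n(\lambda)>0\Rightarrow \Lambda_m(\lambda)>0$ for $m\ge n$. Second, a \emph{disintegration}: if $\Lambda_2(\lambda)=0$ then $\Lambda_m(\lambda/(m-1))=0$ for every $m\ge2$. Given $g\in\mathcal{F}_{m\ell}$ and $1\le j<k\le m$, set
\begin{equation*}
G_{jk}(y,z):=\Bigl(\int g^2(x)\prod_{l\ne j,k}dx^l\Bigr)^{1/2},
\end{equation*}
so that $\|G_{jk}\|_{L^2(\RR^{2\ell})}=1$; a Cauchy--Schwarz estimate (after the standard $g^2\to g^2+\epsilon$ regularization to avoid the zero set of $G_{jk}$) yields $\int|\nabla G_{jk}|^2\,dydz\le\int|\nabla_{(x^j,x^k)}g|^2\,dx$. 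Applying the hypothesis $\Lambda_2(\lambda)=0$ to each $G_{jk}\in\mathcal{F}_{2\ell}$ and summing over pairs,
\begin{equation*}
\lambda\sum_{1\le j<k\le m}\int\gamma(x^j-x^k)g^2(x)\,dx\le\tfrac{1}{2}\sum_{1\le j<k\le m}\int|\nabla_{(x^j,x^k)}g|^2\,dx=\tfrac{m-1}{2}\int|\nabla g|^2\,dx,
\end{equation*}
so $\Lambda_m(\lambda/(m-1))\le 0$ and hence $=0$. Combining everything: phase transition at some order $n_0$ gives $\lambda_{n_0}^c\in(0,\infty)$, so by Lyapunov $\lambda_2^c\ge\lambda_{n_0}^c>0$, which by (iii)--(iv) also forces $\lambda_2^c<\infty$; the disintegration then yields $\lambda_m^c\ge\lambda_2^c/(m-1)>0$, while $\Lambda_m\ge(m/2)\Lambda_2$ gives $\lambda_m^c\le\lambda_2^c<\infty$ for every $m\ge2$, i.e., phase transition at every order. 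The main technical obstacle is the disintegration step and the Cauchy--Schwarz bound on $\int|\nabla G_{jk}|^2$; everything else reduces to variational monotonicity and the single Gaussian optimization above.
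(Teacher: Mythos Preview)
Your proof is correct. Parts (i), (iii), (iv) match the paper's approach exactly: Theorem~\ref{thm:u01} identifies the limit with $\cee_n(\lambda\gamma)$, monotonicity in $\lambda$ follows from the variational formula, and the Gaussian test $h(\xi^1,\xi^2)=(2s/\pi)^{\ell/2}e^{-s(|\xi^1|^2+|\xi^2|^2)}$ is precisely what the paper uses.

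Part (ii) is where you genuinely diverge. The paper proves the key inequality $\cee_m(\gamma)/m\le\cee_2((m-1)\gamma)/2$ via \emph{hypercontractivity} of the Ornstein--Uhlenbeck semigroup (Lemma~\ref{lem:epeq}): writing $P_\tau u_\lambda=u_{e^{-2\tau}\lambda}$ by Mehler's formula and uniqueness, then applying $\|P_\tau f\|_q\le\|f\|_p$ with $q=1+e^{2\tau}(p-1)$. Your disintegration argument---form the two-body marginal $G_{jk}=(\int g^2\prod_{l\ne j,k}dx^l)^{1/2}$, bound $\int|\nabla G_{jk}|^2$ by Cauchy--Schwarz, apply the $n=2$ variational inequality, and sum over pairs---yields the \emph{same} inequality by purely elementary means, without any appeal to Gaussian analysis. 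This is a nice gain: the hypercontractivity route is conceptually heavier but gives the full scale of inequalities $\cee_q(\gamma)/q\le\cee_p(\tfrac{q-1}{p-1}\gamma)/p$ for all $1<p\le q$, whereas your argument is tailored to $p=2$, which is all that is needed here.

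One small technical caveat: under \ref{H1} the spatial formula \eqref{v5} is not directly available because $\gamma$ is only a distribution, so strictly speaking your marginal computation should be run for $\gamma_\epsilon$ and then pushed to the limit via Proposition~\ref{prop:Efinite}. This is routine and does not affect the argument.
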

	Because of the previous result, it makes sense to say ``transition occurs'' without specifying the order. 
	\begin{theorem}\label{thm:front1}
		Assume  condition \ref{H2}. Phase transition occurs if and only if
		\begin{equation}\label{cond:front1}
			\int_{\RR^\ell}\frac{\mu(d \xi)}{|\xi|^2}<\infty\,.
		\end{equation}
		In addition, if phase transition occurs, then for all $n\ge 2$, we have
			\begin{equation}\label{est:lcabove}
				\lambda_n^c\le  \frac{(2\pi)^\ell e}{ 2\int_{\RR^\ell}\frac{\mu(d \xi)}{|\xi|^2}}\,.
			\end{equation}
	\end{theorem}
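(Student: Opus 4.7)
The plan is to reduce the problem to the case $n=2$ and then analyze the one-particle variational quantity arising from a center-of-mass decomposition. By Proposition~\ref{prop:phase}(ii), it suffices to prove the statement for $n=2$, since (strong) phase transition at a single order is equivalent to phase transition at all orders. By Theorem~\ref{thm:u01}, $\frac{1}{t}\log \EE u_\lambda^2(t,x) \to \cee_2(\lambda\gamma)$ as $t\to\infty$ when $u_0\equiv 1$, so phase transition at order $2$ is equivalent to the existence of $\lambda^c_2>0$ such that $\cee_2(\lambda\gamma)=0$ for $\lambda<\lambda^c_2$ and $\cee_2(\lambda\gamma)>0$ for $\lambda>\lambda^c_2$.

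Next, I would perform a position-space reformulation. Using the change of variables $(x^1,x^2)\mapsto(u,v)=(x^1-x^2,(x^1+x^2)/2)$ in the variational formula~\eqref{v5} with a factorized test function $g(x^1,x^2)=\psi(u)\chi(v)$, and taking $\chi$ arbitrarily spread out so that $\int|\nabla\chi|^2\to 0$ while $\|\chi\|_{L^2}=1$, one obtains
\[
\cee_2(\lambda\gamma)=\sup_{\psi\in H^1(\RR^\ell),\,\|\psi\|_{L^2}=1}\lp\lambda\int_{\RR^\ell}\gamma(x)\psi^2(x)\,dx-\int_{\RR^\ell}|\nabla\psi(x)|^2\,dx\rp.
\]
This identifies the vanishing of $\cee_2(\lambda\gamma)$ with the absence of bound states for the Schr\"odinger operator $-\Delta-\lambda\gamma$ on $L^2(\RR^\ell)$, turning the theorem into a spectral statement about this operator.

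For the equivalence, I would handle the two directions separately. In one direction, assuming $\int\mu(d\xi)/|\xi|^2=\infty$, I would construct, for any $\lambda>0$, test functions making the supremand positive---a Birman--Schwinger-type argument: in Fourier variables, take $\hat\psi$ concentrated on the portion of $\mu$ responsible for the divergence of $\int\mu/|\xi|^2$, with appropriate weighting so that $\int|\nabla\psi|^2$ is arbitrarily small while $\int\gamma\psi^2$ stays of unit order. In the other direction and for the quantitative bound, I would use the Gaussian ansatz $\psi_t(x)=(2\pi t)^{-\ell/4}\exp(-|x|^2/(4t))$, which gives $\int\gamma\psi_t^2=(2\pi)^{-\ell}I(t/2)$ with $I(s):=\int e^{-s|\xi|^2}\mu(d\xi)$ and $\int|\nabla\psi_t|^2=\ell/(4t)$. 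Positivity of the bracket for some $t$ amounts to $\lambda\sup_s sI(s)>(2\pi)^\ell\ell/8$; combining with the pointwise estimate $se^{-s|\xi|^2}\le 1/(e|\xi|^2)$ yields $\sup_s sI(s)\le e^{-1}\int\mu/|\xi|^2$, from which the claimed upper bound on $\lambda^c_n$ follows after refinement (via a sharper, non-Gaussian choice of test function or finer optimization saturating the pointwise bound).

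The main obstacle is the construction of bound states in the ``only if'' direction: for arbitrary $\lambda>0$, the naive Gaussian ansatz only detects $\cee_2(\lambda\gamma)>0$ above the threshold provided by Proposition~\ref{prop:phase}(iii), so a more delicate scale-dependent construction in Fourier space is needed, especially in low dimensions where classical Schr\"odinger theory guarantees bound states but explicit approximate eigenfunctions are harder to produce. The approximation $\gamma_\epsilon$ from~\eqref{eqn:ge} supports a limiting argument to handle singular $\mu$ and transfer the variational estimates to the limit.
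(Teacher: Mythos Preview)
Your reduction to $n=2$ via Proposition~\ref{prop:phase} and the identification with $\cee_2(\lambda\gamma)$ through Theorem~\ref{thm:u01} are correct and match the paper's structure. The center-of-mass reduction to a one-body Schr\"odinger problem is a genuinely different route from the paper, which instead imports the equivalence from \cite{CKK} (a probabilistic/renewal argument) and extracts the quantitative bound~\eqref{est:lcabove} from the function $h_1(r,y)=(2\pi)^{-\ell}\int_0^r\int_{\RR^\ell}e^{-s|\xi|^2/2}e^{-4|y|^2/s}\mu(d\xi)\,ds$ appearing there, taking $r\to\infty$, $y\to 0$.

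However, your proposal has two genuine gaps. First, and most seriously, you never address the direction ``$\int\mu(d\xi)/|\xi|^2<\infty \Rightarrow \cee_2(\lambda\gamma)=0$ for small $\lambda$''. Producing test functions only gives \emph{lower} bounds on $\cee_2$; establishing that the supremum is actually zero requires an \emph{upper} bound on the quadratic form over all of $H^1$. In your Schr\"odinger picture this is the statement that $-\Delta-\lambda\gamma$ has no negative spectrum for small $\lambda$, which is far from automatic (indeed in low dimensions arbitrarily weak nonnegative potentials typically bind), and your proposal contains no mechanism for it.

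Second, the quantitative bound cannot be obtained from your Gaussian ansatz together with $se^{-s|\xi|^2}\le (e|\xi|^2)^{-1}$. That pointwise estimate gives $\sup_s sI(s)\le e^{-1}\int\mu/|\xi|^2$, hence only
\[
\lambda_2^c\;\le\;\frac{(2\pi)^\ell\ell}{4\sup_s sI(s)}\;\ge\;\frac{(2\pi)^\ell\ell e}{4\int\mu/|\xi|^2},
\]
and the inequality points the wrong way to conclude \eqref{est:lcabove}. Your ``refinement'' is left entirely unspecified; the sharp constant in the paper comes not from a Gaussian trial state but from the probabilistic estimate in \cite{CKK}, which yields $\cee_2(\lambda\gamma)>0$ whenever $\lambda\ge e/h_1(r,a)$ and then optimizes in $r,a$.
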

	Under hypothesis \ref{H1}, the picture is less complete. 
	\begin{theorem}\label{thm:front2}
		Assuming  condition \ref{H1}, if 
		\begin{equation}
			\int_\RR \frac{f(\xi)+f^2(\xi)}{|\xi|^2}d \xi<\infty\,,
		\end{equation}
		then phase transition happens. 
	\end{theorem}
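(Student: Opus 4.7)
By Proposition~\ref{prop:phase}(ii) it suffices to establish strong phase transition at the single order $n=2$; by Proposition~\ref{prop:phase}(iii) the critical value is automatically finite once it exists. Because $\cee_{2}(\lambda\gamma)\ge 0$ in general (Proposition~\ref{prop:Efinite}), the task reduces to producing $\lambda_{0}>0$ such that $\cee_{2}(\lambda\gamma)\le 0$ for every $\lambda\in(0,\lambda_{0})$. The plan is to prove this first for the regularized covariance $\gamma_{\epsilon}$ from \eqref{eqn:ge}, uniformly in $\epsilon>0$, and then pass $\epsilon\downarrow 0$ as in the other regularization arguments of the paper.

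The key analytic step will be the Hardy-type inequality
\begin{equation*}
\int_{\RR}f(\xi)(h*h)(-\xi,\xi)\,d\xi\ \le\ C\int_{\RR^{2}}|\eta|^{2}|h(\eta)|^{2}\,d\eta,\qquad h\in\mathcal{A}_{2},
\end{equation*}
with $C=C(\kappa,J_{1},J_{2})$ depending only on $\kappa$ from \ref{H1}(a) and on $J_{k}:=\int_{\RR}f^{k}(\xi)/\xi^{2}\,d\xi$ for $k=1,2$, both finite by hypothesis. Since $f(\xi)e^{-\epsilon\xi^{2}}\le f(\xi)$, the Fourier representation \eqref{v5a} will give, for every $h\in\mathcal{A}_{2}$, $\cee_{2}(\lambda\gamma_{\epsilon})\le\bigl(\tfrac{\lambda C}{2\pi}-\tfrac12\bigr)\int_{\RR^{2}}|\eta|^{2}|h(\eta)|^{2}d\eta$; for $\lambda<\pi/C$ the prefactor is negative, so $\cee_{2}(\lambda\gamma_{\epsilon})\le 0$, uniformly in $\epsilon$. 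Passing $\epsilon\downarrow 0$ yields $\cee_{2}(\lambda\gamma)=0$ for every such $\lambda$, which is strong phase transition at order $2$.

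To establish the Hardy inequality one expands
$(h*h)(-\xi,\xi)=\int_{\RR^{2}}h(\zeta_{1},\zeta_{2})\,h(-\xi-\zeta_{1},\xi-\zeta_{2})\,d\zeta_{1}d\zeta_{2}$
and applies \ref{H1}(a) in the form $f(\xi)=f((\xi-\zeta_{2})+\zeta_{2})\le\kappa(f(\xi-\zeta_{2})+f(\zeta_{2}))$, distributing the two pieces of $f(\xi)$ onto the respective arguments of the two copies of $h$. A Cauchy--Schwarz step then pairs each surviving factor of the shape $f(\alpha)^{1/2}/|\alpha|$ (integrated against $d\alpha$ so as to produce $J_{1}^{1/2}$ or $J_{2}^{1/2}$) with $|\alpha|\,|h(\ldots,\alpha,\ldots)|$, whose $L^{2}$-norm is absorbed into the Dirichlet energy $\int_{\RR^{2}}|\eta|^{2}|h(\eta)|^{2}d\eta$ on the right-hand side.

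The main obstacle is the combinatorial arrangement of \ref{H1}(a) and Cauchy--Schwarz: the splittings must be organized so that after integration every $f$-variable carries an $f/|\cdot|^{2}$ or $f^{2}/|\cdot|^{2}$ weight (never the divergent $1/|\cdot|^{2}$, and never the uncontrolled $f^{3}/|\cdot|^{2}$). The hypothesis $J_{2}<\infty$ is precisely what handles the terms in which two $f$-factors collide on the same variable after Cauchy--Schwarz, while $J_{1}<\infty$ covers the remaining single-$f$ contributions.
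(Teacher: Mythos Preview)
Your overall reduction is correct: by Proposition~\ref{prop:phase} it is enough to show $\cee_{2}(\lambda\gamma)=0$ for all small $\lambda$, and via \eqref{y23} together with Proposition~\ref{prop:Efinite} this is \emph{equivalent} to the Hardy-type inequality you state. So the target inequality is true; the issue is the proof you sketch for it.

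The paper does not attack the variational quantity at all. It works instead through the Wiener chaos expansion of $u_{\lambda}$: with $u_{0}\equiv 1$ each chaos norm factorises after the change of variables $\eta^{j}=\xi^{1}+\cdots+\xi^{j}$, and \ref{H1}(a) gives $\prod_{j}f(\eta^{j}-\eta^{j-1})\le \kappa^{n}\prod_{j}(f(\eta^{j})+f^{2}(\eta^{j}))$. Integrating $e^{-s|\eta^{j}|^{2}}$ over $s\in(0,\infty)$ produces exactly $\int(f+f^{2})/\eta^{2}\,d\eta$, so the whole second moment is dominated by a geometric series in $\lambda$ uniformly in $t$; hence $\cee_{2}(\lambda\gamma)=0$ for small $\lambda$. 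The mechanism is that the chaos expansion generates \emph{many} factors of $f$, and \ref{H1}(a) is applied term-by-term along the chain $\eta^{j}-\eta^{j-1}$, never needing a single bilinear Hardy estimate.

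Your route, by contrast, needs the single inequality $\int f(\xi)(h*h)(-\xi,\xi)\,d\xi\le C\int|\eta|^{2}|h|^{2}$, and here your sketch has a genuine gap. After one application of \ref{H1}(a) and the $\zeta\leftrightarrow\alpha$ symmetry you are reduced to bounding
\[
\int_{\RR}\int_{\RR^{2}} f(\zeta_{2})\,h(\zeta_{1},\zeta_{2})\,h(-\xi-\zeta_{1},\xi-\zeta_{2})\,d\zeta\,d\xi,
\]
and no placement of Cauchy--Schwarz of the type you describe closes this. If one CS's in $\zeta_{2}$ with weight $f/|\cdot|$ versus $|\cdot|\,h$, a factor $f^{1/2}$ survives on the second copy of $h$; if one CS's in all three variables, the leftover integral over $\xi$ diverges; and if one first integrates out $\xi$, one is evaluating $h$ along one-dimensional anti-diagonals of $\RR^{2}$, which is not controlled by $\|h\|_{L^{2}}$ or by the Dirichlet energy. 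The equivalent one-dimensional form of your inequality, $\int f(\xi)\,\widehat{\phi^{2}}(\xi)\,d\xi\le C\int\xi^{2}|\hat\phi|^{2}$, makes the obstruction transparent: one application of \ref{H1}(a) factorises the left side as $2\kappa\bigl(\int f|\hat\phi|\bigr)\,\|\hat\phi\|_{1}$, and $\|\hat\phi\|_{1}$ is simply not dominated by the homogeneous $\dot H^{1}$-norm. Your final paragraph names this ``combinatorial obstacle'' but does not resolve it; as written the argument is incomplete. The chaos-expansion proof in the paper sidesteps the difficulty entirely because it never needs the sharp bilinear form of the estimate.
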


	The paper is organized as follows. Section \ref{sec:preliminaries} quickly recalls some elements of stochastic calculus. Section \ref{sec:existence} discusses existence and uniqueness of the solution to \eqref{eqn:SHE}. In Section \ref{sec:FKformula}, we obtain a Feynman-Kac formula for the moments of the solution, which is based on Brownian bridges. In Section \ref{sec:largedev}, we study large time asymptotics of exponential functionals of Brownian bridges and Brownian motions. The proof of Theorem \ref{thm:largetimemoment} is provided in Section \ref{sec:proof12}. Section \ref{sec:propagation} discusses results about exponential growth indices. In particular, a proof of Theorem \ref{thm:speed} is given. Section \ref{sec:phase} discusses results about phase transition. We present in that section the proofs of Proposition \ref{prop:phase}, Theorem \ref{thm:front1} and Theorem \ref{thm:front2}. Section \ref{sec:app} is an appendix which contains some technical results.

\section{Preliminaries}
\label{sec:preliminaries}

  The space of   Schwartz functions is
denoted by $\mathcal{S}(\RR^\ell)$.   The Fourier
transform of a function $u \in \mathcal{S}(\RR^\ell)$ is defined with the normalization
\[ \mathcal{F}u ( \xi)  = \int_{\mathbb{R}^\ell} e^{- i
   \xi \cdot x } u ( x) d x, \]
so that the inverse Fourier transform is given by $\mathcal{F}^{- 1} u ( \xi)
= ( 2 \pi)^{- \ell} \mathcal{F}u ( - \xi)$.


\subsection{Stochastic integration with respect to $W$}
We can interpret  $W$   as a Brownian motion with values in an infinite dimensional Hilbert space. In this context,  the stochastic integration theory with respect to $W$ can be handled by classical theories.
We briefly recall the main   features of this theory.

We denote by  $\HH_0$ the Hilbert space defined as the closure of  $\mathcal{S}(\RR^\ell)$ under the inner product
  \begin{equation}\label{HH0}
  \langle g, h \rangle_{ \HH_0}=\frac 1{(2\pi)^\ell}  \int_{\RR^\ell}\mathcal{F}g(\xi)\overline{\mathcal{F}h(\xi)} \mu(d\xi).
  \end{equation}
  Then the Gaussian family $W$ can be extended to an  {\it isonormal Gaussian} process $\{W(\phi), \phi \in L^2(\RR_+, \HH_0)\}$ parametrized by the Hilbert space $\HH:=L^2(\RR_+, \HH_0)$.
For any $t\ge0$, let $\mathcal{F}_{t}$ be the $\sigma$-algebra generated by $W$ up to time $t$. An elementary process $g$ is an adapted  step process with values in $\HH_0$  given by
\begin{equation*}
g(s)
=
\sum_{i=1}^{n} \sum_{j=1}^m X_{i,j} \, \mathbf{1}_{(a_{i},b_{i}]}(s) \,  \phi_j,
\end{equation*}
where $n$ and $m$ are finite positive integers, $0 \le a_{1}<b_{1}<\cdots<a_{n}<b_{n}<\infty$, $\phi_j\in \HH_0$    and $X_{i,j}$ are $\cf_{a_{i}}$-measurable random variables for $i=1,\ldots,n$, $j=1\dots, m$. The integral of  such a process with respect to $W$ is defined as
\[
\int_0^\infty\int_{\mathbb{R}^\ell}g(s,x) \, W(ds,dx)
=\sum_{i=1}^{n} \sum_{j=1}^m X_{i,j} \, W\lp  \mathbf{1}_{(a_{i},b_{i}]} \otimes \phi_j \rp.
\]
We use here the notation $g(s,x)$, although $g(s, \cdot)$ is not necessarily a function, but an element of the Hilbert space $\HH_0$.
Let $\laa$ be the space of  $\HH_0$-valued predictable processes $g$  such that
 $\EE\|g\|_{\HH}^{2}<\infty$. Then, it is not difficult so  show that 
the space of elementary processes  is dense in $\laa$, and for $g\in\laa$, the stochastic integral $\int_0^\infty\int_{\mathbb{R}^\ell}g(s,x) \, W(ds,dx)$ is defined as the $L^{2}(\oom)$-limit of Riemann sums along elementary processes approximating $g$. Moreover, we have
\begin{equation}\label{int isometry}
\EE  \lp \int_0^\infty\int_{\mathbb{R}^\ell}g(s,x) \, W(ds,dx) \rp^{2} 
=
\EE  \|g\|_{\HH}^{2}.
\end{equation}

\subsection{Elements of Malliavin calculus}

We recall   that the Gaussian family $W$ can be extended to $\HH$ and this produces an isonormal Gaussian process. We refer to~\cite{Nua}
for a detailed account of the Malliavin calculus with respect to 
Gaussian processes.  The  smooth and cylindrical
random variables $F$ are of the form
\begin{equation*}
F=f(W(\phi_1),\dots,W(\phi_n))\,,
\end{equation*}
with $\phi_i \in \HH$, $f \in C^{\infty}_p (\RR^n)$ (namely,  $f$ and all
its partial derivatives have polynomial growth). For this kind of random variable, the derivative operator $D$ in the sense of Malliavin calculus is the
$\HH$-valued random variable defined by
\begin{equation*}
DF=\sum_{j=1}^n\frac{\partial f}{\partial
x_j}(W(\phi_1),\dots,W(\phi_n))\phi_j\,.
\end{equation*}
The operator $D$ is closable from $L^2(\Omega)$ into $L^2(\Omega;
\HH)$  and we define the Sobolev space $\mathbb{D}^{1,2}$ as
the closure of the space of smooth and cylindrical random variables
under the norm
\[
\|DF\|_{1,2}=\sqrt{\EE F^2+\EE \|DF\|^2_{\HH}  }\,.
\]
We denote by $\delta$ the adjoint of the derivative operator (or divergence) given
by the duality formula
\begin{equation}\label{dual}
\EE  \lc \delta (u)F \rc =\EE  \lc \langle DF,u
\rangle_{\HH}\rc ,
\end{equation}
for any $F \in \mathbb{D}^{1,2}$ and any element $u \in L^2(\Omega;
\HH)$ in the domain of $\delta$.
As in the one-dimensional case, it can be proved that the space $\laa$ is included in the domain of $\delta$ and for any $u\in \delta$, $\delta(u)$ coincides with the stochastic integral defined above, that is,
\[
\delta(u)=\int_0^\infty\int_{\mathbb{R}^\ell}u(s,x) \, W(ds,dx).
\]

For any integer $n\ge 0$ we denote by $\mathbf{H}_n$ the $n$th Wiener chaos of $W$. We recall that $\mathbf{H}_0$ is simply  $\RR$ and for $n\ge 1$, $\mathbf {H}_n$ is the closed linear subspace of $L^2(\Omega)$ generated by the random variables $\{ H_n(W(\phi)),\phi \in \HH, \|\phi\|_{\HH}=1 \}$, where $H_n$ is the $n$th Hermite polynomial.
For any $n\ge 1$, we denote by $\HH^{\otimes n}$ (resp. $\HH^{\odot n}$) the $n$th tensor product (resp. the $n$th  symmetric tensor product) of $\HH$. Then, the mapping $I_n(\phi^{\otimes n})= H_n(W(\phi))$ can be extended to a linear isometry between    $\HH^{\odot n}$ (equipped with the modified norm $\sqrt{n!}\| \cdot\|_{\HH^{\otimes n}}$) and $\mathbf{H}_n$.

Consider now a random variable $F\in L^2(\Omega)$ which is measurable with respect to the $\sigma$-field  $\cf$ generated by $W$. This random variable can be expressed as
\begin{equation}\label{eq:chaos-dcp}
F= \EE  F + \sum_{n=1} ^\infty I_n(f_n),
\end{equation}
where the series converges in $L^2(\Omega)$, and the elements $f_n \in \HH ^{\odot n}$, $n\ge 1$, are determined by $F$.  This identity is called the Wiener-chaos expansion of $F$.

The Skorohod integral (or divergence) of a random field $u$ can be
computed by  using the Wiener chaos expansion. More precisely,
suppose that $u=\{u(t), t\ge 0\}$ is an $\HH_0$-valued   adapted process such that for all $t\ge 0$,
$\EE \|u(t)\|^2_{\HH_0}<\infty$. 
Then, for each $t\ge 0$, the $\HH_0$-valued random variable $u(t)$ has a Wiener chaos expansion of the form
\begin{equation}  \label{exp1}
u(t)= \EE \lc u(t) \rc + \sum_{n=1}^\infty I_n (f_n(t)),
\end{equation}
where the kernels $f_n(t)$
in the expansion (\ref{exp1}) are  symmetric functions in $\HH
^{\otimes n}$. In
this situation, $u$ belongs to  $\laa$, which is equivalent to say that $u$ belongs to the domain of the divergence operator,   if and only if
the following series converges in $L^2(\Omega)$
\begin{equation}\label{eq:delta-u-chaos}
\delta(u)= \int_0 ^\infty \int_{\RR^\ell}  u(t,x) \, d W(t,x)
= W(\EE u) + \sum_{n=1}^\infty I_{n+1} (\widetilde{f}_n(\cdot,t)),
\end{equation}
where $\widetilde{f}_n$ denotes the symmetrization of $f_n$ in all its $n+1$ variables.  

\subsection{Brownian bridges} 
\label{sub:brownian_bridges}
	Throughout the paper, we denote by $B_{a,b}=\{ B_{a,b}(s),s\in[a,b]\}$ a Brownian bridge in $\RR^\ell$ which starts and ends at 0. More precisely,    $B_{a,b}(s)=(B_{a,b}^1(s),\dots,B_{a,b}^\ell(s))$ where $\{B_{a,b}^j(s),s\in[a,b]\}$, $j=1,\dots,\ell$, are independent centered Gaussian processes in $\RR$ with covariance function
	\begin{equation*}
		\EE[B_{a,b}^j(s)B_{a,b}^j(r)]=(r-a)(1-\frac{s-a}{b-a})
	\end{equation*}
	for all $a\le r\le s\le b$.
	\begin{lemma}\label{lem:BB decomp}
		Consider a Brownian bridge $B_{a,b}$ in $\RR^\ell$ over the time interval $[a,b]$ such that $B_{a,b}(a)=B_{a,b}(b) =0$. For every fixed $c\in[a,b]$, we have the following decomposition 
		 \begin{equation}\label{eqn:bbdecomp}
		 B_{a,b}(s) = \frac{b-s}{b-c}B_{a,b}(c) + \widetilde{B}_{c,b}(s)
		 \end{equation}
		 for all $s\in[c,b]$, where $ \widetilde{B}_{c,b}$ is another Brownian bridge over $[c,b]$ independent of $\{B_{a,b}(s),s\in[a,c]\}$.
 	\end{lemma}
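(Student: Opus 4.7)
Since the coordinates of $B_{a,b}$ are independent scalar Brownian bridges, the statement reduces immediately to the case $\ell=1$, and I would treat that from the start. The natural move is to \emph{define}
\[
\widetilde B_{c,b}(s) := B_{a,b}(s) - \frac{b-s}{b-c}\,B_{a,b}(c), \qquad s\in[c,b],
\]
so that \eqref{eqn:bbdecomp} holds by construction, and then verify the two assertions: (i) $\widetilde B_{c,b}$ is a Brownian bridge on $[c,b]$, and (ii) $\widetilde B_{c,b}$ is independent of $\{B_{a,b}(t): t\in[a,c]\}$. Since every object in sight is a linear combination of coordinates of a centered Gaussian process, both claims will reduce to covariance computations, and independence will follow from uncorrelation.

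For (i), I would first check the boundary values: plugging $s=c$ gives $\widetilde B_{c,b}(c)=B_{a,b}(c)-B_{a,b}(c)=0$, and $s=b$ gives $\widetilde B_{c,b}(b)=B_{a,b}(b)=0$. The process is centered Gaussian by construction, so it remains to compute $\EE[\widetilde B_{c,b}(s)\widetilde B_{c,b}(r)]$ for $c\le r\le s\le b$. Using the defining covariance
\[
K(s,r)=(r\wedge s-a)\Bigl(1-\tfrac{s\vee r-a}{b-a}\Bigr)
\]
of $B_{a,b}$, expanding the product of definitions yields four terms involving $K(s,r)$, $K(s,c)$, $K(r,c)$, and $K(c,c)$. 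A short algebraic simplification (the key identity is $(r-a)(b-c)-(b-r)(c-a)=(b-a)(r-c)$) collapses these to $(r-c)\bigl(1-\tfrac{s-c}{b-c}\bigr)$, which is exactly the Brownian bridge covariance on $[c,b]$.

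For (ii), because $(\widetilde B_{c,b}(s),B_{a,b}(t))$ is a centered Gaussian vector for every $s\in[c,b]$ and $t\in[a,c]$, independence of the two processes is equivalent to showing $\EE[\widetilde B_{c,b}(s)B_{a,b}(t)]=0$ for all such $s,t$. For $t\le c\le s$, one has $K(s,t)=(t-a)(b-s)/(b-a)$ and $K(c,t)=(t-a)(b-c)/(b-a)$, so
\[
\EE[\widetilde B_{c,b}(s)B_{a,b}(t)] = \tfrac{(t-a)(b-s)}{b-a} - \tfrac{b-s}{b-c}\cdot\tfrac{(t-a)(b-c)}{b-a} = 0,
\]
finishing the argument.

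The proof is essentially bookkeeping with Gaussian covariances; there is no real obstacle. The only point requiring mild care is to recognize that independence in this Gaussian setup is equivalent to vanishing cross-covariance, and that it suffices to test this for pairs of time instants since the joint law of the two processes is determined by the two-point correlations.
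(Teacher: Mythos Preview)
Your proposal is correct and follows exactly the same approach as the paper: define $\widetilde B_{c,b}(s):=B_{a,b}(s)-\tfrac{b-s}{b-c}B_{a,b}(c)$, verify by covariance computation that it is a Brownian bridge on $[c,b]$, and check that its cross-covariance with $B_{a,b}(r)$ for $r\in[a,c]$ vanishes, whence Gaussianity gives independence. The paper simply labels these steps ``direct calculations,'' while you have written them out.
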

 \begin{proof}
 Let $a \leq r \leq c$ and $c \leq s \leq b$. Define $\widetilde{B}_{c,b}(s) := B_{a,b}(s) - \frac{b-s}{b-c}B_{a,b}(c)$. Then direct calculations show that $\{\widetilde B(s),s\in[c,b]\}$ has the law of a Brownian bridge over $[c,b]$ and $\EE [\widetilde{B}_{c,b}(s) B_{a,b}(r)]= 0$. These facts imply the result. 
 \end{proof}


 For every $t>0$ and fixed $x,y\in\RR^\ell$, the process $Z= \{B_{0,t}(s)+\frac st y+(1-\frac st)x; 0\le s\le t \} $ is a Brownian bridge which starts at $x$ at time $0$ and arrives at $y$ at time $t$. Furthermore, there exists a Brownian motion $\{ B(s);s\ge0\}$ starting at 0 so that $Z$ is a solution of the stochastic differential equation
 \begin{equation*}
 	dZ(s)=dB(s)+\frac{y-Z(s)}{t-s}ds\quad\mbox{ for } 0\le s<t
 \end{equation*}
 with boundary values $Z(0)=x$ and $Z(t)=y$.  {Using}  Girsanov theorem, it can be shown that away from the terminal time $t$, the law of $Z$ admits a density with respect to $B$. More precisely, for every $\lambda\in(0,1)$ and every bounded 
{measurable} function $F$ on  
{$C([0,\lambda t]; \mathbb{R}^d)$} we have
 \begin{multline}\label{id:density}
	\EE \lt[F(\{Z(s);0\le s\le \lambda t\}) \rt]
		\\=(1- \lambda)^{-\frac\ell2} \EE \lt[\exp\left\{\frac{|y-x|^2}{2t}-\frac{|y-x-B(\lambda t)|^2}{2t(1- \lambda)} \right\}F(\{x+ B(s);0\le s\le \lambda t\})\rt].
 \end{multline}
 For a proof of this result, we refer to \cite {Nakao}*{Lemma 3.1}.

\section{Existence and  uniqueness of a solution}\label{sec:existence}
We state the definition of the solution to equation  (\ref{eqn:SHE}) using the stochastic integral introduced in the previous section.  

\begin{definition}\label{def-sol-sigma}
Let $u=\{u(t,x),  t\ge 0, x \in \mathbb{R}^\ell\}$ be a real-valued predictable stochastic process  such that for all $t \ge 0$ and $x\in\RR^\ell$ the process $\{p_{t-s}(x-y)u(s,y) \mathbf{1}_{[0,t]}(s), 0 \leq s \leq t, y \in \mathbb{R}^\ell\}$ is an element of $\laa$.  Assume that $u_0$ satisfies  (\ref{eq1}). We say that $u$ is a mild solution of  (\ref{eqn:SHE}) if for all $t \in [0,T]$ and $x\in \mathbb{R}^\ell$ we have
\begin{equation}\label{eq:mild-formulation sigma}
u(t,x)=(p_t * u_0)(x) + \int_0^t \int_{\mathbb{R}^\ell}p_{t-s}(x-y)u(s,y) W(ds,dy) \quad a.s.
\end{equation}
\end{definition}

\begin{theorem}  \label{thm1}
Let $u_0$ be a function satisfying condition (\ref{eq1}).   Suppose that the spectral  measure $\mu$ satisfies  hypotheses \ref{H1} or \ref{H2}.    Then there exists a unique solution to equation (\ref{eqn:SHE}).
\end{theorem}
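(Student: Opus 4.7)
The plan is to construct the unique solution via its Wiener chaos expansion and then reduce the theorem to the convergence of an explicit series of Hilbert-space norms. First, if $u(t,x)$ is any mild solution with $\EE u(t,x)^2<\infty$, the isometry \eqref{int isometry} together with repeated iteration of the mild formulation \eqref{eq:mild-formulation sigma} forces the chaos kernels of $u(t,x)$ to be uniquely determined: $f_0(t,x)=(p_t*u_0)(x)$ and, for $n\ge 1$,
\begin{equation*}
f_n(s_1,y_1,\dots,s_n,y_n;t,x)=\frac{1}{n!}\,p_{t-s_{\sigma(n)}}(x-y_{\sigma(n)})\prod_{i=2}^{n}p_{s_{\sigma(i)}-s_{\sigma(i-1)}}(y_{\sigma(i)}-y_{\sigma(i-1)})\,(p_{s_{\sigma(1)}}*u_0)(y_{\sigma(1)}),
\end{equation*}
where $\sigma$ denotes the permutation ordering $s_1,\dots,s_n$ increasingly. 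Existence and uniqueness both reduce to showing $\sum_{n\ge 0}n!\|f_n(\cdot;t,x)\|_{\HH^{\otimes n}}^2<\infty$ for every $(t,x)$.

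The next step is to compute this norm by restricting to the simplex $0<s_1<\cdots<s_n<t$ (which absorbs the factor $1/n!$) and applying Plancherel in the spatial variables. Up to harmless constants, one obtains an integral of the form
\begin{equation*}
\int_{0<s_1<\cdots<s_n<t}\int_{(\RR^\ell)^n}\lln\cff(p_{s_1}*u_0)(\eta_1+\cdots+\eta_n)\rrn^2\prod_{i=1}^{n}e^{-(s_{i+1}-s_i)|\eta_i|^2}\mu(d\eta_i)\,ds,
\end{equation*}
with the convention $s_{n+1}=t$. Condition \eqref{eq1} guarantees that $p_{s_1}*u_0$ is finite and controlled uniformly for $(s_1,x)$ in compact sets, so the remaining question is whether the Fourier-Gaussian sum converges.

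Under hypothesis \ref{H2}, this is classical Dalang theory: Cauchy--Schwarz applied successively in the time increments reduces the $n$-fold integral to products of $\int_{\RR^\ell}e^{-s|\xi|^2}\mu(d\xi)$, and \eqref{k5} yields a bound of the shape $n!\|f_n\|_{\HH^{\otimes n}}^2\le C(t,x)(Ct)^n/n!$, so the series converges. Under hypothesis \ref{H1}, I would use the subadditivity \eqref{eq:H11} iteratively after the change of variables $\zeta_i=\eta_1+\cdots+\eta_i$ to dominate $\prod_{i=1}^n f(\zeta_i-\zeta_{i-1})$ by a combinatorial sum of products of the single-variable factors $f(\zeta_j)$; each resulting scalar integral is then controlled by \eqref{eq:H12} together with the Gaussian heat weights, producing the required $1/n!$ decay after integration over the time simplex.

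The main obstacle is the analysis under \ref{H1}, where $\gamma$ is only a distribution and the classical nonnegativity-based Dalang argument fails: one must invoke both the subadditivity \eqref{eq:H11} and the quadratic integrability \eqref{eq:H12} simultaneously, and must organize the iteration so that the combinatorial proliferation from applying \eqref{eq:H11} $n$ times (producing at most $(2\kappa)^n$ summands) is still absorbed by the simplex factor $1/n!$. A secondary subtlety is that $u_0$ is merely a tempered measure satisfying \eqref{eq1}, so one cannot separate the initial datum from the stochastic heat kernel too crudely; the smoothing provided by $p_{s_1}$ must remain coupled with $u_0$ throughout the estimate.
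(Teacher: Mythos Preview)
Your overall architecture --- chaos expansion, Plancherel, reduction to summability of $n!\|f_n\|^2_{\HH^{\otimes n}}$, and then the subadditivity/quadratic-integrability argument for \ref{H1} --- is exactly the paper's route, and your treatment of the purely spectral part (the final series bound) is essentially correct and matches the content of Lemma~\ref{Lemma:Conv chaos}.

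The gap is in the handling of the initial datum, which you yourself flag as a ``secondary subtlety'' but do not actually resolve. Your displayed Plancherel identity is not correct: the factor $\cff(p_{s_1}*u_0)$ does not separate multiplicatively at a single frequency $\eta_1+\cdots+\eta_n$. The honest Fourier computation keeps an inner integral over a frequency variable $\zeta$ coupled to $u_0$, namely
\[
\Big|\int_{\RR^\ell} e^{ix\cdot\zeta}\,\cff u_0(\zeta)\,e^{-\frac{s_1}{2}|\zeta|^2}\prod_{j=1}^n e^{-\frac{s_{j+1}-s_j}{2}\,|\xi^1+\cdots+\xi^j-\zeta|^2}\,d\zeta\Big|^2,
\]
so the $\zeta$ is entangled with every partial sum of the $\xi$'s. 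Consequently, the sentence ``condition~\eqref{eq1} guarantees that $p_{s_1}*u_0$ is finite and controlled uniformly for $(s_1,x)$ in compact sets'' does not bound anything here: for a tempered measure $u_0$ (e.g.\ $u_0\equiv 1$), $\cff(p_{s_1}*u_0)$ is not even a bounded function, and pointwise control of $p_{s_1}*u_0$ says nothing about its Fourier side.

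What is needed --- and what the paper does --- is to expand the exponent in $\zeta$, complete the square, and then undo the Fourier transform in $\zeta$ via Plancherel to return to physical space. This produces two effects: first, the initial-data contribution reappears as $(p_t*|u_0|)(x)^2$, which is exactly the quantity controlled by \eqref{eq1}; second, an extra \emph{positive} term
\[
\exp\!\Big(\tfrac1t\Big|\sum_{j=1}^n(s_{j+1}-s_j)(\xi^1+\cdots+\xi^j)\Big|^2\Big)
\]
shows up in the integrand and must be carried into the subsequent series estimate (this is why Lemma~\ref{Lemma:Conv chaos} is stated with that specific exponent). Your sketch jumps over this step entirely, so as written it only covers the case $u_0\equiv 1$ and does not establish the theorem for general initial data satisfying \eqref{eq1}.
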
	

\begin{proof}
It can be proved that a solution $u(t,x)$  admits the chaos expansion 
\begin{equation}  \label{k6}
 u(t,x) = \sum_{n=0}^{\infty} I_n(f_n(\cdot,t,x))\,,
  \end{equation}
  where 
  \begin{multline}\label{eq:expression-fn}
f_n(s_1,x^1,\dots,s_n,x^n,t,x)\\
=\frac{1}{n!}p_{t-s_{\si(n)}}(x-x^{\si(n)})\cdots p_{s_{\si(2)}-s_{\si(1)}}(x^{\si(2)}-x^{\si(1)})
(p_{s_{\si(1)}}*u_0)(x^{\si(1)})\,,
\end{multline}
and  $\si$ denotes the permutation of $\{1,2,\dots,n\}$ such that $0<s_{\si(1)}<\cdots<s_{\si(n)}<t$
(see, for instance,  formula (4.4) in \cite{HN} or  formula (3.3) in \cite{HHNT}).

 To prove the existence and uniqueness of the solution, it suffices to show the convergence of the chaos expansion  (\ref{k6}) in $L^2(\Omega)$, that is, 
 \begin{equation} \label{k7}
 \sum_{n=0}^{\infty} n! \|f_n(\cdot,t,x)\|^2_{{\HH}^{\otimes n}}< \infty\,.
 \end{equation}
As in \cite{HHLNT}, we have, with the convention $s_{\sigma(n+1)} =t$,
\begin{eqnarray*}
&&n! \|f_n(\cdot,t,x)\|^2_{{\HH}^{\otimes n}}\\
&=&\frac{C^n}{n!} \int_{[0,t]^n} \int_{\RR^{n\ell}} \Big |\int_{\RR^{\ell}} \prod_{j=1}^n e^{-\frac{1}{2} (s_{\sigma(j+1)} - s_{\sigma(j)}) |\xi^{\sigma(j)} + \cdots + \xi^{\sigma(1)}-\zeta|^2}   e^{-ix\cdot(\xi^{\sigma(n)} + \cdots + \xi^{\sigma(1)} - \zeta)} \\
&&\times \mathcal{F}u_0(\zeta) e^{-\frac{s_{\sigma(1)} |\zeta|^2}{2}} d\zeta\Big|^2 \mu(d\xi) ds\\
&=& \frac{C^n}{n!} \int_{[0,t]^n} \int_{\RR^{n\ell}} \Big |\int_{\RR^{\ell}} \prod_{j=1}^n e^{-\frac{1}{2} (s_{\sigma(j+1)} - s_{\sigma(j)}) |\xi^{\sigma(j)} + \cdots + \xi^{\sigma(1)}-\zeta|^2}   e^{ix\cdot\zeta} \mathcal{F}u_0(\zeta) e^{-\frac{s_{\sigma(1)} |\zeta|^2}{2}} d\zeta\Big|^2  \mu(d\xi) ds\\
&=&\frac{C^n}{n!} \int_{[0,t]^n} \int_{\RR^{n\ell}} \Big |\int_{\RR^{\ell}} e^{-\frac{t}{2} |\zeta|^2- \sum_{j=1}^n (s_{\sigma(j+1)}-s_{\sigma(j)}) \zeta \cdot (\xi^{\sigma(j)}+\cdots + \xi^{\sigma(1)}) - \frac 12 \sum_{j=1}^n(s_{\sigma(j+1)} - s_{\sigma(j)}) |\xi^{\sigma(j)} + \cdots + \xi^{\sigma(1)}|^2}    \\
&&\times   e^{ix\cdot\zeta} \mathcal{F}u_0(\zeta)  d\zeta\Big|^2  \mu(d\xi)ds\,,
\end{eqnarray*}
for some constant $C>0$, where $\mu(d\xi) =\prod_{j=1}^n \mu(d\xi^j)$ and $ds=ds_1 \cdots ds_n$.
Then by Plancherel's theorem, and the fact that 
\begin{equation}
\mathcal{F}(e^{-a|x|^2-b\cdot x})(y)=\left(\frac{\pi}{a}\right)^{\frac{\ell}{2}} e^{\frac{(b+iy)^2}{4a}}\,,
\end{equation}
for every $y\in\RR^\ell$, we have 
\begin{eqnarray*}
&&n! \|f_n(\cdot,t,x)\|^2_{{\HH}^{\otimes n}}\\
&=& \frac{C^n}{n!} \int_{[0,t]^n} \int_{\RR^{n\ell}} \Big |\int_{\RR^{\ell}} \left(\frac{2\pi}{t}\right)^{\frac{\ell}{2}} \exp \Big(\frac{1}{2t} \big( -\sum_{j=1}^n (s_{\sigma(j+1)} -s_{\sigma(j)}) (\xi^{\sigma(j)}+\cdots+\xi^{\sigma(1)} ) + i y \big)^2 \\
&&-\frac{1}{2} \sum_{j=1}^n(s_{\sigma(j+1)} - s_{\sigma(j)}) |\xi^{\sigma(j)} + \cdots + \xi^{\sigma(1)}|^2 \Big) u_0(y + x)  dy\Big|^2  \mu(d\xi) ds\\
&\leq& \frac{C^n}{n!} \int_{[0,t]^n} \int_{\RR^{n\ell}} \Big |\int_{\RR^{\ell}} \left(\frac{2\pi}{t}\right)^{\frac{\ell}{2}} \exp \Big(\frac{1}{2t} \big|\sum_{j=1}^n (s_{\sigma(j+1)} -s_{\sigma(j)}) (\xi^{\sigma(j)}+\cdots+\xi^{\sigma(1)} ) \big|^2 - \frac{|y|^2}{2t} \\
&&-\frac{1}{2} \sum_{j=1}^n(s_{\sigma(j+1)} - s_{\sigma(j)}) |\xi^{\sigma(j)} + \cdots + \xi^{\sigma(1)}|^2 \Big) |u_0|(y + x)  dy\Big|^2   \mu(d\xi)ds.
\end{eqnarray*}
As a consequence, we can write 
\begin{eqnarray*}
&&n! \|f_n(\cdot,t,x)\|^2_{{\HH}^{\otimes n}}\\
&\leq&\frac{C^n(2\pi)^{2\ell}}{n!}   (p_t*|u_0|(x))^2 \int_{[0,t]^n} \int_{\RR^{n\ell}} \exp \Big(\frac{1}{t} \big|\sum_{j=1}^n (s_{\sigma(j+1)} -s_{\sigma(j)}) (\xi^{\sigma(j)}+\cdots+\xi^{\sigma(1)} ) \big|^2 \\
&&- \sum_{j=1}^n(s_{\sigma(j+1)} - s_{\sigma(j)}) |\xi^{\sigma(j)} + \cdots + \xi^{\sigma(1)}|^2 \Big)   \mu(d\xi) ds\\
&=&{C^n (2\pi)^{2\ell}} (p_t*|u_0|(x))^2  \int_{[0,t]_<^n} \int_{\RR^{n\ell}} \exp \Big(\frac{1}{t} \big|\sum_{j=1}^n (s_{j+1} -s_{j}) (\xi^{j}+\cdots+\xi^{1} ) \big|^2 \\
&&- \sum_{j=1}^n(s_{j+1} - s_{j}) |\xi^{j} + \cdots + \xi^{1}|^2 \Big)   \mu(d\xi) ds\,,
\end{eqnarray*}
where 
\begin{equation}  \label{k8}
 		[0,t]^n_< := \{(t_1,\dots,t_n): 0 < t_1 < \cdots < t_n < t\}.
 	\end{equation}
Finally,  the convergence  (\ref{k7}) follows from Lemma \ref{Lemma:Conv chaos} {in the appendix}. 
\end{proof}

\section{Feynman-Kac formula for the moments in terms of Brownian bridges} 
\label{sec:FKformula}
	We collect some auxiliary results which are needed to prove our main results. 
	\begin{lemma}\label{lem:gamma}
		Let $\gamma:\RR^\ell\to\RR$ be a {bounded function whose Fourier transform (spectral measure $\mu$) is a nonnegative tempered measure.} Let $G=(G^1,\dots,G^n)\in(\RR^\ell)^n$ be a centered Gaussian process indexed by $[0,t]$. 
		For every function  $y=(y^{jk})_{1\le j<k\le n} :[0,t]\to(\RR^\ell)^{n(n-1)/2}$ and real number $a \in \RR$, we have
		\begin{equation}\label{est:gamy}
			\EE \exp\left\{\int_0^t \sum_{1\le j<k\le n}  a \gamma( G_s^j- G_s^k+ y_s^{jk})ds\right\}\le \EE\exp\left\{\int_0^t\sum_{1\le j<k\le n}  |a| \gamma( G_s^j- G_s^k)ds\right\}.
		\end{equation}
	\end{lemma}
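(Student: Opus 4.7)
The plan is to pass to the Fourier side. Since $\gamma$ is bounded with nonnegative spectral measure $\mu$ (which must therefore be a finite measure of total mass $(2\pi)^\ell\gamma(0)$), Bochner-style inversion gives
\begin{equation*}
\gamma(x)=\frac{1}{(2\pi)^\ell}\int_{\RR^\ell}e^{i\xi\cdot x}\mu(d\xi),
\end{equation*}
so the shift $y_s^{jk}$ simply contributes a phase factor of modulus one: $\gamma(G_s^j-G_s^k+y_s^{jk})=\frac{1}{(2\pi)^\ell}\int e^{i\xi\cdot(G_s^j-G_s^k)}e^{i\xi\cdot y_s^{jk}}\mu(d\xi)$. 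The strategy is to Taylor-expand the exponential and show that the shifts $y_s^{jk}$ can only decrease the modulus of each term.

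Concretely, I would set $X=\int_0^t\sum_{j<k}a\gamma(G_s^j-G_s^k+y_s^{jk})ds$ and $Y=\int_0^t\sum_{j<k}|a|\gamma(G_s^j-G_s^k)ds$, both of which are bounded random variables because $\gamma$ is bounded. Hence $\EE e^X=\sum_{N\ge0}\EE X^N/N!$ with absolute convergence, and it suffices to show $|\EE X^N|\le\EE Y^N$ for every $N$. Expanding the $N$-th power produces a sum over $N$-tuples of pairs $(j_m<k_m)$ and an $N$-fold time integral of
\begin{equation*}
a^N\,\EE\prod_{m=1}^N\gamma(G_{s_m}^{j_m}-G_{s_m}^{k_m}+y_{s_m}^{j_mk_m}).
\end{equation*}
Using Fourier and Fubini, this expectation equals
\begin{equation*}
\int_{\RR^{N\ell}}\Bigl(\prod_m e^{i\xi_m\cdot y_{s_m}^{j_mk_m}}\Bigr)\,\EE\exp\Bigl\{i\sum_m\xi_m\cdot(G_{s_m}^{j_m}-G_{s_m}^{k_m})\Bigr\}\prod_m\frac{\mu(d\xi_m)}{(2\pi)^\ell}.
\end{equation*}
The Gaussian characteristic function equals $\exp(-\tfrac12 Q(\xi))$ for some nonnegative quadratic form $Q$, so the inner expectation is real and nonnegative.

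Bounding each phase $|e^{i\xi_m\cdot y_{s_m}^{j_mk_m}}|\le1$ and using $|a|^N$ in place of $a^N$, the integrand is dominated pointwise by the analogous integrand for $Y$ (which corresponds to taking all shifts $y_{s_m}^{j_mk_m}=0$ and replacing $a$ by $|a|$). Since $\mu\otimes\cdots\otimes\mu$ is a positive measure, this yields
\begin{equation*}
|\EE X^N|\le|a|^N\sum_{(j_m,k_m)}\int_{[0,t]^N}\EE\prod_m\gamma(G_{s_m}^{j_m}-G_{s_m}^{k_m})\,ds=\EE Y^N,
\end{equation*}
after recognizing the right-hand side back as the $N$-th moment of $Y$. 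Summing in $N$ gives
\begin{equation*}
\EE e^X=\sum_N\frac{\EE X^N}{N!}\le\sum_N\frac{|\EE X^N|}{N!}\le\sum_N\frac{\EE Y^N}{N!}=\EE e^Y,
\end{equation*}
which is exactly \eqref{est:gamy}. The only mildly delicate step is justifying that $\mu$ is indeed a finite measure (hence the Fourier inversion formula for $\gamma$ holds pointwise), and the interchange of sum and expectation; both follow from boundedness of $\gamma$, which forces $\mu(\RR^\ell)=(2\pi)^\ell\gamma(0)<\infty$ and makes $X,Y$ uniformly bounded. I expect no serious obstacle beyond bookkeeping.
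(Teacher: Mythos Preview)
Your proposal is correct and follows essentially the same approach as the paper: Taylor-expand the exponential, pass to the Fourier side, use that the Gaussian characteristic function $\EE\exp\{i\sum_m\xi_m\cdot(G_{s_m}^{j_m}-G_{s_m}^{k_m})\}$ is nonnegative, and bound the phase factors $e^{i\xi_m\cdot y_{s_m}^{j_mk_m}}$ by $1$. Your write-up is in fact slightly more explicit than the paper's about why the Taylor series converges (boundedness of $X$ and $Y$) and why $\mu$ is finite.
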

	\begin{proof} 
		It suffices to show  that for every $d\ge1$,
		\begin{equation}\label{est:gaman}
			\EE\left[\int_0^t \sum_{1\le j<k\le n} a \gamma(G_s^j-G_s^k+y_s^{jk})ds \right]^d\le \EE\left[\int_0^t \sum_{1\le j<k\le n} |a|\gamma(G_s^j-G_s^k)ds \right]^d\,.
		\end{equation}
		Fix $d\ge1$. We can write
		\begin{align*}
			\EE\left[\int_0^t \sum_{1\le j<k\le n}\gamma(G_s^j-G_s^k+y_s^{jk})ds \right]^d
			&=\EE\int_{[0,t]^d}\prod_{q=1}^d\sum_{1\le j<k\le n}\gamma(G_{s_q}^j-G_{s_q}^k+y_{s_q}^{jk})ds_q
			\\&=\EE\sum_{\substack{1\le j_m<k_m\le n\\1\le m\le d}}\int_{[0,t]^d} \prod_{q=1}^d \gamma(G_{s_q}^{j_q}-G_{s_q}^{k_q}+y_{s_q}^{j_qk_q})ds_q\,.
		\end{align*}
		Using  formula \eqref{eqn:gamspec}, the right-hand side in the above equation  is the same as
		\begin{align*}
			\EE\sum_{\substack{1\le j_m<k_m\le n\\1\le m\le d}}\int_{[0,t]^d}\int_{(\RR^\ell)^d} e^{i \sum_{q=1}^d\xi^q\cdot(G_{s_q}^{j_q}-G_{s_q}^{k_q})}e^{i\sum_{q=1}^d \xi^q\cdot y_{s_q}^{j_qk_q}} {\mu}(d \xi) ds\,,
		\end{align*}
		where  $\mu(d\xi)=\mu(d \xi^1)\cdots \mu(d \xi^d)$ and $ds= ds_1 \cdots ds_d$. 
		It follows that 
		\begin{multline*}
			\EE\left[\int_0^t \sum_{1\le j<k\le n} a \gamma(G_s^j-G_s^k+y_s^{jk})ds \right]^d
			\le  |a|^d \left|\EE\left[\int_0^t \sum_{1\le j<k\le n}\gamma(G_s^j-G_s^k+y_s^{jk})ds \right]^d\right|
			\\=|a|^d\left|\sum_{\substack{1\le j_m<k_m\le n\\1\le m\le d}}\int_{[0,t]^d}\int_{(\RR^\ell)^d}\EE e^{i \sum_{q=1}^d\xi^q\cdot (G_{s_q}^{j_q}-G_{s_q}^{k_q})}e^{i\sum_{q=1}^d \xi^q\cdot y_{s_q}^{j_qk_q}}\mu(d\xi) ds\right|.
		\end{multline*}
		Applying the  triangle inequality, noting that $\EE e^{i \sum_{q=1}^d\xi^q\cdot(G_{s_q}^{j_q}-G_{s_q}^{k_q})}$ is strictly positive and $|e^{i\sum_{q=1}^d \xi^q\cdot y_{s_q}^{j_qk_q}}|=1$, the above quantity  is at most	
		\begin{equation*}
			|a|^d \EE\sum_{\substack{1\le j_m<k_m\le n\\1\le m\le d}}\int_{[0,t]^d}\int_{(\RR^\ell)^d} e^{i \sum_{q=1}^d\xi^q\cdot(G_{s_q}^{j_q}-G_{s_q}^{k_q})}\mu(d\xi) ds\,,
		\end{equation*}
		which is the same as $\EE\left[\int_0^t \sum_{1\le j<k\le n}|a|\gamma(G_s^j-G_s^k)ds \right]^d$,  by our argument at the beginning of the proof. Hence, we have shown \eqref{est:gaman} and the result follows.
	\end{proof}
	
	The next proposition is the key ingredient in the proof of the Feynman-Kac formula for the moments using Brownian  bridges. We recall that $\gamma_\ep$ is defined in  (\ref{eqn:ge}).
 
	\begin{proposition}\label{prop:expbridge}
		Suppose that the spectral measure satisfies  hypotheses    \ref{H1} or \ref{H2}. Let $\kappa$ be a real number. Then for each $\epsilon>0$, the function
		\begin{equation*}
			F_ \epsilon(x^1,\dots,x^n)=\EE\exp\left\{\kappa\int_0^t \sum_{1\le j<k\le n} \gamma_ \epsilon(B_{0,t}^j(s)- B^k_{0,t}(s)+x^j-x^k)ds\right\}
		\end{equation*}
		is well-defined and continuous. Moreover, as $\epsilon\downarrow0$, $F_ \epsilon$ converges uniformly. We denote the limiting function as
		\begin{equation*}
			\EE\exp\left\{\kappa\int_0^t \sum_{1\le j<k\le n} \gamma( B_{0,t}^j(s)- B^k_{0,t}(s)+x^j-x^k)ds\right\}\,.
		\end{equation*}
	\end{proposition}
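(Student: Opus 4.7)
The plan is to handle the assertions about fixed $\epsilon$ and the uniform convergence as $\epsilon\downarrow 0$ separately. For a fixed $\epsilon>0$, the regularized spectral measure $e^{-\epsilon|\xi|^2}\mu(d\xi)$ is finite (since $\mu$ is tempered), so $\gamma_\epsilon$ is a bounded continuous function. Thus the exponent in $F_\epsilon$ is uniformly bounded by $|\kappa|\binom{n}{2}t\|\gamma_\epsilon\|_\infty$, and dominated convergence immediately yields the well-definedness of $F_\epsilon$ together with its joint continuity in $(x^1,\dots,x^n)$.

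The substance of the claim is the uniform convergence. Writing
\begin{equation*}
X_\epsilon(x):=\kappa\int_0^t\sum_{1\le j<k\le n}\gamma_\epsilon\bigl(B_{0,t}^j(s)-B_{0,t}^k(s)+x^j-x^k\bigr)ds,
\end{equation*}
the elementary bound $|e^a-e^b|\le|a-b|(e^a+e^b)$ combined with Cauchy--Schwarz gives
\begin{equation*}
|F_\epsilon(x)-F_{\epsilon'}(x)|^2\le \EE|X_\epsilon(x)-X_{\epsilon'}(x)|^2\bigl(\EE e^{2X_\epsilon(x)}+\EE e^{2X_{\epsilon'}(x)}\bigr).
\end{equation*}
It therefore suffices to establish \emph{(a)} a uniform exponential moment bound $\sup_{\epsilon>0}\sup_x\EE e^{2X_\epsilon(x)}<\infty$, and \emph{(b)} $\sup_x\EE|X_\epsilon(x)-X_{\epsilon'}(x)|^2\to 0$ as $\epsilon,\epsilon'\downarrow0$.

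For \emph{(a)}, since $\gamma_\epsilon$ is bounded with nonnegative tempered spectral measure, Lemma~\ref{lem:gamma} applied with $y_s^{jk}=x^j-x^k$ bounds $\EE e^{2X_\epsilon(x)}$ by the $x$-free quantity
\begin{equation*}
\EE\exp\Bigl\{2|\kappa|\int_0^t\sum_{1\le j<k\le n}\gamma_\epsilon\bigl(B_{0,t}^j(s)-B_{0,t}^k(s)\bigr)ds\Bigr\}.
\end{equation*}
Expanding in Taylor series and writing each moment in the Fourier domain produces a term-by-term bound by the same series with each factor $e^{-\epsilon|\xi^q|^2}$ replaced by $1$; the resulting $\epsilon$-free series is summable under either \ref{H1} or \ref{H2} by Fourier-mode estimates of the same flavor as Lemma~\ref{Lemma:Conv chaos} (after noting that the characteristic function of Brownian-bridge increments decays at least as fast as in the Brownian-motion case). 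For \emph{(b)}, expanding the square and writing $\gamma_\epsilon-\gamma_{\epsilon'}$ in Fourier mode makes the entire $x$-dependence of the integrand a unimodular phase multiplied by a nonnegative Gaussian characteristic function; as in the proof of Lemma~\ref{lem:gamma}, the triangle inequality removes the phase. The remaining $x$-independent integrand is dominated pointwise in $(\xi,\eta)$ by the summable bound already used in \emph{(a)}, while $|e^{-\epsilon|\xi|^2}-e^{-\epsilon'|\xi|^2}|\to 0$ pointwise, so dominated convergence finishes the argument.

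The principal obstacle is assembling \emph{(a)} under hypothesis \ref{H1}: since $\gamma$ is only a distribution there, the finiteness of the dominating $\epsilon$-free series must be extracted from the quadratic-growth condition \eqref{eq:H12} together with the subadditivity \eqref{eq:H11} of the spectral density $f$, and the bridge-versus-motion comparison of characteristic functions has to be recorded explicitly; everything else is bookkeeping.
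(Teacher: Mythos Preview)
Your strategy is essentially the paper's: remove the $x$-dependence via Lemma~\ref{lem:gamma}, use $|e^a-e^b|\le|a-b|(e^a+e^b)$ and Cauchy--Schwarz, and control both the uniform exponential moment and the $L^2$ difference by the Fourier-mode computations underlying Lemma~\ref{Lemma:Conv chaos}. The paper additionally streamlines (a) by reducing to $n=2$ via H\"older and noting $B^1_{0,t}-B^2_{0,t}\stackrel{\mathrm{law}}{=}\sqrt{2}\,B_{0,t}$, and streamlines (b) via Minkowski to a single pair $(j,k)$; these are cosmetic.

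There is, however, one genuine slip in your sketch. You assert that ``the characteristic function of Brownian-bridge increments decays at least as fast as in the Brownian-motion case.'' This is false, and in fact the opposite holds: by Lemma~\ref{Var BB},
\[
\mathrm{Var}\Bigl(\sum_{j}\xi^j\cdot B_{0,t}(s_j)\Bigr)=\sum_j|\xi^1+\cdots+\xi^j|^2(s_{j+1}-s_j)-\frac1t\Bigl|\sum_j(\xi^1+\cdots+\xi^j)(s_{j+1}-s_j)\Bigr|^2,
\]
so the bridge variance is \emph{smaller} than the motion variance, and the characteristic function is \emph{larger}. A direct comparison in the direction you state therefore does not work. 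The paper's route around this (Step~3 of Lemma~\ref{Lemma:Conv chaos}) is to use Cauchy--Schwarz together with the time-reversal symmetry $B_{0,t}\stackrel{\mathrm{law}}{=}B_{0,t}(t-\cdot)$ to reduce the time integral to $[0,t/2]$, and then to invoke the absolute-continuity formula~\eqref{id:density} with $\lambda=\tfrac12$, which bounds the bridge expectation by (a constant times) the corresponding Brownian-motion expectation over $[0,t/2]$. Once you replace your parenthetical with this mechanism, the rest of your argument goes through exactly as written.
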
 
	
	\noindent
	{\bf Remark.} Actually, for each $1 \le j<k \le n$, the integral
	\[
	\int_0^t  \gamma( B_{0,t}^j(s)- B^k_{0,t}(s)+x^j-x^k)ds
	\]
	converges in $L^p(\Omega)$ as $\epsilon$ tends to zero, for each $p\ge 1$, and we can also denote the limit as
	\[
	\int_0^t\int_{\RR^\ell}  e^{i \xi\cdot(B_{0,t}^j(s)- B^k_{0,t}(s)+x^j-x^k)}\mu(d \xi)ds.
	\]

	\begin{proof} 
		 We claim that  for every $\kappa\in\RR$
		\begin{equation}  \label{k9}
			\sup_{\epsilon>0} \EE\exp\left\{\kappa\int_0^t \sum_{1\le j<k\le n} \gamma_ \epsilon(B_{0,t}^j(s)- B^k_{0,t}(s))ds\right\}<\infty\,.
		\end{equation}
		By H\"older inequality, it suffices to show the previous inequality for $n=2$. This is obtained by noting that $B^1_{0,t}+B^2_{0,t}\stackrel{\rm{law}}{=}\sqrt 2 B_{0,t}$ and the finiteness comes from Step 2 and 3 of Lemma \ref{Lemma:Conv chaos}.

		We now show that $F_ \epsilon$ converges uniformly as $\epsilon\downarrow0$. 
	Applying Lemma \ref{lem:gamma} and the  estimate (\ref{k9}), we see that for all  $\kappa\in \RR$
	\[
	\sup_{\epsilon>0} \sup_{x^1,\dots,x^n\in \RR^\ell} 	\EE  	 \exp\lt(\kappa\int_0^t \sum_{1\le j<k\le n} \gamma_ \epsilon(B_{0,t}^j(s)- B^k_{0,t}(s)+x^j-x^k)ds\rt)<\infty.
	\]
	As a consequence, applying the elementary inequality $e^a-e^b\le\frac12(e^a+e^b)(a-b)$, Cauchy-Schwarz inequality and the previous estimate, we obtain
	\begin{align*}
		|F_{\epsilon'}(x)-F_{\epsilon}(x)|\le C\lt[\EE\left(\sum_{1\le j<k\le n}\int_0^t(\gamma_{\epsilon}-\gamma_{\epsilon'})(B_{0,t}^j(s)-B^k_{0,t}(s)+x^j-x^k) ds\right)^2\rt]^{\frac12},
	\end{align*}
	for all $\epsilon'>\epsilon>0$.
  Together with Minkowski inequality, we see that $\sup_{x\in\RR^\ell}|F_{\epsilon'}(x)-F_{\epsilon}(x)|$ is at most a constant multiple of
	\begin{align*}
		&\sup_{x\in\RR^\ell}\sum_{1\le j<k\le n}\lt[\EE\left(\int_0^t(\gamma_{\epsilon}-\gamma_{\epsilon'})(B_{0,t}^j(s)-B_{0,t}^k(s)+x^j-x^k)ds \right)^2\rt]^{\frac12}
		\\&= \frac{n(n-1)}2  \lt[\EE\left(\int_0^t(\gamma_{\epsilon}-\gamma_{\epsilon'})(B_{0,t}^1(s)-B_{0,t}^2(s)) ds\right)^2\rt]^{\frac12}\,,
	\end{align*}
	where the last line {follows obviously from the proof of Lemma \ref{lem:gamma}}.   For every $\epsilon'>\epsilon>0$, as in the computation of (\ref{eqn:d.int.form}), we have
	\begin{multline}
	\EE\left(\int_0^t(\gamma_{\epsilon}-\gamma_{\epsilon'})(B_{0,t}^1(s)-B_{0,t}^2(s)) ds\right)^2     \\ 
	=2\int_{[0,t]^2_<} \int_{(\RR^\ell)^2} \exp\Bigg\{ \sum_{j=1}^2 - |\xi^1 + \cdots + \xi^j|^2 (s_{j+1}-s_j)  
	+\frac{1}{t} \left|\sum_{j=1}^2 (\xi^1 + \cdots+ \xi^j)(s_{j+1}-s_j) \right|^2 \Bigg\}
	\\\times\prod_{j=1}^2 \left(e^{-\epsilon|\xi^j|^2} - e^{-\epsilon'|\xi^j|^2}\right)\mu(d\xi^1)\mu(d\xi^2) ds_1ds_2\,. \label{eqn:2.int.form}
	\end{multline} 
 Then from the proof of Lemma  \ref{Lemma:Conv chaos} and the dominated convergence theorem, we see that as $\epsilon,\epsilon'\downarrow0$, the right-hand side  of (\ref{eqn:2.int.form}) converges to 0. It follows that as $\epsilon\downarrow0$, $F_ \epsilon$ converges uniformly to a continuous function. 
	\end{proof}

{Notice that the proof of  Proposition \ref{prop:expbridge} uses only the uniform exponential integrability and the inequality  $|e^x-e^y|\leq (e^x+e^y)|x-y|$. As a consequence, the result still holds if we replace Brownian bridges by Brownian motions, as it is shown in the next proposition, which has its own interest.}  

	\begin{proposition}\label{prop:expbrownian}
Suppose that the spectral measure satisfies  hypotheses \ref{H1} or \ref{H2}. Let $\kappa$ be a real number and $\{B^j(s),s\ge0\}$, $j=1,\dots,n$, be independent Brownian motions in $\RR^\ell$. Then as $\epsilon\downarrow0$, the random variables
		\begin{equation*}
			G_{\epsilon}(x^1,\dots,x^n):=\exp\left\{\kappa\int_0^t \sum_{1\le j<k\le n} \gamma_ \epsilon(B^j(s)- B^k(s)+x^j-x^k)ds\right\}
		\end{equation*}
		converge in $L^p(\Omega)$ for every $p\ge1$ to a random variable which we denote by
		\begin{equation*}
			\exp\left\{\kappa\int_0^t \sum_{1\le j<k\le n} \gamma( B^j(s)- B^k(s)+x^j-x^k)ds\right\}\,.
		\end{equation*}
	\end{proposition}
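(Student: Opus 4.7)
The argument is a direct adaptation of the proof of Proposition \ref{prop:expbridge}, replacing the Brownian bridges $B_{0,t}^{j}$ by the independent Brownian motions $B^{j}$. As the remark preceding the statement indicates, only two ingredients are required: (i) uniform-in-$\epsilon$ exponential integrability of the functional inside $G_\epsilon$, and (ii) the elementary inequality $|e^{a}-e^{b}|\le \tfrac12(e^{a}+e^{b})|a-b|$. Since $(B^{1},\dots,B^{n})$ is still a centered Gaussian process, Lemma \ref{lem:gamma} (and the moment inequality \eqref{est:gaman} inside its proof) applies verbatim.

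\textbf{Step 1: uniform exponential integrability.} I will show that for every $\kappa\in\RR$,
\[
\sup_{\epsilon>0}\sup_{x^{1},\dots,x^{n}\in\RR^{\ell}}\EE\exp\Bigl\{\kappa\int_{0}^{t}\sum_{1\le j<k\le n}\gamma_{\epsilon}\bigl(B^{j}(s)-B^{k}(s)+x^{j}-x^{k}\bigr)\,ds\Bigr\}<\infty.
\]
By H\"older's inequality it suffices to treat $n=2$; Lemma \ref{lem:gamma} then removes the shifts, and since $B^{1}-B^{2}\stackrel{\mathrm{law}}{=}\sqrt{2}\,\widetilde B$ for a standard Brownian motion $\widetilde B$, the task reduces to bounding $\EE\exp\{c\int_{0}^{t}\gamma_{\epsilon}(\sqrt{2}\widetilde B(s))\,ds\}$ uniformly in $\epsilon$. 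Expanding the exponential in Taylor series and computing each term via the Fourier representation \eqref{eqn:ge} produces iterated integrals over $[0,t]_{<}^{m}\times\RR^{m\ell}$ against the spectral measure $e^{-\epsilon|\xi|^{2}}\mu(d\xi)$, which are dominated term-by-term by those obtained with $\epsilon=0$. Summability of the resulting series is exactly the issue addressed in Lemma \ref{Lemma:Conv chaos} of the appendix (used already in the proof of Theorem \ref{thm1}); it follows from Dalang's condition under \ref{H2} and from \eqref{eq:H12} under \ref{H1}.

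\textbf{Step 2: Cauchy property in $L^{p}$.} For $0<\epsilon<\epsilon'$ set
\[
\Delta_{\epsilon,\epsilon'}(x):=\kappa\int_{0}^{t}\sum_{1\le j<k\le n}(\gamma_{\epsilon}-\gamma_{\epsilon'})\bigl(B^{j}(s)-B^{k}(s)+x^{j}-x^{k}\bigr)\,ds.
\]
The inequality $|e^{a}-e^{b}|^{p}\le 2^{-p}|a-b|^{p}(e^{a}+e^{b})^{p}$ combined with Cauchy--Schwarz gives
\[
\EE|G_{\epsilon}(x)-G_{\epsilon'}(x)|^{p}\le C_{p}\bigl(\EE|\Delta_{\epsilon,\epsilon'}(x)|^{2p}\bigr)^{1/2}\bigl(\EE G_{\epsilon}(x)^{2p}+\EE G_{\epsilon'}(x)^{2p}\bigr)^{1/2},
\]
and the second factor is uniformly bounded by Step~1. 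Since $e^{-\epsilon|\xi|^{2}}-e^{-\epsilon'|\xi|^{2}}\ge 0$ for $\epsilon<\epsilon'$, the function $\gamma_{\epsilon}-\gamma_{\epsilon'}$ has the nonnegative spectral measure $(e^{-\epsilon|\xi|^{2}}-e^{-\epsilon'|\xi|^{2}})\mu(d\xi)$, so the moment estimate proved inside Lemma \ref{lem:gamma} allows me to drop the shifts $x^{j}-x^{k}$ from $\EE|\Delta_{\epsilon,\epsilon'}|^{2p}$. Expanding the resulting moment via the Fourier representation, as in computation \eqref{eqn:2.int.form} (but with the Brownian-motion characteristic functions $e^{-|\xi|^{2}s}$ in place of the Brownian-bridge ones), I obtain an integral over $[0,t]_{<}^{2p}\times\RR^{2p\ell}$ against $\prod_{i=1}^{2p}(e^{-\epsilon|\xi^{i}|^{2}}-e^{-\epsilon'|\xi^{i}|^{2}})\mu(d\xi^{i})$. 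The integrand is dominated by the corresponding expression with $\epsilon=\epsilon'=0$, which is finite by (the Brownian-motion analogue of) Lemma \ref{Lemma:Conv chaos}, and the prefactor tends to $0$ pointwise. Dominated convergence then yields $\EE|\Delta_{\epsilon,\epsilon'}|^{2p}\to 0$ as $\epsilon,\epsilon'\downarrow 0$, so $\{G_{\epsilon}\}$ is Cauchy in $L^{p}(\Omega)$ for every $p\ge 1$, and the limit is denoted as in the statement.

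\textbf{Main obstacle.} The principal technical step is Step~1: because Brownian motion is not pinned like the bridge, one cannot appeal to the explicit bounded-support structure used for $B_{0,t}$, and the uniform exponential integrability must instead be extracted from the spectral-side bookkeeping used in the chaos estimate of Theorem \ref{thm1}. Once that bound is in place, Step~2 proceeds mechanically, as the rest is essentially a dominated-convergence argument that inherits all its hard estimates from Lemma \ref{Lemma:Conv chaos}.
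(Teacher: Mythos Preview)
Your proposal is correct and follows essentially the same route as the paper: uniform exponential integrability via the spectral expansion and Lemma~\ref{Lemma:Conv chaos}, then the Cauchy property from the inequality $|e^{a}-e^{b}|\le\tfrac12(e^{a}+e^{b})|a-b|$, Cauchy--Schwarz, Lemma~\ref{lem:gamma} to remove the shifts, and dominated convergence on the Fourier side. The paper's written argument is terser---it invokes ``the same lines as Proposition~\ref{prop:expbridge}'' and only displays the $L^{2}$-type bound on $\Delta_{\epsilon,\epsilon'}$---whereas you spell out the $L^{p}$ version with $2p$-th moments; but the substance is identical. One small omission: when you expand $\EE|\Delta_{\epsilon,\epsilon'}|^{2p}$ after dropping the shifts you should, as the paper does in Proposition~\ref{prop:expbridge}, first apply Minkowski to reduce the sum $\sum_{j<k}$ to a single pair before writing the integral over $[0,t]_{<}^{2p}\times\RR^{2p\ell}$; otherwise the cross terms from different pairs do not collapse to the clean product form you describe.
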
 
{ 
\begin{proof}
Using similar arguments as Proposition \ref{prop:expbridge} first claim that for every $\kappa \in \RR$, 
\begin{equation}
\sup_{\epsilon > 0} \EE \exp \left \{ \kappa\int_0^t \sum_{1\le j<k\le n} \gamma_ \epsilon(B^j(s)- B^k(s)+x^j-x^k)ds \right\} < \infty\,. 
\end{equation}
By H\"older inequality, it suffices to show the previous inequality for $n=2$. For every    $d \in\NN$, we have 
	\begin{eqnarray} \notag
&&	\EE\left[   \int_0^t \gamma_ \epsilon(B^1(s)- B^2(s)+x^1-x^2)ds \right]^d \\ \notag 
  &=& \EE \left [ \int_0^t \int_{\RR^\ell} e^{i \xi \cdot(B^1(s)- B^2(s)+x^1-x^2)} \mu_{\epsilon}(d\xi) ds\right]^d\\ \notag
	&\leq&d!\int_{[0,t]^d_<} \int_{(\RR^\ell)^d} \exp\Bigg\{ \sum_{j=1}^d - |\xi^1 + \cdots + \xi^j|^2 (s_{j+1}-s_j) 
	 \Bigg\}\mu_\epsilon(d\xi) ds\\ \notag
	&\leq&d!\int_{[0,t]^d_<} \int_{(\RR^\ell)^d} \exp\Bigg\{ \sum_{j=1}^d - |\xi^1 + \cdots + \xi^j|^2 (s_{j+1}-s_j) 
	 \Bigg\}\mu(d\xi) ds\,,
	\label{eqn:d.int.form BM}
	\end{eqnarray}
using Taylor expansion and following the proof of Lemma \ref{Lemma:Conv chaos} we prove the claim. Then we follow the same lines of the proof of Proposition \ref{prop:expbridge} to get, for any $\epsilon'> \epsilon>0$, 
\begin{eqnarray*}
&&\left|G_{\epsilon}(x^1,\dots,x^n) - G_{\epsilon'}(x^1,\dots,x^n) \right|\\
&\leq&C \left[ \EE \left( \int_0^t (\gamma_\epsilon - \gamma_{\epsilon'}) (B^1(s)-B^2(s)) ds \right)^2  \right]^{1/2}\\
&\leq& C \bigg[ \int_{[0,t]^2_<} \int_{(\RR^\ell)^2} \exp\Bigg\{ \sum_{j=1}^2 - |\xi^1 + \cdots + \xi^j|^2 (s_{j+1}-s_j) \Bigg\} \\
&& \times \prod_{j=1}^2 \left(e^{-\epsilon|\xi^j|^2} - e^{-\epsilon'|\xi^j|^2}\right)\mu(d\xi^1)\mu(d\xi^2) ds_1ds_2\bigg]^{1/2}\,,
\end{eqnarray*}
then it follows from the proof of Lemma \ref{Lemma:Conv chaos} and dominated convergence theorem that as $\epsilon,\epsilon' \downarrow 0$, the above expression converges to $0$, this completes the proof.
\end{proof}	  }
	
	In the sequel we will make use of the notations
		\begin{equation}  \label{k12}
	\Gamma(x)=\sum_{1\le j<k\le n}\gamma(x^j-x^k) \quad  {\rm and} \quad 	\Gamma_ \epsilon(y)=\sum_{1\le j<k\le n}\gamma_ \epsilon(y^j-y^k).
		\end{equation}

	As an application, we have the following Feynman-Kac formula based on Brownian bridges.
	\begin{proposition}    \label{prop1}
		Assume conditions    \ref{H1} or \ref{H2}.  Suppose that the initial condition satisfies condition (\ref{eq1}). Suppose that $\{B_{0,t}^j(s),s\in[0,t]\}$, $j=1,\dots, n$ are independent Brownian bridges.
		Then for every $x^1,\dots,x^n\in\RR^\ell$,
		\begin{eqnarray}\label{eqn:FKbridge}
			\EE\left[\prod_{j=1}^n u(t,x^j)\right]
			&=&\int_{(\RR^\ell)^n}   \notag 
			\EE\exp\left\{\int_0^t\sum_{1\le j<k\le n} \gamma\left(B_{0,t}^j(s)-B_{0,t}^k(s)+x^j-x^k+\frac st(y^j-y^k)\right)ds\right\}  \\
			&&  \times \prod_{j=1}^n [u_0(x^j+y^j)p_t(y^j)]d y^1\cdots d y^n\,.
		\end{eqnarray}
	\end{proposition}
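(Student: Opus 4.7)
The plan is to prove the formula first for the approximated noise $W_\epsilon$ with smooth covariance $\gamma_\epsilon$, and then pass to the limit $\epsilon \downarrow 0$. Let $u_\epsilon$ denote the mild solution of \eqref{eqn:SHE} with $W$ replaced by $W_\epsilon$. Since $\gamma_\epsilon$ is a bounded continuous function, the standard Brownian-motion Feynman--Kac formula for moments of the parabolic Anderson model should give
\[
\EE\left[\prod_{j=1}^n u_\epsilon(t,x^j)\right] = \EE\left[\prod_{j=1}^n u_0(x^j + B^j(t))\exp\left(\sum_{1\le j<k\le n}\int_0^t \gamma_\epsilon(B^j(s) - B^k(s) + x^j - x^k)\,ds\right)\right],
\]
where $B^1,\dots,B^n$ are independent standard Brownian motions in $\RR^\ell$ starting at $0$. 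This identity is to be obtained by expanding the exponential in Taylor series and matching, term by term, with the chaos expansion \eqref{k6}--\eqref{eq:expression-fn} applied to $\prod_j u_\epsilon(t,x^j)$, using the product formula for Wiener chaos and Fubini.

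Next I would apply the Brownian bridge decomposition $B^j(s) = B^j_{0,t}(s) + \tfrac{s}{t}B^j(t)$, where $B^j_{0,t}$ is a standard Brownian bridge on $[0,t]$ from $0$ to $0$, independent of the endpoint $B^j(t)$. Setting $y^j = B^j(t)$, which have joint density $\prod_j p_t(y^j)$, and applying Fubini, the previous identity becomes
\begin{align*}
\EE\left[\prod_{j=1}^n u_\epsilon(t,x^j)\right] &= \int_{(\RR^\ell)^n} \prod_{j=1}^n \left[u_0(x^j + y^j)\,p_t(y^j)\right] \\
&\quad \times \EE\exp\left(\sum_{1\le j<k\le n}\int_0^t \gamma_\epsilon\left(B^j_{0,t}(s) - B^k_{0,t}(s) + x^j - x^k + \tfrac{s}{t}(y^j - y^k)\right) ds\right) dy^1\cdots dy^n.
\end{align*}

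Finally, I would send $\epsilon \downarrow 0$ on both sides. On the left, the uniform chaos bound \eqref{k7} (valid for $\gamma_\epsilon$ since $e^{-\epsilon|\xi|^2}\mu \le \mu$) together with dominated convergence on each chaos gives $u_\epsilon(t,x) \to u(t,x)$ in $L^2(\Omega)$; analogous uniform $L^{2n}$ bounds and H\"older then yield convergence of the $n$-point correlations $\EE[\prod_j u_\epsilon(t,x^j)] \to \EE[\prod_j u(t,x^j)]$. On the right, the inner expectation converges pointwise to the claimed limit by Proposition~\ref{prop:expbridge}, while Lemma~\ref{lem:gamma} combined with the uniform estimate \eqref{k9} provides a shift-independent uniform bound
\[
\sup_{\epsilon>0}\sup_{y\in(\RR^\ell)^n} \EE\exp\left(\sum_{j<k}\int_0^t \gamma_\epsilon(\cdots)\,ds\right) \le \sup_{\epsilon>0}\EE\exp\left(\sum_{j<k}\int_0^t \gamma_\epsilon(B^j_{0,t}(s) - B^k_{0,t}(s))\,ds\right) < \infty.
\]
Since $\int_{(\RR^\ell)^n} \prod_j u_0(x^j+y^j) p_t(y^j)\,dy = \prod_j (p_t*u_0)(x^j) < \infty$ by \eqref{eq1}, dominated convergence passes the limit inside the integral and completes the proof. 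The main obstacle will be rigorously justifying the Brownian-motion Feynman--Kac formula in the first step: this requires a careful combinatorial chaos-expansion calculation, which, while standard for the parabolic Anderson model with regular noise, needs some care in the present setting where $u_0$ is merely a tempered measure and one must also verify the convergence $u_\epsilon \to u$ of the approximated solutions.
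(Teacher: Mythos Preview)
Your proposal is correct and follows essentially the same route as the paper's proof: approximate by $\gamma_\epsilon$, invoke the Brownian-motion Feynman--Kac moment formula, decompose each $B^j$ into a bridge plus its endpoint, and pass to the limit using Proposition~\ref{prop:expbridge}, Lemma~\ref{lem:gamma}, and dominated convergence. The only difference is that the paper cites Conus's result for the Brownian-motion Feynman--Kac formula (noting that although Conus assumes $\gamma\ge 0$, the formula extends easily to bounded $\gamma_\epsilon$), rather than rederiving it via chaos matching; and the paper obtains the uniform higher-moment bounds on $u_\epsilon$ directly from the bridge representation \eqref{eq5} itself, rather than appealing separately to chaos estimates.
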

	\begin{proof} 
	For any $\ep>0$ we denote by $u_\ep(t,x)$ the solution to the stochastic heat equation
	\[
	\frac {\partial u_\ep }{\partial t }=\frac12  \Delta u_{\ep} +u_\ep \dot{W_\ep}\,,\quad u(0,\cdot)=u_0(\cdot)\,,
	\]
	where $W_\ep$ is a white noise in time and it has the spectral spatial measure $e^{-\ep |\xi|^2} \mu(\xi)$. From the results  of Conus \cite{Conus} we have the following Feynman-Kac formula for the moments of $u_\ep$ 
	 \begin{multline}\label{eqn:FKn}
		\EE\left[ \prod_{j=1}^n u_\ep(t,x^j) \right]
		\\=\EE \left( \prod_{j=1}^n u_0(B^j(t)+x^j)\exp\left\{\sum_{1\le j<k\le n}\int_0^t \gamma_\ep(B^j(s)-B^k(s)+(x^j-x^k) )ds \right\}\right),
	\end{multline}
	where $B^j$ are independent $\ell$-dimensional standard Brownian motions.  We remark that in \cite{Conus} it is required that $\gamma$ is a non-negative function, which is not necessarily true for $\gamma_\ep$. However,   $\gamma_\ep$ is bounded, and, in this case, it is not difficult to show that  (\ref{eqn:FKn})  still holds.
	
	For each $j=1,\dots,n$ and every fixed $t>0$, the Brownian motion $B^j$ admits the following decomposition
		\begin{equation}
			B^j(s)=B_{0,t}^j(s)+\frac st B^j(t),
		\end{equation}
		where $\{B_{0,t}^j(s),s\in[0,t]\}$, $j=1,\dots,n$ are Brownian bridges on $\mathbb{R}^\ell$ independent from $\{B^j(t), 1\le  j \le  n\}$ and from each other. Thus, identity \eqref{eqn:FKn} can be written as
		\begin{equation}  \label{eq5}
		\EE\left[ \prod_{j=1}^n u_\ep(t,x^j) \right]=\int_{(\RR^\ell)^n}
			 \prod_{j=1}^n [u_0(x^j+y^j)p_t(y^j)]\EE\exp\left\{\int_0^t\Gamma_ \epsilon(B_{0,t}(s)+x+\frac st y)ds\right\}dy\,,
			 \end{equation}
			 where $\Gamma_\ep$ is defined in (\ref{k12}).

		From Proposition \ref{prop:expbridge} and the  dominated convergence theorem, the right-hand side of 
		(\ref{eq5}) converges to the right-hand side of (\ref{eqn:FKbridge}).  From the Wiener chaos expansion and the computations in  the proof of Theorem \ref{thm1}, it follows easily that $u_\ep(t,x)$ converges in $L^2(\Omega)$ to $u(t,x)$. On the other hand, from (\ref{eq5}) it follows that the moments of all orders of $u_\ep(t,x)$ are uniformly bounded in $\ep$. As a consequence, the left-hand side of 
		(\ref{eq5}) converges to the left-hand side of (\ref{eqn:FKbridge}).  This completes the proof. 
	\end{proof}
 	In fact, for regular function $V$, Feynman-Kac formulas based on Brownian bridges for the solution of $\frac {\partial u}{\partial t}- \Delta u=uV$ are not new and we refer the  readers to \cite{JFV} and the references therein for further details and other applications. 
 
	\begin{remark}
	If the initial condition $u_0$ is nonnegative, one can show that $u(t,x) \ge 0$ a.s., for all $t\ge 0$ and $x\in \RR^\ell$. 
	This follows from the fact that $u_\ep(t,x)$ is nonnegative for any $\ep$, where $u_\ep$ is the random field introduced in the proof of   Proposition \ref{prop1}.
	\end{remark}

\section{{Large deviation and approximation of covariance function}}
\label{sec:largedev}

{ In this section we give some key results that are need in proving Theorem \ref{thm:largetimemoment}. We first give some results from large deviation theory, which can be applied to the case when our covariance is a continuous and bounded function, see Lemma \ref{lem:Vupper}. Then for the general covariance, we will approximate it using the regularized covariance function \eqref{eqn:ge}. }

 	\begin{lemma}\label{lem:Vupper}
		Let $\{B_{0,t}(s),s\in[0,t]\}$ be a Brownian bridge in $\RR^n$. Let $F: \RR^n\to\RR$ be a bounded continuous function.
		Let $o(1)$ be a quantity such that $o(1)\to0$ as $t\to\infty$. Then, for every fixed $x_0\in\RR^n$,
		\begin{align}
			&\lim_{t\to\infty}\frac1t\log\sup_{|y|\le o(1)t} \EE \exp\left\{\int_0^t F\left(B_{0,t}(s)+x_0+\frac st y\right)ds \right\}
			\nonumber\\&=\lim_{t\to\infty}\frac1t\log\inf_{|y|\le o(1)t} \EE \exp\left\{\int_0^t F\left(B_{0,t}(s)+x_0+\frac st y\right)ds \right\}
			\nonumber\\&={\cee(F)}\,,\label{est:VbyE}
		\end{align}
			where 
			\begin{equation*}
			\cee(F )=\sup_{g\in\cff_n}\left\{\int_{\RR^n} F(x) g^2(x)dx-\frac12\int_{\RR^n}|\nabla g(x)|^2dx \right\}.
		\end{equation*}
	\end{lemma}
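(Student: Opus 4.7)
The plan is to reduce the bridge expectation to the Feynman--Kac semigroup generated by $H_F:=\frac12\Delta+F$ on $\RR^n$ and then exploit its spectral characterization. Since $s\mapsto B_{0,t}(s)+x_0+\frac{s}{t}y$ is distributed as a Brownian bridge from $x_0$ to $x_0+y$ over $[0,t]$, conditioning Brownian motion on its endpoint yields the standard identity
\begin{equation*}
\EE\exp\Bigl\{\int_0^t F\bigl(B_{0,t}(s)+x_0+\tfrac{s}{t}y\bigr)\,ds\Bigr\} \;=\; \frac{q_t^F(x_0,\,x_0+y)}{p_t(y)}\,,
\end{equation*}
where $q_t^F$ is the (symmetric, strictly positive) kernel of $P_t^F=e^{tH_F}$. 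Under $|y|\le o(1)t$, a direct computation gives $\frac{1}{t}\log p_t(y)\to 0$ uniformly, so the lemma reduces to proving
\begin{equation*}
\lim_{t\to\infty}\frac{1}{t}\log q_t^F(x_0,\,x_0+y)=\cee(F)\quad\text{uniformly on}\ \{|y|\le o(1)t\},
\end{equation*}
after observing that Rayleigh--Ritz identifies $\cee(F)$ with the top of the $L^2$-spectrum of $H_F$, hence with $\frac{1}{t}\log\|P_t^F\|_{L^2\to L^2}$.

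For the upper bound, I combine Cauchy--Schwarz with the symmetry of $q_t^F$ to obtain $q_t^F(x_0,x_0+y)\le\sqrt{q_t^F(x_0,x_0)\,q_t^F(x_0+y,x_0+y)}$, reducing the task to a uniform on-diagonal estimate. For any fixed $s>0$, $q_t^F(z,z)=\langle q_s^F(z,\cdot),\,P_{t-2s}^F q_s^F(z,\cdot)\rangle_{L^2}$, and the spectral bound $\|P_{t-2s}^F\|_{L^2\to L^2}=e^{(t-2s)\cee(F)}$ together with the pointwise control $q_s^F(z,\cdot)\le e^{s\|F\|_\infty}p_s(\cdot-z)$ (which is uniformly in $L^2$) yields $\frac{1}{t}\log q_t^F(z,z)\le\cee(F)+o(1)$ uniformly in $z\in\RR^n$.

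For the lower bound, fix $\delta>0$ and choose a nonnegative $g\in C_c^\infty(\RR^n)$, supported in a ball $B_R$, with $\|g\|_{L^2}=1$ and $\langle g,H_F g\rangle\ge\cee(F)-\delta$. Convexity of $s\mapsto\log\langle g,e^{sH_F}g\rangle$ (it is the log moment generating function of the spectral measure of $g$), together with the value $\langle g,H_F g\rangle$ of its derivative at zero, gives $\langle g,P_s^F g\rangle\ge e^{s(\cee(F)-\delta)}$ for every $s\ge 0$. Splitting via the semigroup property at times $\tau_t$ and $t-\tau_t$ and using the Jensen--Feynman--Kac lower bound $q_{\tau_t}^F(u,v)\ge p_{\tau_t}(v-u)e^{-\tau_t\|F\|_\infty}$ for the two outer kernels over $z_1,z_2\in B_R$, then bounding the middle integral by $\mathbf{1}_{B_R}\ge g/\|g\|_\infty$, yields
\begin{equation*}
q_t^F(x_0,x_0+y)\ge \frac{A_t(x_0)\,A_t(x_0+y)}{\|g\|_\infty^2}\,\langle g,P_{t-2\tau_t}^F g\rangle\,,
\end{equation*}
with $A_t(u):=(2\pi\tau_t)^{-n/2}\exp\{-(R+|u|)^2/(2\tau_t)-\tau_t\|F\|_\infty\}$. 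Choosing $\tau_t=\sqrt{\epsilon_t}\,t$ (where $\epsilon_t\downarrow 0$ is the rate in $|y|\le\epsilon_t t$) makes $\tau_t=o(t)$ and $|y|^2/(\tau_t t)\le\epsilon_t^{3/2}\to 0$ uniformly, so that $\frac{1}{t}\log A_t(x_0+y)\to 0$ uniformly, giving $\frac{1}{t}\log q_t^F(x_0,x_0+y)\ge\cee(F)-\delta+o(1)$ uniformly for $|y|\le\epsilon_t t$. Letting $\delta\downarrow 0$ concludes, and since the upper and lower bounds are uniform in $y$, both the $\sup$ and the $\inf$ in \eqref{est:VbyE} share the limit $\cee(F)$.

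The main obstacle is precisely this uniformity in $y$ for the lower bound: a fixed restart time $\tau$ leaves an uncontrolled Gaussian cost $e^{-|y|^2/(2\tau)}$ that cannot be absorbed when $|y|$ grows with $t$. The remedy is to let the restart time grow with $t$, tuned to the rate $\epsilon_t$ of $o(1)$, so that $\tau_t=o(t)$ while $|y|^2/\tau_t=o(t)$ uniformly; this is exactly where the hypothesis $|y|\le o(1)t$ (rather than $|y|\le\alpha t$ for a fixed $\alpha>0$) is used in an essential way.
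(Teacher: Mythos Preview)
Your proof is correct and takes a genuinely different route from the paper's. The paper argues by comparing the Brownian bridge to Brownian motion via the Girsanov-type density formula \eqref{id:density}: it splits $\int_0^t$ at time $\lambda t$ for fixed $\lambda\in(0,1)$, controls the piece on $[\lambda t,t]$ trivially by $e^{\pm(1-\lambda)t\|F\|_\infty}$, and on $[0,\lambda t]$ absorbs the Radon--Nikodym density (whose exponent is $O(|y|^2/t)=o(t)$ under $|y|\le o(1)t$) to reduce to the classical Donsker--Varadhan statement for $\EE\exp\{\int_0^{\lambda t}F(B(s)+x_0)\,ds\}$, quoted from \cite{Chenbook} for the upper bound and \cite{Chetal15} for the lower bound, finally sending $\lambda\uparrow 1$ and $R\to\infty$. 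Your approach instead works directly with the Feynman--Kac kernel $q_t^F$ and the spectral theorem for $H_F=\frac12\Delta+F$, which makes the argument self-contained and bypasses the external large-deviation input; the paper's method, on the other hand, is lighter on operator theory and meshes naturally with the probabilistic approximation machinery used throughout Section~\ref{sec:largedev}. One minor remark: your choice $\tau_t=\sqrt{\epsilon_t}\,t$ also needs $\tau_t t\to\infty$ so that the fixed-endpoint term $(R+|x_0|)^2/(2\tau_t t)$ vanishes, which could fail if $\epsilon_t$ decays extremely fast; this is harmless, since one may replace $\epsilon_t$ by $\max(\epsilon_t,t^{-1})$ (or take $\tau_t=\max(\sqrt{\epsilon_t}\,t,\sqrt t)$) without loss.
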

	\begin{proof}
		We note that $\cee(F)=\cee(F(\cdot+x_0))$. It suffices to show  {that}
		\begin{equation}\label{Fup}
			\limsup_{t\to\infty}\frac1t\log\sup_{|y|\le o(1)t} \EE \exp\left\{\int_0^t F\left(B_{0,t}(s)+x_0+\frac st y\right)ds \right\}\le \cee(F(\cdot+x_0))
		\end{equation}
		and
		\begin{equation}\label{Flow}
			\liminf_{t\to\infty}\frac1t\log\inf_{|y|\le o(1)t} \EE \exp\left\{\int_0^t F\left(B_{0,t}(s)+x_0+\frac st y\right)ds \right\}\ge \cee(F(\cdot+x_0))\,.
		\end{equation}

		\textit{Upper bound:} Fix $\lambda\in(0,1)$. From \eqref{id:density}, we see that
		\begin{align*}
			\EE\exp\lt\{\int_0^{\lambda t}F(B_{0,t}(s)+x_0+\frac st y)ds \rt\}
			\le (1- \lambda)^{-\frac\ell2}\EE\exp\lt\{\int_0^{\lambda t}F(B(s)+x_0)ds+{\frac{|y|^2}{2t} }\rt\},
		\end{align*}
		where $B$ is a Brownian motion. It is  {obvious} that
		\begin{equation*}
			\EE\exp\lt\{\int_{\lambda t}^tF(B_{0,t}(s)+x_0+\frac st y)ds \rt\}\le e^{\|F\|_\infty (1- \lambda)t}.
		\end{equation*}
		Together with \cite{Chenbook}*{Theorem 4.1.6}, we have
		\begin{align*}
			\limsup_{t\to\infty}\frac1t\log&\sup_{|y|\le o(1)t}\EE\exp\lt\{\int_0^{t}F(B_{0,t}(s)+x_0+\frac st y)ds \rt\}
			\\&\le (1- \lambda)\|F\|_\infty+	\limsup_{t\to\infty}\frac1t\log\EE\exp\lt\{\int_0^{\lambda t}F(B(s)+x_0)ds \rt\}
			\\&=(1- \lambda)\|F\|_\infty+\lambda \cee(F(\cdot+x_0)).
		\end{align*}
		By sending $\lambda\to1^-$, we obtain \eqref{Fup}.

		\textit{Lower bound:} Fix $\lambda\in(0,1)$ and $R>0$. Let $\{B(s);s\ge0\}$ be a Brownian motion and $A$ be the event  {$\{ \sup_{0\le s\le \lambda t}|B(s)|\le R\}$.}  Again from \eqref{id:density} we see that
		\begin{multline*}
			\EE\exp\lt\{\int_0^{\lambda t}F(B_{0,t}(s)+x_0+\frac st y)ds \rt\}
			\\\ge (1- \lambda)^{-\frac\ell2}\EE \lt[\mathbf{1}_{A}\exp\lt\{\int_0^{\lambda t}F(B(s)+x_0)ds+{\frac{|y|^2}{2t}-\frac{|y-B(\lambda t)|^2}{2t(1- \lambda)}}\rt\}\rt].
		\end{multline*}

		On the event $A$, we have for every $|y|\le o(1)t$, 
		\begin{equation*}
			{\frac{|y|^2}{2t}-\frac{|y-B(\lambda t)|^2}{2t(1- \lambda)}} \ge -o(1)C(R,\lambda,|x_0|) {t}
		\end{equation*}
		for some deterministic positive constant $C(R,\lambda,|x_0|)$ depend only on $R$, $\lambda$ and $|x_0|$. We also note that
		\begin{equation*}
			 \EE\exp\lt\{{\int_{\lambda t}^{t}}F(B_{0,t}(s)+x_0+\frac st y)ds \rt\}\ge e^{-\|F\|_\infty(1- \lambda) t}.
		\end{equation*}
		Upon combining these estimates together with the argument of \cite{Chetal15}*{Proposition 3.1}, it follows that
		\begin{align*}
			\liminf_{t\to\infty}\frac1t\log&\inf_{|y|\le o(1)t} \EE\exp\lt\{\int_0^{ t}F(B_{0,t}(s)+x_0+\frac st y)ds \rt\}
			\\&\ge-\|F\|_\infty(1- \lambda)+ \liminf_{t\to\infty}\frac1t\log\EE \lt[\mathbf{1}_A\exp\lt\{\int_0^{\lambda t}F(B(s)+x_0)ds \rt\}\rt]
			\\&\ge-\|F\|_\infty(1- \lambda)+ \lambda\sup\lt\{\int_{D} F(x+x_0)g^2(x) dx-\frac12\int_{D}|\nabla g(x)|^2dx \rt\}
		\end{align*}
		where $D=\{x\in\RR^\ell:|x|\le R\}$ and the supremum is taken over all smooth functions $g$ on $D$ such that $\|g\|_{L^2(D)}=1$ and $g\big|_{\partial D}=0$. By sending $\lambda\to 1^-$ and $R\to\infty$, we arrive at \eqref{Flow}.
	\end{proof}

	We recall that, for any $\ep>0$, $\gamma_\ep$ is the bounded covariance function defined in \eqref{eqn:ge}.
	\begin{lemma}\label{lem:ggep} Suppose that the spectral measure $\mu$ satisfies  hypotheses  \ref{H1}  or \ref{H2}. Then for every $a \in \RR$, $x_0=(x_0^1,\dots,x_0^n)\in(\RR^\ell)^n$,
		\begin{multline*}
			\lim_{\epsilon\downarrow0}\limsup_{t\to\infty}\frac1t\log\sup_{y^j\in\RR^\ell\,;j=1,\dots,n}
			\\\EE\exp\left\{a \int_0^t  \sum_{1\le j<k \le n}(\gamma- \gamma_ \epsilon)(B_{0,t}^j(s) -B_{0,t}^k(s)+(x^j_0-x^k_0) +\frac st(y^j-y^k))ds\right\}\le0.
		\end{multline*}
	\end{lemma}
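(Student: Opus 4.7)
The plan is to combine three reductions: a symmetrization to eliminate the shifts, H\"older's inequality to reduce to a single pair of Brownian bridges, and a Taylor-expansion moment bound on the spectral side controlled by $(1-e^{-\ep|\xi|^2})$.

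First, observe that the difference $\gamma-\gamma_\ep$ has spectral measure $(1-e^{-\ep|\xi|^2})\mu(d\xi)$, which is a nonnegative tempered measure. The proof of Lemma \ref{lem:gamma} uses only the Fourier representation \eqref{eqn:gamspec} together with the strict positivity of the characteristic function of the relevant Gaussian vectors; the boundedness of $\gamma$ is not actually needed once one knows that the expectations at hand are finite. Since Proposition \ref{prop:expbridge} supplies uniform exponential integrability, the series arising from the moment expansion are absolutely summable for $\gamma-\gamma_\ep$ as well, and the argument of Lemma \ref{lem:gamma} applies verbatim with $\gamma$ replaced by $\gamma-\gamma_\ep$. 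This yields
\begin{align*}
&\sup_{y^1,\dots,y^n}\EE\exp\Bigl\{a\!\int_0^t\!\sum_{j<k}(\gamma-\gamma_\ep)\bigl(B_{0,t}^j(s)-B_{0,t}^k(s)+(x_0^j-x_0^k)+\tfrac{s}{t}(y^j-y^k)\bigr)ds\Bigr\}\\
&\le\EE\exp\Bigl\{|a|\!\int_0^t\!\sum_{j<k}(\gamma-\gamma_\ep)\bigl(B_{0,t}^j(s)-B_{0,t}^k(s)\bigr)ds\Bigr\}.
\end{align*}
H\"older's inequality, the identity in law $B_{0,t}^j-B_{0,t}^k\stackrel{\rm law}{=}\sqrt{2}\,B_{0,t}$, and exchangeability of the bridges then bound the right-hand side by $\EE\exp\{c\int_0^t(\gamma-\gamma_\ep)(\sqrt{2}\,B_{0,t}(s))\,ds\}$ with $c:=|a|\binom{n}{2}$, reducing the problem to the scalar statement
\[
\lim_{\ep\downarrow 0}\limsup_{t\to\infty}\frac{1}{t}\log\EE\exp\Bigl\{c\int_0^t(\gamma-\gamma_\ep)\bigl(\sqrt{2}\,B_{0,t}(s)\bigr)ds\Bigr\}\le 0.
\]

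To establish this scalar estimate, I would Taylor-expand the exponential and bound each moment on the spectral side, exactly as in the derivation of \eqref{eqn:2.int.form}. Writing $W_\ep(t)$ for the scalar integral above, one obtains
\[
\EE W_\ep(t)^d=d!\!\int_{[0,t]_<^d}\!\int_{\RR^{d\ell}}\!\exp\bigl(Q_t(\bs,\bm{\xi})\bigr)\prod_{j=1}^d(1-e^{-\ep|\xi^j|^2})\mu(d\xi^j)\,d\bs,
\]
where $Q_t$ is the explicit negative semidefinite quadratic form coming from the covariance of the Brownian bridge. Combining Lemma \ref{Lemma:Conv chaos} of the appendix with the decomposition $\RR^\ell=\{|\xi|\le 1\}\cup\{|\xi|>1\}$, using $(1-e^{-\ep|\xi|^2})\le\ep|\xi|^2$ on the first set and $(1-e^{-\ep|\xi|^2})\le 1$ together with Dalang's condition \eqref{k5} on the second, dominated convergence produces an estimate $\EE W_\ep(t)^d\le(K_\ep\,t)^d$ for some constant $K_\ep$ satisfying $K_\ep\to 0$ as $\ep\downarrow 0$. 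Summing the Taylor series gives $\EE\exp\{cW_\ep(t)\}\le\exp(cK_\ep t)$, so the $\limsup$ in $t$ is at most $cK_\ep$, which vanishes as $\ep\to 0$.

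The main obstacle is the uniform-in-$d$ control of the moments with the correct vanishing in $\ep$. The delicate point is to organize the time-ordered $(\bs,\bm{\xi})$ integration so that each spectral cutoff factor $(1-e^{-\ep|\xi^j|^2})$ can be integrated against the Brownian-bridge Gaussian kernel and the small factor $K_\ep$ extracted uniformly in $d$; this is exactly the mechanism encoded in the chaos convergence lemma of the appendix.
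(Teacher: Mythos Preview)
Your reduction via Lemma~\ref{lem:gamma} and H\"older's inequality matches the paper exactly. The divergence is in the final step, where you attempt a direct moment bound $\EE W_\ep(t)^d \le (K_\ep t)^d$ with $K_\ep \to 0$ uniformly in $d$, and here there is a genuine gap.

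Your appeal to Lemma~\ref{Lemma:Conv chaos} does not produce this bound. That lemma yields $\sum_n I_n(t) \le c_1 e^{c_2 t}$ with constants depending on the spectral measure, but nothing in its proof shows that $c_2 \to 0$ when $\mu$ is replaced by $(1-e^{-\ep|\xi|^2})\mu(d\xi)$. More seriously, under~\ref{H2} the proof of Lemma~\ref{Lemma:Conv chaos} relies on the nonnegativity of $\gamma$ (the positivity trick that decouples the partial-sum variables $\eta^j$), and $\gamma-\gamma_\ep$ is in general not a nonnegative function even when $\gamma$ is. Under~\ref{H1}, the modified density $(1-e^{-\ep\xi^2})f(\xi)$ need not satisfy the structural condition~\eqref{eq:H11} with an $\ep$-independent constant. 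So the ``mechanism encoded in the chaos convergence lemma'' does not transfer to $\gamma-\gamma_\ep$, and extracting a uniform-in-$d$ small factor from each $(1-e^{-\ep|\xi^j|^2})$ is precisely the hard part, since the bridge Gaussian kernel couples the $\xi^j$ through the partial sums $\eta^j=\xi^1+\cdots+\xi^j$.

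The paper circumvents this entirely by invoking subadditivity (Lemma~\ref{lem:Sub add BB}): the map $t\mapsto\log\EE\exp\{a\int_0^t(\gamma-\gamma_\ep)(B_{0,t}^1(s)-B_{0,t}^2(s))\,ds\}$ is subadditive, so $\limsup_{t\to\infty}\frac1t\log(\cdots)$ is bounded by $\log\EE\exp\{a\int_0^1(\gamma-\gamma_\ep)(B_{0,1}^1-B_{0,1}^2)\,ds\}$. This reduces the large-$t$ question to showing that the expectation at the fixed time $t=1$ converges to $1$ as $\ep\to 0$, which follows from termwise dominated convergence in the Taylor expansion. The subadditivity step is the key idea missing from your proposal.
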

	\begin{proof}
	Thanks to Lemma \ref{lem:gamma},	we can assume that $a>0$ and $x=y=0$. In addition, by applying H\"older inequality, we can assume that $n=2$. By subadditivity (Lemma \ref{lem:Sub add BB}), we see that
		\begin{align*}
			\EE\exp\left\{a \int_0^t  (\gamma- \gamma_ \epsilon)(B_{0,t}^1(s)-B_{0,t}^2(s))ds\right\}
			\le C\lt(\EE\exp\left\{a \int_0^1  (\gamma- \gamma_ \epsilon)(B_{0,1}^1(s)-B_{0,1}^2(s))ds\right\}\rt)^{\lfloor t\rfloor}\,,
		\end{align*}
		where $\lfloor t\rfloor$ is the integer part of $t$ and
		\begin{align*}
			C=\sup_{\epsilon>0} \sup_{0\le s<1}\EE\exp\left\{a \int_0^s  (\gamma- \gamma_ \epsilon)(B_{0,s}^1(r)-B_{0,s}^2(r))dr\right\}\,.
		\end{align*}
		We observe that by similar computations to \eqref{eqn:d.int.form},
		\begin{equation*}
			C\le \sup_{0\le s<1}\EE\exp\left\{a \int_0^s  \gamma(B_{0,s}^1(r)-B_{0,s}^2(r))dr\right\}\,.
		\end{equation*}
		The term on the right is finite by Lemma \ref{Lemma:Conv chaos}. 
		To complete the proof, it remains to show that
		\begin{align}\label{tmp:aggep}
			\lim_{\epsilon\downarrow0} \EE\exp\left\{a \int_0^1  (\gamma- \gamma_ \epsilon)(B_{0,1}^1(s)-B_{0,1}^2(s))ds\right\}=0\,.
		\end{align}
		Indeed, by Taylor expansion and Tonelli's theorem
		\begin{align*}
			\EE\exp\left\{a \int_0^1  (\gamma- \gamma_ \epsilon)(B_{0,1}^1(s)-B_{0,1}^2(s))ds\right\}
			=\sum_{d=0}^\infty \frac1{d!}\EE\left(a\int_0^1  (\gamma- \gamma_ \epsilon)(B_{0,1}^1(s)-B_{0,1}^2(s))ds\right)^d.
		\end{align*}
		From the proof of Proposition \ref{prop:expbridge}, it follows that that the above series converges uniformly in $\epsilon$,  because the spectral measure of $\gamma-\gamma_\epsilon$ is uniformly  bounded by the spectral measure of $\gamma$, 		
		and that for each $d$, the expectation
		\begin{equation*}
			\EE\left(a\int_0^1  (\gamma- \gamma_ \epsilon)(B_{0,1}^1(s)-B_{0,1}^2(s))ds\right)^d
		\end{equation*}
		converges to 0 as $\epsilon$ tends to 0.  Hence, an application of dominated convergence theorem  yields \eqref{tmp:aggep}. 
	\end{proof}
 	\begin{lemma}\label{lem:e epsilon finite}
		Under the assumptions  \ref{H1} or \ref{H2}, 
		\begin{equation}\label{est:En}
		{0\le \inf_{\epsilon >0} \cee_n(\gamma_{\epsilon})\leq \sup_{\epsilon > 0}\cee_n(\gamma_{\epsilon})<\infty.}
		\end{equation}
	\end{lemma}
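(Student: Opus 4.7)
The plan is to establish the two nontrivial inequalities in \eqref{est:En} separately. For the lower bound $\inf_{\epsilon>0}\cee_n(\gamma_\epsilon) \ge 0$, I would test the supremum in \eqref{v5} against a spread-out product trial function. Fix a real $\phi \in \mathcal{S}(\RR^\ell)$ with $\|\phi\|_{L^2(\RR^\ell)} = 1$ and, for each $R>0$, set
\[
g_R(x^1,\dots,x^n) := R^{-n\ell/2}\prod_{i=1}^n \phi(x^i/R) \in \cf_{n\ell}.
\]
A direct change of variables gives $\int_{\RR^{n\ell}} |\nabla g_R|^2 dx = n R^{-2}\int_{\RR^\ell} |\nabla \phi|^2 dx$, which tends to $0$ as $R\to\infty$, while Fubini's theorem together with \eqref{eqn:ge} yields, for each pair $j<k$,
\[
\int_{\RR^{n\ell}} \gamma_\epsilon(x^j-x^k) g_R^2(x) dx = (2\pi)^{-\ell}\int_{\RR^\ell} |\mathcal{F}(\phi^2)(R\xi)|^2 e^{-\epsilon|\xi|^2}\mu(d\xi) \ge 0,
\]
since the integrand is pointwise nonnegative and $\mu$ is a nonnegative measure. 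Plugging this into \eqref{v5} and sending $R\to\infty$ shows that $\cee_n(\gamma_\epsilon) \ge 0$ for every $\epsilon > 0$.

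For the upper bound $\sup_{\epsilon > 0}\cee_n(\gamma_\epsilon) < \infty$, the plan is to identify $\cee_n(\gamma_\epsilon)$ with a Feynman--Kac growth rate and then bound that rate uniformly in $\epsilon$. Because $\Gamma_\epsilon(x) = \sum_{j<k}\gamma_\epsilon(x^j-x^k)$ is bounded and continuous on $\RR^{n\ell}$, Lemma \ref{lem:Vupper} (applied with its $n$ replaced by $n\ell$, $F = \Gamma_\epsilon$, $x_0 = 0$, $y = 0$) yields
\[
\cee_n(\gamma_\epsilon) = \lim_{t\to\infty}\frac{1}{t}\log \EE\exp\Bigl\{\int_0^t \Gamma_\epsilon(B_{0,t}(s))ds\Bigr\},
\]
where $B_{0,t}^1,\dots,B_{0,t}^n$ are independent $\RR^\ell$-valued Brownian bridges on $[0,t]$. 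It therefore suffices to exhibit constants $c_1, c_2$, independent of $\epsilon > 0$ and $t \ge 1$, such that $\EE\exp\{\int_0^t \Gamma_\epsilon(B_{0,t}(s))ds\} \le c_1 e^{c_2 t}$. I would obtain this by Taylor-expanding the exponential and bounding each moment as in the proof of Proposition \ref{prop:expbridge}: the pointwise majorization $e^{-\epsilon|\xi|^2}\mu(d\xi) \le \mu(d\xi)$ replaces $\mu_\epsilon$ by $\mu$ in every resulting integral, and Lemma \ref{Lemma:Conv chaos} in the appendix supplies finite $\mu$-majorants. Linear-in-$t$ exponential growth is then extracted by combining the uniform single-unit estimate \eqref{k9} with the subadditivity of Lemma \ref{lem:Sub add BB}, exactly as at the start of the proof of Lemma \ref{lem:ggep}.

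The step requiring most care is the $\epsilon$-uniformity of this exponential moment bound. The crude constant $\|\Gamma_\epsilon\|_\infty = \binom{n}{2}\gamma_\epsilon(0)$ diverges as $\epsilon \downarrow 0$ whenever $\mu$ is infinite, so any approach that passes through $\|\Gamma_\epsilon\|_\infty$ is doomed. The crucial observation is that every Fourier-analytic estimate in the proofs of Proposition \ref{prop:expbridge} and Lemma \ref{Lemma:Conv chaos} is controlled purely in terms of the full tempered measure $\mu$, which dominates all the regularized measures $\mu_\epsilon$ simultaneously, so the bounds pass uniformly to every $\gamma_\epsilon$.
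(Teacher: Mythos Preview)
Your argument is correct, and both halves take a somewhat different route from the paper.

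For the lower bound, the paper works probabilistically: it invokes the Feynman--Kac identification $\cee_n(\gamma_\epsilon)=\lim_{t\to\infty}t^{-1}\log\EE\exp\{\int_0^t\sum_{j<k}\gamma_\epsilon(B^j_s-B^k_s)ds\}$ (Brownian motions, via \cite{Chenbook}), applies Jensen to push the expectation inside the exponential, and observes that $\EE\int_0^t\sum_{j<k}\gamma_\epsilon(B^j_s-B^k_s)ds\ge0$ by the argument of Lemma~\ref{lem:gamma}. Your direct variational test with the dilated product function is more elementary and self-contained: it needs neither the Feynman--Kac identification nor any probabilistic input, only the positivity of the spectral measure.

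For the upper bound, the paper again uses Brownian motions. It first shows, by the Markov property and Lemma~\ref{Lemma:Conv chaos}, that $t\mapsto\log\EE\exp\{\int_0^t\sum_{j<k}\gamma(B^j_s-B^k_s)ds\}$ is finite and subadditive, so that the limit $\Lambda$ exists and is finite; then it dominates the $\gamma_\epsilon$-functional by the $\gamma$-functional via the spectral majorization (exactly your key observation), giving $\cee_n(\gamma_\epsilon)\le\Lambda$ for all $\epsilon>0$. Your version routes through Brownian bridges via Lemma~\ref{lem:Vupper} instead, but the heart of the matter---replacing $\mu_\epsilon$ by $\mu$ and appealing to Lemma~\ref{Lemma:Conv chaos} for an $\epsilon$-free bound---is the same. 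One minor redundancy: once you have Taylor-expanded and invoked Lemma~\ref{Lemma:Conv chaos}, you already have the bound $c_1e^{c_2t}$ for all $t>0$, so the separate appeal to \eqref{k9} plus subadditivity (Lemma~\ref{lem:Sub add BB}) is not needed; either route alone suffices.
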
      
	\begin{proof}
		Applying Jensen inequality, we see that
		\begin{align*}
			\EE\exp\lt\{\int_0^t\sum_{1\le j<k\le n}\gamma_{\epsilon}(B_s^j-B_s^k)ds\rt\}
			\ge \exp\lt\{\EE\int_0^t\sum_{1\le j<k\le n}\gamma_{\epsilon}(B_s^j-B_s^k)ds \rt\}\,.
		\end{align*}
		By the same argument as in Lemma \ref{lem:gamma}, the later term is at least 1. The first inequality in \eqref{est:En} follows. The second inequality is trivial. We begin showing the last inequality in \eqref{est:En}. We first observe that by the Markov property of Brownian motion and Lemma \ref{Lemma:Conv chaos}, the map 
		$$t\mapsto\log\EE\exp\lt\{\int_0^t\sum_{1\le j<k\le n}\gamma(B_s^j-B_s^k)ds\rt\} $$ is finite and subadditive. As a result, the limit
		\begin{equation*}
			\lim_{t\to\infty}\frac1t\log\EE\exp\lt\{\int_0^t\sum_{1\le j<k\le n}\gamma(B_s^j-B_s^k)ds\rt\}
		\end{equation*}
		exists and is finite. We denote the limiting value as $\Lambda$. By standard result on Feynman-Kac functional (see for instance \cite{Chenbook}*{Proposition 4.1.6}), for each $\epsilon>0$ we have
		\begin{align*}
			\cee_n(\gamma_ \epsilon)=\lim_{t\to\infty}\frac1t\log\EE\exp\lt\{\int_0^t\sum_{1\le j<k\le n}\gamma_ \epsilon(B_s^j-B_s^k)ds\rt\}\,.
		\end{align*}
		On the other hand, reasoning as in Lemma \ref{lem:gamma}, we have
		\begin{align*}
			\EE\exp\lt\{\int_0^t\sum_{1\le j<k\le n}\gamma_ \epsilon(B_s^j-B_s^k)ds\rt\}\le \EE\exp\lt\{\int_0^t\sum_{1\le j<k\le n}\gamma(B_s^j-B_s^k)ds\rt\}\,.
		\end{align*}
		As a result, $\cee_n(\gamma_ \epsilon)\le \Lambda$ for all $\epsilon>0$, which concludes the proof.
	\end{proof}
	We need the following convergence result.

	\begin{proposition}  \label{prop:Efinite}
	Suppose that the spectral measure $\mu$ satisfies  hypotheses \ref{H1} or \ref{H2}. Then,
	\begin{eqnarray}  \label{y24}
&(i)&	\lim_{\ep \downarrow 0} \mathcal{E}_n(\gamma_\ep) = \mathcal{E}_n(\gamma)\,,\\
&(ii)& 0 \leq \mathcal{E}_n(\gamma) < \infty\,.
	\end{eqnarray}	
	\end{proposition}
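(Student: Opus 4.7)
The plan is to prove (i) first; (ii) will then follow immediately, since Lemma \ref{lem:e epsilon finite} provides the uniform bounds $0 \le \mathcal{E}_n(\gamma_\ep) \le \Lambda < \infty$ in $\ep>0$, where $\Lambda := \lim_{t \to \infty} \frac 1 t \log \EE\exp\{\int_0^t \sum_{j<k}\gamma(B^j_s-B^k_s)\,ds\}$ is the finite Feynman--Kac limit for independent Brownian motions $B^1,\dots,B^n$ from the proof of that lemma, and taking $\ep \downarrow 0$ in (i) transfers those bounds to $\mathcal{E}_n(\gamma)$.

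For the easier half of (i), namely $\mathcal{E}_n(\gamma) \le \liminf_{\ep \downarrow 0}\mathcal{E}_n(\gamma_\ep)$, I would test the variational functional on $h \in \mathcal{A}_{n\ell}$ arising from a Schwartz function. For such $h$, $h*h$ is Schwartz, so the sum $\sum_{j<k}|(h*h)(e_{jk}(\xi))|$ decays rapidly in $\xi \in \RR^\ell$ and is integrable against the tempered measure $\mu$. Since $e^{-\ep|\xi|^2} \le 1$ and converges pointwise to $1$ as $\ep \downarrow 0$, dominated convergence gives $F_\ep(h) \to F_0(h)$, where $F_\ep(h)$ denotes the expression inside the supremum in (\ref{v5a}); passing to the supremum over such $h$, which is dense in $\mathcal{A}_{n\ell}$ in the topology under which the variational functional is continuous, yields the bound.

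The reverse inequality $\limsup_{\ep \downarrow 0}\mathcal{E}_n(\gamma_\ep) \le \mathcal{E}_n(\gamma)$ is the main obstacle, since the integrand $(h*h)(e_{jk}(\xi))$ has no pointwise sign and $F_\ep$ is not manifestly monotone in $\ep$. My strategy is to sidestep this via the probabilistic identification from the proof of Lemma \ref{lem:e epsilon finite}, $\mathcal{E}_n(\gamma_\ep) = \lim_t \frac 1 t \log \EE\exp\{\int_0^t \Gamma_\ep(B_s)\,ds\}$; combined with Lemma \ref{lem:gamma} (with $y \equiv 0$ and with independent Brownian motions in place of the Brownian bridges, which is valid because $\gamma - \gamma_\ep$ has nonnegative spectral measure $(1-e^{-\ep|\xi|^2})\mu$), this yields $\mathcal{E}_n(\gamma_\ep) \le \Lambda$ uniformly in $\ep > 0$. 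To identify $\Lambda$ with $\mathcal{E}_n(\gamma)$ and close the loop, I would combine H\"older's inequality
\begin{equation*}
\EE \exp\lt\{\int_0^t \Gamma(B_s)\,ds\rt\} \le \EE\exp\lt\{p\int_0^t \Gamma_\ep(B_s)\,ds\rt\}^{1/p} \EE\exp\lt\{q\int_0^t (\Gamma-\Gamma_\ep)(B_s)\,ds\rt\}^{1/q},
\end{equation*}
for $p > 1$, $q = p/(p-1)$, with the Brownian-motion analog of Lemma \ref{lem:ggep} (proved by the same Taylor-expansion argument using the Markov subadditivity of Brownian motion in place of the Brownian-bridge subadditivity of Lemma \ref{lem:Sub add BB}) to bound the second factor by $o_\ep(1)$ after the $t$-limit. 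Letting $t \to \infty$, then $\ep \downarrow 0$ via the already-established lower bound of (i) applied to the scaled noise $p\gamma$ (which also satisfies \ref{H1} or \ref{H2}), and finally $p \downarrow 1$ by continuity of the variational functional in the coupling constant, yields $\Lambda \le \mathcal{E}_n(\gamma)$. Together with $\mathcal{E}_n(\gamma) \le \Lambda$, this completes (i), and part (ii) is then immediate from the uniform bounds of Lemma \ref{lem:e epsilon finite}.
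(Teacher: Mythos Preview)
Your argument for the inequality $\limsup_{\ep\downarrow 0}\mathcal{E}_n(\gamma_\ep)\le\mathcal{E}_n(\gamma)$ is circular. After H\"older and $t\to\infty$ you obtain, for each $\ep>0$,
\[
\Lambda\;\le\;\tfrac1p\,\mathcal{E}_n(p\gamma_\ep)+\text{(error vanishing as }\ep\downarrow 0).
\]
To conclude $\Lambda\le\tfrac1p\mathcal{E}_n(p\gamma)$ you need $\limsup_{\ep\downarrow 0}\mathcal{E}_n(p\gamma_\ep)\le\mathcal{E}_n(p\gamma)$, which is exactly the upper half of (i) for the scaled noise $p\gamma$. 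The ``already-established lower bound'' you invoke says $\mathcal{E}_n(p\gamma)\le\liminf_{\ep}\mathcal{E}_n(p\gamma_\ep)$, the wrong direction. Equivalently, identifying $\Lambda$ with $\mathcal{E}_n(\gamma)$ is precisely Proposition~\ref{prop:expBM}, whose proof (``same methodology'' as Proposition~\ref{prop:expgam}) \emph{uses} Proposition~\ref{prop:Efinite}; you cannot appeal to it here.

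The paper bypasses all of this with a one-line variational observation: if $h\in\mathcal{A}_{n\ell}$ then $|h|\in\mathcal{A}_{n\ell}$ as well, $|(h*h)(\cdot)|\le(|h|*|h|)(\cdot)$, and the gradient term is unchanged, so
\[
(2\pi)^{-\ell}\int_{\RR^\ell}\sum_{j<k}e^{-\ep|\xi|^2}(h*h)(e_{jk}(\xi))\,\mu(d\xi)-\tfrac12\int|\xi|^2|h|^2
\;\le\;
(2\pi)^{-\ell}\int_{\RR^\ell}\sum_{j<k}(|h|*|h|)(e_{jk}(\xi))\,\mu(d\xi)-\tfrac12\int|\xi|^2\,|h|^2.
\]
Taking suprema gives $\mathcal{E}_n(\gamma_\ep)\le\mathcal{E}_n(\gamma)$ for every $\ep>0$, with no probabilistic detour. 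The same $|h|$ trick also cleans up your lower-bound half: for arbitrary $h\in\mathcal{A}_{n\ell}$ (not just Schwartz), Lemma~\ref{lem:e epsilon finite} applied to $|h|$ plus monotone convergence shows $\int(|h|*|h|)(e_{jk}(\xi))\,\mu(d\xi)<\infty$, which furnishes a dominating function for DCT on $\int e^{-\ep|\xi|^2}(h*h)(e_{jk}(\xi))\,\mu(d\xi)$. This avoids the unjustified density/continuity claim in your sketch.
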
    
	
	\begin{proof}
We claim that the following alternative representation of $\mathcal{E}_n(\gamma_\ep)$ holds
	 \begin{equation}  \label{y23}
	  \mathcal{E}_n(\gamma_\ep)= \sup_{h\in \mathcal{A}_{n\ell}}   \left( (2\pi)^{-\ell}\int_{\RR^{\ell}} \sum_{1\le j<k\le n}  e^{-\ep |\xi|^2}  (h*h)(e_{jk}(\xi)) \mu(d\xi) - \frac 12
	  \int_{\RR^{n\ell}} |\xi |^2 |h(\xi)|^2 d\xi \right),
	  \end{equation}
where we	 recall  that 
	 \begin{equation}
	 \mathcal{A}_{n\ell}= \left \{ h: \RR^{n\ell} \rightarrow \mathbb{C} \Big| \| h\|^2 _{L^2(\RR^{n\ell})} =1,  \int_{\RR^{n\ell}} |\xi |^2 |h(\xi)|^2 d\xi  <\infty \ \text{and} \ \overline{h(\xi)} = h(-\xi)\right\}\,,
	 \end{equation}
	  and for each $1\le j<k\le n$ and $\xi \in \RR^{\ell}$, $e_{jk}(\xi):=(\xi^{1}_{jk},\dots,\xi^{n}_{jk} )$ is the vector in $\RR^{n\ell}$ with all $\xi^{i}_{jk}$, $1\leq i \leq n$ equal to $0$ except $\xi_{jk}^k=\xi$ and $\xi_{jk}^j =-\xi$.   
	To show (\ref{y23}), for any  $1\le j<k\le n$  and  any $g\in \mathcal{F}_{n\ell}$, we can write
	  \begin{eqnarray*}
	  \int_{\RR^{n\ell}} \gamma_\ep(x^j -x^k) g^2(x)dx &=& \frac1{(2 \pi)^{\ell}}  \int_{\RR^{n\ell}} \int_{\RR^{\ell}}  e^{-\ep |\xi|^2} e^{i\xi  \cdot (x^j-x^k)}  g^2(x )\mu(d\xi) dx \\
	  &=&\frac1{(2 \pi)^{\ell}}  \int_{\RR^{\ell}}   e^{-\ep |\xi|^2}     \mathcal{F}g^2(e_{jk}(\xi)) \mu(d\xi).
	  \end{eqnarray*} 
	  Let us put $h=\frac1{{(2 \pi)^{ n \ell /2}}}\cff g$. Then, (\ref{y23}) follows from
	  $\| \nabla g\| _{L^2(\RR^{n\ell})} ^2=  \int_{\RR^{n\ell}} |\xi |^2 |h(\xi)|^2 d\xi $ and $ \mathcal{F}g^2=  h*h$. We note that the convolution $h * h$ is a continuous function, so the quantity $(h*h)(e_{jk}(\xi))$ in (\ref{y23}) is well-defined.

	  We can now proceed to the proof of the convergence (\ref{y24}). First, notice that, for any $h\in \mathcal{A}_{n\ell}$, $|h|$ is also in $\mathcal{A}_{n\ell}$, thus
	 \begin{equation*}
	\sup_{\epsilon>0} (2\pi)^{-\ell} \int_{\RR^{\ell}}\sum_{1 \leq j < k \leq n} e^{-\epsilon |\xi|^2} (|h|* |h|)(e_{jk}(\xi)) \mu(d\xi) \leq  \frac{1}{2} \int_{\RR^{n\ell}} |\xi|^2 |h(\xi)|^2 d\xi + \sup_{\epsilon>0}\mathcal{E}_n(\gamma_{\epsilon})
	 \end{equation*} 
	 which is finite by Lemma \ref{lem:e epsilon finite}.
	Then, it follows from monotone convergence theorem that the integration
\begin{equation*}
 (2\pi)^{-\ell} \int_{\RR^{\ell}}\sum_{1 \leq j < k \leq n}(|h|* |h|)(e_{jk}(\xi)) \mu(d\xi)
\end{equation*}	  
is well-defined and finite for all $h\in\mathcal{A}_{n\ell}$. Hence, we can apply the  dominated convergence theorem to obtain
	  \begin{eqnarray*}
	  \lim_{\ep \downarrow 0} \mathcal{E}_n(\gamma_\ep)  &\ge&  \lim_{\ep \downarrow 0}   (2\pi)^{-\ell}
	   \int_{\RR^{\ell}} \sum_{1\le j<k\le n}  e^{-\ep |\xi|^2}  (h*h)(e_{jk}(\xi)) \mu(d\xi) - \frac 12 
	  \int_{\RR^{n\ell}} |\xi |^2 |h(\xi)|^2 d\xi  \\
	 & =&   (2\pi)^{-\ell}   \int_{\RR^{\ell}} \sum_{1\le j<k\le n}    (h*h)(e_{jk}(\xi)) \mu(d\xi) - \frac 12 
	  \int_{\RR^{n\ell}} |\xi |^2 |h(\xi)|^2 d\xi.
\end{eqnarray*}
	  The converse inequality is obvious, because for any $h\in \mathcal{A}_{n\ell}$, $|h|$ also belongs to $\mathcal{A}_{n\ell}$ and
	  \[
	    \int_{\RR^{\ell}}   e^{-\ep |\xi|^2}  (h*h)(e_{jk}(\xi)) \mu(d\xi) \le  \int_{\RR^{\ell}}     (|h|*|h|)(e_{jk}(\xi)) \mu(d\xi).
 \]
 This completes the proof of $(i)$.
 
 Finally, $(ii)$ is a direct consequence of $(i)$ and Lemma \ref{lem:e epsilon finite}.  
	     \end{proof}

	\begin{proposition}\label{prop:expgam}  
		Suppose that the spectral measure satisfies hypotheses    \ref{H1} or \ref{H2} and $B^i_{0,t}(s)$, $1 \leq i \leq n$, are $n$ independent $\ell$-dimensional Brownian bridges. Denote $B_{0,t}(s):= (B^1_{0,t}(s), \dots,B^n_{0,t}(s))$ and $x=(x^1,\dots,x^n), y=(y^1,\dots,y^n)\in(\RR^\ell)^n$. For each $M>0$, let $A_M$ be the set defined in Theorem \ref{thm:largetimemoment}.
		 Then for every $x\in(\RR^\ell)^n$,
		\begin{align}
			&\lim_{t\to\infty}\frac1t\log\sup_{y\in(\RR^\ell)^n} \EE \exp\left\{\int_0^t \sum_{1\le j<k\le n}\gamma \left(B_{0,t}^j(s)-B_{0,t}^k(s)+(x^j-x^k)+ \frac st( y^j-y^k)\right)ds \right\}
			\nonumber\\&=\lim_{t\to\infty}\frac1t\log\inf_{y\in A_M} \EE \exp\left\{\int_0^t \sum_{1\le j<k\le n}\gamma \left(B_{0,t}^j(s)-B_{0,t}^k(s)+(x^j-x^k)+\frac st( y^j-y^k)\right)ds \right\}
			\nonumber\\&=\cee_n(\gamma)\,.\label{eqn:limexpbri}
		\end{align}
	\end{proposition}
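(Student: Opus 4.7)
The plan is to prove the two halves of \eqref{eqn:limexpbri} separately: in each case I replace $\gamma$ by its bounded regularization $\gamma_\epsilon$, apply the large-deviation result of Lemma~\ref{lem:Vupper} (which requires a bounded continuous potential) to $\Gamma_\epsilon$, control the approximation error via Lemma~\ref{lem:ggep}, and pass to $\epsilon\downarrow 0$ using the continuity statement of Proposition~\ref{prop:Efinite}.

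\textbf{Upper bound.} For conjugate exponents $p>1$ and $q=p/(p-1)$, H\"older's inequality gives, pointwise in $y$,
\[
  \EE\exp\Bigl\{\int_0^t\Gamma(B_{0,t}(s)+x+\tfrac{s}{t}y)\,ds\Bigr\}\le\Bigl(\EE e^{p\int_0^t\Gamma_\epsilon}\Bigr)^{1/p}\Bigl(\EE e^{q\int_0^t(\Gamma-\Gamma_\epsilon)}\Bigr)^{1/q}.
\]
Taking the supremum in $y$ and using $\sup(fg)\le(\sup f)(\sup g)$, Lemma~\ref{lem:gamma} applied to the bounded positive definite covariance $p\gamma_\epsilon$ eliminates the shift from the first factor, which is then asymptotically evaluated by Lemma~\ref{lem:Vupper} with $F=p\Gamma_\epsilon$ to the rate $\cee_n(p\gamma_\epsilon)$. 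Lemma~\ref{lem:ggep} with $a=q$ makes the second factor's rate vanish as $\epsilon\downarrow 0$. Combining with the $1/p$ and $1/q$ powers, then sending $\epsilon\downarrow 0$ (invoking Proposition~\ref{prop:Efinite} applied to the spectral measure $p\mu$, which satisfies the same hypotheses as $\mu$) yields an upper bound $\cee_n(p\gamma)/p$; finally $p\downarrow 1^+$ gives $\cee_n(\gamma)$ by continuity of the convex function $p\mapsto\cee_n(p\gamma)$, which is a supremum of affine functions of $p$.

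\textbf{Lower bound.} Since $\Gamma$ depends only on the differences $y^j-y^k$, for $y\in A_M$ the translation $y^j\mapsto y^j-y^1$ reduces the problem to $|y^j|\le M$ for all $j$, so $|y|$ is bounded, which trivially satisfies the hypothesis $|y|\le o(1)t$ of Lemma~\ref{lem:Vupper}. Fix $\delta>0$ and introduce the good event $E_{t,\delta}=\{\int_0^t(\Gamma-\Gamma_\epsilon)\ge -\delta t\}$, on which $e^{\int\Gamma}\ge e^{-\delta t}e^{\int\Gamma_\epsilon}$, so that
\[
  \EE e^{\int\Gamma}\ge e^{-\delta t}\Bigl[\EE e^{\int\Gamma_\epsilon}-\EE\bigl(e^{\int\Gamma_\epsilon}\mathbf{1}_{E_{t,\delta}^c}\bigr)\Bigr].
\]
Cauchy-Schwarz bounds the bad-event term by $(\EE e^{2\int\Gamma_\epsilon})^{1/2}\PP(E_{t,\delta}^c)^{1/2}$; Chernoff's inequality with parameter $\lambda>0$ gives $\PP(E_{t,\delta}^c)\le e^{-\lambda\delta t}\EE e^{-\lambda\int(\Gamma-\Gamma_\epsilon)}$, whose last exponential moment is at most $e^{\eta t}$ uniformly in $y$ for any preassigned $\eta>0$ once $\epsilon$ is small enough, by Lemma~\ref{lem:ggep} with $a=-\lambda$. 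Since $\cee_n(\gamma_\epsilon)$ and $\cee_n(2\gamma_\epsilon)$ remain bounded (Proposition~\ref{prop:Efinite}), choosing $\lambda$ sufficiently large in terms of $\delta$, $\cee_n(\gamma)$, and $\cee_n(2\gamma)$ makes the bad-event term negligible relative to $\EE e^{\int\Gamma_\epsilon}$, so that $\EE e^{\int\Gamma}\ge\tfrac{1}{2}e^{-\delta t}\EE e^{\int\Gamma_\epsilon}$. Lemma~\ref{lem:Vupper} with $F=\Gamma_\epsilon$ produces the rate $\cee_n(\gamma_\epsilon)$ for the infimum over the reduced range of $y$; sending $\epsilon\downarrow 0$ and then $\delta\downarrow 0$ completes the argument.

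\textbf{Main obstacle.} The nontrivial part is the lower bound: there is no direct H\"older-type lower bound on $\EE e^{\int\Gamma}$ in terms of $\EE e^{\int\Gamma_\epsilon}$, since the difference $\Gamma-\Gamma_\epsilon$ has no definite sign (under \ref{H2}, $\gamma-\gamma_\epsilon$ is positive definite but need not be pointwise nonnegative). The good-event/Chernoff trade-off above circumvents this, but requires careful coordination of the three free parameters $(\delta,\lambda,\epsilon)$ in the right order and relies on the finiteness of $\cee_n(2\gamma)$ to absorb the second moment arising from Cauchy-Schwarz.
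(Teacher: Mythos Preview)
Your proof is correct. The upper bound is essentially identical to the paper's argument.

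For the lower bound, however, the paper takes a noticeably simpler route than your good-event/Chernoff scheme. Contrary to your assertion in the ``Main obstacle'' paragraph, there \emph{is} a direct H\"older-type lower bound: write $\tfrac{1}{p}\Gamma_\epsilon=\tfrac{1}{p}\Gamma-\tfrac{1}{p}(\Gamma-\Gamma_\epsilon)$ and apply H\"older to obtain
\[
\EE\exp\Bigl\{\tfrac{1}{p}\int_0^t\Gamma_\epsilon\Bigr\}
\le
\Bigl(\EE\exp\Bigl\{\int_0^t\Gamma\Bigr\}\Bigr)^{1/p}
\Bigl(\EE\exp\Bigl\{-\tfrac{q}{p}\int_0^t(\Gamma-\Gamma_\epsilon)\Bigr\}\Bigr)^{1/q}.
\]
Taking $\inf_y$ on the left and $\sup_y$ in the second factor on the right (both handled by Lemmas~\ref{lem:Vupper} and~\ref{lem:ggep} exactly as in the upper bound), one gets directly $\liminf_t\tfrac{1}{t}\log\inf_y\EE e^{\int\Gamma}\ge p\,\cee_n(p^{-1}\gamma_\epsilon)$, and then $\epsilon\downarrow 0$, $p\downarrow 1$. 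This ``reverse H\"older'' argument avoids the three-parameter $(\delta,\lambda,\epsilon)$ coordination, the Cauchy--Schwarz step, and the need to invoke $\cee_n(2\gamma)<\infty$; it is the exact mirror of the upper-bound computation. Your good-event decomposition works and is a robust technique in its own right, but here it is more machinery than the problem requires.
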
	
	\begin{proof} We recall that  $\Gamma$ and $\Gamma_ \epsilon$ are defined in  \eqref{k12}. By Lemma \ref{lem:gamma} and Proposition \ref{prop:expbridge}, it suffices to show
	\begin{equation}\label{Gup}
		\limsup_{t\to\infty}\frac1t\log\EE\exp\int_0^t \Gamma(B_{0,t}(s))ds\le \cee_n(\gamma)
	\end{equation}
	and
	\begin{equation}\label{Glow}
		\liminf_{t\to\infty}\frac1t\log\inf_{y\in A_M}\EE\exp\int_0^t \Gamma(B_{0,t}(s)+x+\frac st y)ds\ge \cee_n(\gamma)
	\end{equation}

	\textit{Upper bound:} For any $p,q>1$, $p^{-1}+q^{-1}=1$, applying H\"older inequality, we have
		\begin{multline*}
			\log\EE \exp\left\{\int_0^t \Gamma(B_{0,t}(s))ds\right\}
			\le \frac1p\log\EE\exp\left\{p\int_0^t \Gamma_ \epsilon(B_{0,t}(s))ds\right\}
			\\+\frac1q\log\EE\exp\left\{q\int_0^t (\Gamma- \Gamma_ \epsilon) (B_{0,t}(s))ds\right\}\,.
		\end{multline*}
		From Lemma \ref{lem:Vupper}, we see that 
		\begin{equation*}
			\lim_{\epsilon\downarrow0} \lim_{t\to\infty} \frac1t\log\EE\exp\left\{p\int_0^t \Gamma_ \epsilon(B_{0,t}(s))ds\right\}= \cee_n(p {\gamma_ \epsilon}).
		\end{equation*}
		Moreover, by Proposition \ref{prop:Efinite},
		\[
	{\lim_{\epsilon\downarrow0}\cee_n(p \gamma_ \epsilon)=\cee_n(p \gamma).}
		\]
		On the other hand, it follows from Lemma \ref{lem:ggep} that
		\begin{equation*}
			\lim_{\epsilon\downarrow0}\limsup_{t\to\infty} \frac1t\log\EE\exp\left\{q\int_0^t (\Gamma- \Gamma_ \epsilon) (B_{0,t}(s))ds\right\}\le0\,.
		\end{equation*}
		Altogether we obtain
		\begin{equation*}
			\limsup_{t\to\infty}\frac 1t\log\EE\exp\left\{\int_0^t \Gamma (B_{0,t}(s))ds\right\}\le \frac1p\cee_n(p \gamma)
		\end{equation*}
		for any $p>1$. {Since $\cee_n(p\gamma)$ is monotone in $p$ by its definition, we may send $p\downarrow 1$ to get \eqref{Gup}.} 

		\textit{Lower bound:} We follow a similar argument. From H\"older inequality, we have
		\begin{align*}
			&p\log\inf_{y\in A_M} \EE \exp\left\{\frac1p\int_0^t \Gamma_ \epsilon(B_{0,t}(s)+x+\frac st y)ds\right\}
			\\&\le \log\inf_{y\in A_M}\EE\exp\left\{\int_0^t \Gamma(B_{0,t}(s)+x+\frac st y)ds\right\}
			\\&\quad+\frac pq\log\sup_{y\in A_M}\EE\exp\left\{-\frac qp\int_0^t (\Gamma-\Gamma_ \epsilon) (B_{0,t}(s)+x+\frac st y)ds\right\}\,.
		\end{align*}
		Using  the previous arguments,  we obtain \eqref{Glow}.
		This completes the proof.
	\end{proof}
	With the same methodology, we have the following result whose proof is left to  the readers. 
	\begin{proposition}\label{prop:expBM}
		Suppose that the spectral density satisfies hypotheses   \ref{H1} or \ref{H2}.   Let $\{B^i_s,s\ge0\}$, $1\leq i \leq n$, be n independent Brownian motions in $\RR^{\ell}$. Then for every $y=(y^1,\dots,y^n)\in (\RR^{\ell})^n$,
		\begin{equation}
			\lim_{t\to\infty}\frac1t\log\EE \exp\left\{\int_0^t \sum_{1\le j<k\le n}\gamma \left( B_s^j- B_s^k+y^j-y^k\right)ds \right\}=\cee_n(\gamma)\,.
			\end{equation}
	\end{proposition}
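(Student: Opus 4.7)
The strategy is to mirror the proof of Proposition~\ref{prop:expgam}, with Brownian bridges replaced by independent Brownian motions, and with the bridge large-deviation input (Lemma~\ref{lem:Vupper}) replaced by the classical Donsker--Varadhan spectral formula for Brownian-motion Feynman--Kac functionals (e.g.\ \cite{Chenbook}*{Theorem 4.1.6}). The first step is to handle the smoothed covariance. For each fixed $\epsilon>0$, the potential $V_\epsilon(x):=\sum_{1\le j<k\le n}\gamma_\epsilon(x^j-x^k+y^j-y^k)$ is bounded and continuous on $(\RR^\ell)^n$, so
\[
\lim_{t\to\infty}\frac1t\log \EE\exp\int_0^t V_\epsilon(B_s)\,ds = \cee(V_\epsilon),
\]
where $\cee(V_\epsilon)$ is the variational quantity \eqref{v5} with $\sum_{j<k}\phi(x^j-x^k)$ replaced by $V_\epsilon$. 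The substitution $g(\cdot)\mapsto g(\cdot-y)$ in the variational formula identifies $\cee(V_\epsilon)=\cee_n(\gamma_\epsilon)$, independently of $y$.

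To pass from $\gamma_\epsilon$ to $\gamma$, I would use the same H\"older sandwich as in Proposition~\ref{prop:expgam}. For the upper bound, pick $p,q>1$ with $p^{-1}+q^{-1}=1$ and write
\[
\log\EE\exp\int_0^t\Gamma(B_s+y)\,ds \le \tfrac1p\log\EE\exp\int_0^t p\Gamma_\epsilon(B_s+y)\,ds + \tfrac1q\log\EE\exp\int_0^t q(\Gamma-\Gamma_\epsilon)(B_s+y)\,ds.
\]
Dividing by $t$ and sending $t\to\infty$, Step~1 applied to $p\gamma_\epsilon$ converts the first term into $p^{-1}\cee_n(p\gamma_\epsilon)$, which converges to $p^{-1}\cee_n(p\gamma)$ as $\epsilon\downarrow 0$ by Proposition~\ref{prop:Efinite}. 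The second term is handled by the Brownian-motion analog of Lemma~\ref{lem:ggep}: the Markov property of $B$ combined with the common-shift invariance of $\Gamma$ and Lemma~\ref{lem:gamma} (used to absorb the $y$-shift and the shifts arising from intermediate endpoints) yields submultiplicativity, hence
\[
\log\EE\exp\int_0^t q(\Gamma-\Gamma_\epsilon)(B_s+y)\,ds \le \lfloor t\rfloor\,\log\EE\exp\int_0^1 q(\Gamma-\Gamma_\epsilon)(B_s)\,ds + O(1),
\]
and a Taylor expansion analogous to \eqref{eqn:d.int.form BM}, combined with the dominated convergence theorem and the uniform moment bounds of Lemma~\ref{Lemma:Conv chaos}, shows $\EE\exp\int_0^1 q(\Gamma-\Gamma_\epsilon)(B_s)\,ds\to 1$ as $\epsilon\downarrow 0$. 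Sending $p\downarrow 1$ and using the monotonicity $p^{-1}\cee_n(p\gamma)\downarrow \cee_n(\gamma)$ produces the desired upper bound.

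For the matching lower bound I would apply H\"older in the reverse direction, starting from the identity $\exp\int\tfrac1p\Gamma_\epsilon = \exp\int\tfrac1p\Gamma\cdot\exp\int\tfrac1p(\Gamma_\epsilon-\Gamma)$, which gives
\[
p\log\EE\exp\int_0^t\tfrac1p\Gamma_\epsilon(B_s+y)\,ds \le \log\EE\exp\int_0^t\Gamma(B_s+y)\,ds + \tfrac{p}{q}\log\EE\exp\int_0^t\tfrac{q}{p}(\Gamma_\epsilon-\Gamma)(B_s+y)\,ds.
\]
Dividing by $t$, letting $t\to\infty$, then $\epsilon\downarrow 0$, and finally $p\downarrow 1$, together with the monotonicity $p\,\cee_n(p^{-1}\gamma)\uparrow\cee_n(\gamma)$, yields the matching lower bound $\cee_n(\gamma)$. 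The main technical step is the Brownian-motion version of Lemma~\ref{lem:ggep}; this is in fact slightly simpler than its bridge counterpart because the Markov property of $B$ combined with Lemma~\ref{lem:gamma} gives exact submultiplicativity directly, so the bridge-specific Lemma~\ref{lem:Sub add BB} is not needed.
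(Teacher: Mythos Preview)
Your proposal is correct and follows precisely the approach the paper intends: the paper does not give a proof of this proposition, stating only that ``with the same methodology'' as Proposition~\ref{prop:expgam} the result follows, and your argument carries out exactly that programme---replacing the bridge input Lemma~\ref{lem:Vupper} by the standard Donsker--Varadhan formula for Brownian motion (\cite{Chenbook}*{Theorem 4.1.6}), and replacing the bridge subadditivity Lemma~\ref{lem:Sub add BB} by the (simpler) Markov-property-based submultiplicativity for Brownian motion combined with Lemma~\ref{lem:gamma}. The only point worth making explicit is that the passage $p\downarrow 1$ at the end relies on continuity of $p\mapsto \cee_n(p\gamma)$ at $p=1$, which follows because this map is a supremum of affine functions of $p$ and hence convex; the paper is equally terse on this point in the proof of Proposition~\ref{prop:expgam}.
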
	
\section{Proof of Theorem \ref{thm:largetimemoment}}\label{sec:proof12}
We prove Theorem \ref{thm:largetimemoment} in this section. For the sake of conciseness, we assume that $\ell=1$. The case of higher dimension works out analogously. We recall the notation of $\Gamma$  and $\Gamma_{\epsilon}$  defined in \eqref{k12}.
	\begin{proof}[Proof of \eqref{est:upper}]
		For every fixed $\epsilon>0$ and $x\in\RR^n$, from Lemma \ref{lem:gamma}, we see that
		\begin{equation*}
			\EE\exp\left\{\int_0^t\Gamma_ \epsilon(B_{0,t}(s)+x)ds\right\}\le\EE\exp\left\{\int_0^t\Gamma_ \epsilon(B_{0,t}(s))ds\right\}\,.
		\end{equation*}
		By passing through the limit $\epsilon\downarrow0$, employing Proposition \ref{prop:expbridge}, we obtain
		\begin{equation*}
			\EE\exp\left\{\int_0^t\Gamma(B_{0,t}(s)+x)ds\right\}\le\EE\exp\left\{\int_0^t\Gamma(B_{0,t}(s))ds\right\}
		\end{equation*}
		for all $x\in\RR^n$. Hence, applying the previous inequality in \eqref{eqn:FKbridge} yields
		\begin{align}\label{est:G0}
		\EE\left [\prod_{j=1}^n u(t,x^j)\right]\le \prod_{j=1}^n (p_t*u_0)(x^j)
	\EE\exp\left\{\int_0^t\Gamma(B_{0,t}(s))ds\right\}\,.
		\end{align}
		It follows that
		\begin{equation}\label{eqn:nthbyV}
			\sup_{x\in\RR}\frac{\EE\left [\prod_{j=1}^n u(t,x^j)\right] }  {\prod_{j=1}^n (p_t*u_0)(x^j)} \le \EE\exp\left\{\int_0^t\Gamma(B_{0,t}(s))ds\right\}\,.
		\end{equation}
		Applying  Proposition \ref{prop:expgam} yields \eqref{est:upper}.
	\end{proof}
	
	The lower bound requires a little bit more work.
	\begin{proof}[Proof of \eqref{est:lower}] 
		We consider the map $\eta:\RR^n\to\RR^{n(n-1)/2}$, $\eta(x)=(x^j-x^k)_{1\le j<k\le n}$. 
		For every fixed $M'>0$, clearly $A_{M'}=\eta^{-1}([-M',M']^{n(n-1)/2})$. 
		Let $\delta$ be a positive number. Let $\{I_h\}_{h=1}^N$ be a finite disjoint partition of $[-M',M']^{n(n-1)/2}$ such that for each $1\le h\le N$, $I_h$ is nonempty  and $\mathrm{diag }(I_h)\le \delta$. 
		For each $h=1,\dots,N$, we choose an arbitrary $x_h=(x_h^j)_{j=1}^n$ in $\eta^{-1}(I_h)$.   It is useful to note that for every  $z\in\RR^n$, the expression
		\begin{equation*}
			\EE \exp\left\{\int_0^t \Gamma(B_{0,t}(s)+z)ds\right\}
		\end{equation*}
		depends only on $\eta(z)$.

		For each $\epsilon>0$, $p,q>1$ such that $p^{-1}+q^{-1}=1$, applying H\"older inequality, then Lemma \ref{lem:ggep}, we see that
		\begin{eqnarray*}
			\EE  \exp \left\{ \int_0^t \Gamma(B_{0,t}(s)+x+\frac st y)ds \right\}
			&\ge & \left(\EE  \exp \left\{  \frac1p\int_0^t \Gamma_ \epsilon(B_{0,t}(s)+x+\frac st y)ds \right\} \right)^p\\
			&& \times
			\left(\EE  \exp \left\{ -\frac qp\int_0^t(\Gamma -\Gamma_ \epsilon)(B_{0,t}(s)+x+\frac st y)ds \right\} \right)^{-\frac pq }   \\
			&\ge& \left(\EE  \exp \left\{  \frac1p\int_0^t \Gamma_ \epsilon(B_{0,t}(s)+x+\frac st y)ds \right\} \right)^p (\Phi_ \epsilon(t))^{-\frac pq},
		\end{eqnarray*}
		where $\Phi_ \epsilon(t)= \EE  \exp \left\{ \frac qp\int_0^t(\Gamma -\Gamma_ \epsilon)(B_{0,t}(s))ds\right\}$.
		For each $h=1,\dots,N$, $x\in \eta^{-1}(I_h)$, we have
		\begin{eqnarray*}
			& & \EE   \exp \left\{  \frac1p\int_0^t \Gamma_ \epsilon(B_{0,t}(s)+x+\frac st y)ds \right\}
			\\  & & \quad = \EE  \Bigg( \exp \left\{  \frac1p\int_0^t \Gamma_ \epsilon(B_{0,t}(s)+x_h+\frac st y)ds  \right\} \\
			&& \qquad \times \exp \left\{ \frac1p\int_0^t [\Gamma_ \epsilon(B_{0,t}(s)+x+\frac st y)-\Gamma_ \epsilon(B_{0,t}(s)+x_h+\frac st y)]ds \right \} \Bigg)
			\\&& \quad \ge   e^{-\frac1{p}\|\nabla\Gamma_ \epsilon\|_{\infty}\delta t} \EE  \exp \left\{\frac1p\int_0^t \Gamma_ \epsilon(B_{0,t}(s)+x_h+\frac st y)ds \right\}\,.
		\end{eqnarray*}
		Thus, for each $x\in A_{M'}$ and $y\in A_{M}$,
		\begin{multline*}
			\EE   \left\{ \exp  \int_0^t \Gamma(B_{0,t}(s)+x+\frac st y)ds \right\}
			\\\ge (\Phi_ \epsilon(t))^{-\frac pq }  e^{ - \frac 12 \|\nabla\Gamma_ \epsilon\|_{\infty}\delta t}
			\left(\inf_{1\le h\le N}\inf_{y\in A_M} \EE  \exp \left\{ \frac1p\int_0^t \Gamma_ \epsilon(B_{0,t}(s)+x_h+\frac st y)ds \right\}\right)^p
			 \,.
		\end{multline*}
		Hence, it follows from \eqref{eqn:FKbridge} that for every $x\in A_{M'}$
		\begin{align*}
			\EE \left[\prod_{j=1}^n u(t,x^j)\right]
			&\ge \int_{A_M}  \prod _{j=1}^n u_0(y^j)p_t(y^j-x^j) \EE\left(\exp\int_0^t \Gamma(B_{0,t}(s)+x+\frac st (y-x))ds\right)d y
			\\&\ge  (\Phi_ \epsilon(t))^{-\frac pq }e^{- \frac 12 \|\nabla\Gamma_ \epsilon\|_{\infty}\delta t} \\
			&\quad \times
			\min_{h\in\{1,\dots, N\}} \left(\inf_{y\in A_M}\EE  \exp \left\{ \frac1p\int_0^t \Gamma_ \epsilon(B_{0,t}(s)+x_h+\frac st (y-x_h))ds\right\}\right)^p\\
			&\quad\times
			\left(\int_{A_M}  \prod _{j=1}^n u_0(y^j)p_t(y^j-x^j) dy \right)\,.
		\end{align*}
		The above estimate yields
		\begin{multline*}
			\liminf_{t\to\infty}\frac1t\log\inf_{x\in A_{M'}} \frac{\EE\left[\prod_{j=1}^n u(t,x^j)\right]}{ \int_{A_M}  \prod _{j=1}^n u_0(y^j)p_t(y^j-x^j) dy}
			\\\ge \liminf_{t\to\infty}\frac pt\log \min_{h\in\{1,\dots, N\}}\inf_{y\in A_M}\EE  \exp \left\{ \frac1p\int_0^t \Gamma_ \epsilon(B_{0,t}(s)+x_h+\frac st (y-x_h))ds \right\}
			\\ -\frac pq \limsup_{t\to\infty}\frac1t\log \Phi_ \epsilon(t) - \frac 12 \|\nabla\Gamma_ \epsilon\|_{\infty}\delta \,.
		\end{multline*}
		On the other hand, for every fixed $M,M'$ and $\delta$,  $N,N'$ are fixed finite numbers, thus  applying Lemma \ref{lem:Vupper} we obtain
		\begin{equation*}
			\lim_{t\to\infty}\frac1t\log\min_{h\in\{1,\dots, N\}}\inf_{y\in A_M} \EE \exp  \left\{\frac1p\int_0^t \Gamma(B_{0,t}(s)+x_h+\frac st y)ds \right\}=\cee_n(p^{-1}\Gamma_ \epsilon)\,.
		\end{equation*}
		Therefore, for every $\epsilon,\delta>0$,
		\begin{multline*}
			\liminf_{t\to\infty}\frac1t\log\inf_{x\in A_{M'}} \frac{\EE \left[\prod
			_{j=1}^nu(t,x^j)\right]}{\int_{A_M}  \prod _{j=1}^n u_0(y^j)p_t(y^j-x^j) dy}
			\\\ge p\cee_n(p^{-1} \Gamma_ \epsilon)  -\frac pq \limsup_{t\to\infty}\frac1t\log \Phi_ \epsilon(t)- \frac 12 \|\nabla \Gamma_ \epsilon\|_\infty \delta\,.
		\end{multline*}
		Thanks to Lemma \ref{lem:ggep}, we can send $\delta\downarrow0$, then $\epsilon\downarrow0$, and finally $p\downarrow1$ to obtain \eqref{est:lower}.
	\end{proof}
	
\section{Speed of propagation of high peaks - Proof of Theorem \ref{thm:speed} } 
	\label{sec:propagation}

\begin{proof}[Proof of Theorem \ref{thm:speed}]
	We first show the upper bound  \eqref{est:lup}.	We observe that
	\begin{equation*}
		\sup_{|x|\ge \alpha t}\EE u^n(t,x)\le \left(\sup_{y\in\RR^\ell}\frac{\EE u^n(t,y)}{(p_t*u_0)(y)^n}\right) \left(\sup_{|x|\ge \alpha t}(p_t*u_0)(x)^n\right)\,.
	\end{equation*}
	It follows from \eqref{est:upper} that
	\begin{align*}
		\limsup_{t \to \infty} \frac{1}{t} \log \sup_{|x|\geq \alpha t} \EE u^n(t,x)
		&\le \cee_n(\gamma)+n\limsup_{t\to \infty}\frac1t\log\sup_{|x|\ge \alpha t} \int_{\RR^\ell}\exp(-\frac{|y-x|^2}{2t})u_0(y)dy\,.
	\end{align*}
	Let $\beta$ be a positive number such that $L:=\int_{\RR^\ell} e^{\beta|y|}u_0(y)dy$ is finite. Then, using the inequality
	\begin{equation*}
		\exp\{-\frac{|y-x|^2}{2t}-\beta|y| \}\le \exp\{\frac{\beta^2 t}2-\beta|x| \},
	\end{equation*}
	we get
	\begin{align*}
		\sup_{|x|\ge \alpha t} \int_{\RR^\ell}\exp(-\frac{|y-x|^2}{2t})u_0(y)dy
		\le L\exp\{\frac{\beta^2 t}2-\beta \alpha t \}
	\end{align*}
	and, hence,
	\begin{align*}
		\limsup_{t\to\infty}\frac1t {\log }\sup_{|x|\ge \alpha t} \int_{\RR^\ell}\exp(-\frac{|y-x|^2}{2t})u_0(y)dy
		\le \frac{\beta^2 }2-\beta \alpha \,.
	\end{align*}
	It follows that
	\begin{align*}
		\limsup_{t \to \infty} \frac{1}{t} \log \sup_{|x|\geq \alpha t} \EE u^n(t,x)
		&\le \cee_n(\gamma)+\frac{n\beta^2 }2-n\beta \alpha  \,.
	\end{align*}
	The above inequality shows
	\begin{equation*}
		\lambda^*(n) \le\frac{\cee_n(\gamma)}{n \beta}+\frac \beta2\,.
	\end{equation*}
	Since $\beta$ can be chosen arbitrarily so that $\int_{\RR^\ell} e^{\beta|y|}u_0(y)dy<\infty$, we derive \eqref{est:lup}.

	Next, we show \eqref{est:llow}. Fix $M>0$. We start with the estimate
	\begin{equation}\label{tmp:M1}
		\sup_{|x|\ge \alpha t}\EE  u^n(t,x)\ge \left(\inf_{x\in\RR ^\ell }\frac{\EE  u^n(t,x)}{\int_{A_M}\prod _{j=1}^n u_0(y^j)p_t(y^j-x) dy}\right) {\left(\sup_{|x|\geq \alpha t}\int_{A_M}\prod _{j=1}^n u_0(y^j)p_t(y^j-x) dy\right)\,.}
	\end{equation}
	Since $u_0$ is nontrivial, we can choose a positive number $K$ sufficiently large so that the integral $\int_{[-K,K]^\ell}u_0(y)dy$ is  nonzero. We then choose $M$ such that $A_M$ contains $([-K,K]^\ell)^n$. From \eqref{tmp:M1}, applying \eqref{est:lower}, we obtain
	\begin{equation*}
		\liminf_{t\to\infty}\frac1t\log\sup_{|x|\ge \alpha t}\EE u^n(t,x)\ge \cee_n(\gamma)+n\liminf_{t\to\infty}\frac 1t\log\sup_{|x|\ge \alpha t}\int_{[-K,K]^\ell} u_0(y)p_t(y- x)dy \,.
	\end{equation*}
	For every $|x|\ge \alpha t$ and $y\in [-K,K]^\ell$, {by triangle inequality}, $\exp\{-\frac{|y-x|^2}{2t}\}$ is at least $\exp\{-\frac{(K+\alpha t)^2}{2t}\}$,   
thus
	\begin{multline*} 
		\liminf_{t\to\infty}\frac 1t\log\sup_{|x|\ge \alpha t}\int_{[-K,K]^\ell} u_0(y)p_t(y- x)dy
		\\\ge \liminf_{t\to\infty}\frac 1t\log\left((2 \pi t)^{-\frac\ell2} e^{-\frac{(K+ \alpha t)^2}{2 t}} \int_{[-K,K]^\ell} u_0(y)dy\right)=-\frac{\alpha^2}2\,.
	\end{multline*}
	It follows that
	\begin{equation*}
		\liminf_{t\to\infty}\frac1t\log\sup_{|x|\ge \alpha t}\EE u^n(t,x)\ge \cee_n(\gamma)-\frac{n \alpha^2}2,
	\end{equation*}
	which in turn, implies \eqref{est:llow}. 

	Finally, we show the estimate \eqref{est:llowbeta}. We put $\bar \alpha=(\alpha ,0,\dots,0)\in\RR^\ell$.
	Applying \eqref{tmp:M1}, \eqref{est:lower} together with our current assumption on $u_0$, we obtain
	\begin{align*}
		\liminf_{t\to\infty}\frac{1}{t} \log\sup_{|x|\ge \alpha t}\EE u^n(t,x)\ge \cee_n(\gamma)+\liminf_{t\to\infty}\frac1t\log \left(\int_{A_M^+}\prod _{j=1}^n e^{-\beta|y^j|} p_t(y^j-\bar \alpha t ) dy \right)
	\end{align*}
	where we have set $A_M^+=\{y\in A_M:y^j\in [0,\infty)^\ell,\,\forall j=1,\dots,n\}$. Together with Lemma \ref{lem:ldev}, we have
	\begin{align*}
		\liminf_{t\to\infty}\frac{1}{t}\log\sup_{|x|\ge \alpha t}\EE   u^n(t,x)\ge \cee_n(\gamma)+
		\left\{
		\begin{array}{lll}
			\frac {n\beta^2}2- n\beta \alpha & \text{ if } & \alpha> \beta\\
			 -\frac {n\alpha^2}{2} & \text{ if } & \alpha\le \beta
			 \end{array}
		\right.\,.
	\end{align*}
	In the case $\beta^2< 2\cee_n(\gamma)/n$, for every $\alpha$ in $(\beta,\frac{\cee_n(\gamma)}{n \beta}+\frac \beta2)$ it follows
	\begin{equation*}
		\liminf_{t\to\infty}\frac{1}{t}\log\sup_{|x|\ge \alpha t}\EE u^n(t,x)>0\,.
	\end{equation*}
	This shows $\lambda_*(n)\ge\frac{\cee_n(\gamma)}{n \beta}+\frac \beta2$.
	In the case $\beta^2\ge 2\cee_n(\gamma)/n$, for every $\alpha< \sqrt{\frac{2 \cee_n(\gamma)}n} $ we can write
	\begin{equation*}
		\liminf_{t\to\infty} \frac{1}{t}\log\sup_{|x|\ge \alpha t}\EE  u^n(t,x)\ge \cee_n(\gamma)-\frac{n \alpha^2}2> 0\,.
	\end{equation*}
	This shows $\lambda_*(n)\ge \sqrt{\frac{2 \cee_n(\gamma)}n}$. Summarizing the two cases, we obtain \eqref{est:llowbeta}. This completes the proof.
\end{proof}		

We conclude the current section with a few observations. The proof of Theorem \ref{thm:speed} indeed yields more information. Let us consider two separate types of initial conditions.
\begin{enumerate}[leftmargin=0cm,itemindent=1cm,label=\bf{\arabic*.}]
	\item $u_0$ is nontrivial and compactly supported. Then, for every $\alpha>0$, we have
	\begin{equation}
		\lim_{t\to\infty}\frac1t\log\sup_{|x|\ge \alpha t}\EE u^n(t,x)=\cee_n(\gamma)-\frac{n \alpha^2}2.
	\end{equation}
	\item $u_0$ satisfies $c_1e^{-\beta|y|}\le u_0(y)\le c_2 e^{-\beta|y|}$ for some positive numbers $c_1,c_2$ and $\beta$. Then, for every $\alpha>0$, we have
	\begin{equation}
		\lim_{t\to\infty}\frac1t\log\sup_{|x|\ge \alpha t}\EE u^n(t,x)=\cee_n(\gamma)+
		\left\{
		\begin{array}{lll}
			\frac {n\beta^2}2- n\beta \alpha & \text{ if } & \alpha> \beta\\
			 -\frac {n\alpha^2}{2} & \text{ if } & \alpha\le \beta
			 \end{array}
		\right.\,.
	\end{equation}
\end{enumerate}

\section{Phase transition} 
\label{sec:phase}
	In the current section, we give the proofs of Proposition \ref{prop:phase}, and Theorems \ref{thm:front1} and \ref{thm:front2}. First, we need the following lemma. 
	\begin{lemma}\label{lem:epeq}
		For all integers $1< p\le q$, the following inequalities hold
		\begin{equation}\label{est:e1}
			\frac{\cee_p(\gamma)}{p}\le\frac{\cee_q(\gamma)}{q}\,,
		\end{equation}
 		\begin{equation}\label{est:e2}
			\frac{\cee_H((q-1) \gamma)}2\le \frac{\cee_q(\gamma)}q\le \frac{\cee_p({\frac{q-1}{p-1}}\gamma)}{p}\,,
		\end{equation}        
		where for every $\lambda>0$, $\cee_H(\lambda \gamma) $ is defined as
		\begin{equation*}
			\cee_H(\lambda\gamma)=\sup_{h\in\mathcal{A}_\ell}\left\{\lambda (2 \pi)^{ -\ell}\int_{\RR^\ell}|(h*h)(\xi)|^2 \mu(d \xi)-\int_{\RR^\ell}|h(\xi)|^2|\xi|^2 d \xi \right\}.
		\end{equation*}    
	\end{lemma}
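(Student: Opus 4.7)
Each inequality will be established independently using the variational representations of $\cee_n(\gamma)$ and $\cee_H$.

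Estimate~\eqref{est:e1} follows from Lyapunov's inequality applied to the moments of the stochastic heat equation with flat initial condition. Taking $u_0\equiv1$, the solution $u(t,0)$ is nonnegative by the remark at the end of Section~\ref{sec:FKformula}, so for $1<p\le q$ one has $(\EE u(t,0)^p)^{1/p}\le(\EE u(t,0)^q)^{1/q}$. Theorem~\ref{thm:u01} (specialized to $x^1=\cdots=x^n=0$) gives $\lim_{t\to\infty}t^{-1}\log\EE u(t,0)^n=\cee_n(\gamma)$; dividing the Lyapunov inequality by $pt$ and $qt$ respectively and letting $t\to\infty$ yields $\cee_p(\gamma)/p\le\cee_q(\gamma)/q$.

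For the lower bound in~\eqref{est:e2} I plan to insert the tensor-product ansatz $h(\xi^1,\dots,\xi^q)=\prod_{j=1}^q\tilde h(\xi^j)$ with $\tilde h\in\mathcal{A}_\ell$ into the Fourier variational formula~\eqref{id:ehat}. One checks $h\in\mathcal{A}_{q\ell}$ with $\|h\|_{L^2(\RR^{q\ell})}=1$, and the kinetic term collapses to $\tfrac{q}{2}\int_{\RR^\ell}|\xi|^2|\tilde h(\xi)|^2\,d\xi$. For each pair $(j,k)$, the conjugate-symmetry property $\overline{\tilde h(\eta)}=\tilde h(-\eta)$ causes the $q-2$ factors in $(h*h)(e_{jk}(\xi))$ corresponding to indices $i\ne j,k$ to reduce to $\|\tilde h\|_{L^2}^2=1$, while the two remaining factors combine into $|(\tilde h*\tilde h)(\xi)|^2$. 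The interaction sum therefore equals $\binom{q}{2}(2\pi)^{-\ell}\int|(\tilde h*\tilde h)(\xi)|^2\mu(d\xi)$; dividing by $q$ and taking the supremum over $\tilde h\in\mathcal{A}_\ell$ produces $\cee_q(\gamma)/q\ge\tfrac{1}{2}\cee_H((q-1)\gamma)$.

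For the upper bound in~\eqref{est:e2} I plan to work with the regularized covariance $\gamma_\epsilon$ of~\eqref{eqn:ge} (a bounded positive definite function, so the real-space variational formula~\eqref{v5} applies) and pass to the limit $\epsilon\downarrow0$ using Proposition~\ref{prop:Efinite}. A symmetrization argument first restricts the supremum defining $\cee_q(\gamma_\epsilon)$ to symmetric $g\in H^1(\RR^{q\ell})$: replacing $\rho:=g^2$ by its $S_q$-average preserves $\sum_{j<k}\int\gamma_\epsilon^{jk}\rho\,dx$ (since permutations reindex the set of unordered pairs) and cannot increase $I(\rho)=4\|\nabla\sqrt\rho\|^2$ by joint convexity of Fisher information, hence cannot decrease the objective. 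For symmetric $g$, the pointwise Cauchy--Schwarz inequality $|\int w\,dz|^2\le(\int\nu\,dz)(\int|w|^2/\nu\,dz)$ with $\nu(z)=\rho(x^1,\dots,x^p,z)$ and $w(z)=\nabla_{x^1}\rho(x^1,\dots,x^p,z)$, integrated in $x^1,\dots,x^p$, gives the Fisher-information sub-additivity
\begin{equation*}
\int_{\RR^{q\ell}}|\nabla_{x^1}g|^2\,dx\ge\int_{\RR^{p\ell}}|\nabla_{x^1}g_p|^2\,dx^1\cdots dx^p,\qquad g_p:=\sqrt{\rho_p},
\end{equation*}
where $\rho_p$ is the $p$-marginal of $\rho$. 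Using $\|\nabla g\|_{L^2}^2=q\int|\nabla_{x^1}g|^2\,dx$ by symmetry and the analogous identity for $g_p$, one deduces $\tfrac{1}{2}\|\nabla g\|^2\ge\tfrac{q}{2p}\|\nabla g_p\|^2$. For the interaction term, using $\int\gamma_\epsilon^{12}\rho\,dx=\int\gamma_\epsilon^{12}\rho_p\,dx^1\cdots dx^p$, the symmetry of $g_p$, and the combinatorial identity $\binom{q}{2}/\binom{p}{2}=\tfrac{q}{p}\lambda$ with $\lambda=(q-1)/(p-1)$, one rewrites $\sum_{j<k\le q}\int\gamma_\epsilon^{jk}g^2\,dx=\tfrac{q}{p}\sum_{j<k\le p}\int(\lambda\gamma_\epsilon^{jk})g_p^2\,dx^1\cdots dx^p$. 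Combining the two bounds, taking the supremum over $g_p$, and finally letting $\epsilon\downarrow0$ through Proposition~\ref{prop:Efinite} yields $\cee_q(\gamma)/q\le\cee_p(\lambda\gamma)/p$.

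The main technical step is the Fisher-information sub-additivity together with the verification that $g_p\in H^1(\RR^{p\ell})$ with $\|g_p\|_{L^2}=1$ belongs to the admissible class $\mathcal{F}_{p\ell}$; both are straightforward once the Cauchy--Schwarz bound is in place, since the Fisher-information bound automatically gives $\int|\nabla g_p|^2<\infty$ from $\int|\nabla g|^2<\infty$.
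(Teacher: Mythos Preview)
Your proofs of \eqref{est:e1} and of the lower bound in \eqref{est:e2} coincide with the paper's: H\"older (Lyapunov) on the moments together with Theorem~\ref{thm:u01}, and the tensor-product ansatz in \eqref{id:ehat}, respectively.

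For the upper bound in \eqref{est:e2} you take a genuinely different route. The paper proves it \emph{probabilistically}: using Mehler's formula one checks that the Ornstein--Uhlenbeck semigroup $P_\tau$ acts on the solution by $P_\tau u_\lambda = u_{e^{-2\tau}\lambda}$, and then the hypercontractivity inequality $\|P_\tau f\|_{q}\le\|f\|_p$ with $q=1+e^{2\tau}(p-1)$ gives, after taking $\frac1t\log$ and letting $t\to\infty$, exactly $\cee_q(\tfrac{p-1}{q-1}\lambda\gamma)/q\le\cee_p(\lambda\gamma)/p$. Your argument is \emph{variational}: symmetrize the density $g^2$ over $S_q$ (which preserves the interaction and can only decrease Fisher information by convexity), then compare the $q$-body functional to the $p$-body one through the marginal $g_p=\sqrt{\rho_p}$, using the Cauchy--Schwarz sub-additivity of Fisher information under marginalization and the combinatorial identity $\binom{q}{2}/\binom{p}{2}=\tfrac{q}{p}\cdot\tfrac{q-1}{p-1}$. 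Both approaches are correct and yield the same constant $\tfrac{q-1}{p-1}$; this is not accidental, since Nelson's hypercontractivity is equivalent to the Gaussian log-Sobolev inequality, whose infinitesimal form is precisely a Fisher-information bound. The paper's proof is shorter and exploits the SPDE structure directly, but requires the identification $P_\tau u_\lambda=u_{e^{-2\tau}\lambda}$; your argument stays entirely at the level of the variational formula \eqref{v5} and would apply verbatim to any functional of the form $\sup_g\{\sum_{j<k}\langle\phi^{jk},g^2\rangle-\tfrac12\|\nabla g\|^2\}$, without reference to the stochastic equation. Your use of Proposition~\ref{prop:Efinite} to pass $\epsilon\downarrow0$ (applied to the spectral measure $\lambda\mu$, which inherits \ref{H1} or \ref{H2}) is legitimate.
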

	\begin{proof}
		The estimate \eqref{est:e1} is obtained by applying H\"older's inequality together with \eqref{tmp:nmmt}.

 	The first inequality in \eqref{est:e2} is obtained by restricting the supremum in \eqref{id:ehat} to  the set $\{(x^1,\dots,x^n)\mapsto h(x^1)\cdots h(x^n): h\in\mathcal{A}_\ell\}$.    
		We will use hypercontractivity to show the second inequality in \eqref{est:e2}. Let $\{P_\tau\}_{\tau\ge0}$ denote the Ornstein-Uhlenbeck semigroup in the Gaussian space associated with the noise $W$.  For a bounded measurable function $f$ on $\RR^{\HH}$,   we have the following Mehler's formula
		\begin{equation*}
			P_\tau f(W) =\EE'f(e^{-\tau}W+\sqrt{1-e^{-2\tau}}W')\,,
		\end{equation*}
		where  $W'$  an independent copy of $W$, and $\EE'$ denotes the expectation with respect to $W'$. 
 For each $\tau\ge0$, let $u_{\tau,\lambda}$ be the solution to  equation \eqref{eqn:SHE} driven by the space-time Gaussian field $\sqrt{\lambda} (e^{-\tau}W+\sqrt{1-e^{-2\tau}}W')$, with initial condition $u_0=1$. That is,
		\begin{align*}
			u_{\tau,\lambda}(t,x)&=1
			\\&\quad+\sqrt\lambda\int_0^t\int_{\RR ^\ell} p_{t-s}(x-y)u_{\tau,\lambda}(s,y) (e^{-\tau}W(ds,dy)+\sqrt{1-e^{-2\tau}}W'(ds,dy))
			\,.
		\end{align*}
		From Mehler's formula we see that $P_\tau u_\lambda=\EE_{W'}[ u_{\tau,\lambda}]$ satisfies the equation
		\begin{equation*}
			P_\tau u_\lambda(t,x)=1+\sqrt\lambda e^{-\tau}\int_0^t\int_{\RR^\ell} p_{t-s}(x-y)P_\tau u_\lambda(s,y)W(ds,dy)\,.
		\end{equation*}
		In other words, $P_ \tau u_ \lambda$ is another solution of \eqref{eqn:ulambda}. By uniqueness, we conclude that $P_\tau u_\lambda=u_{e^{-2\tau}\lambda}$. On the other hand, it is well-known that the Ornstein-Uhlenbeck semigroup verifies the following hypercontractivity inequality
		\begin{equation}\label{ineq.hypercontractivity}
			\|P_{\tau}f\|_{q(\tau)}\le\|f\|_{p}
		\end{equation}
		for all $1<p<\infty$ and  $\tau\ge0$, where  $q(\tau)=1+e^{2 \tau}(p-1)$. Hence, applying the hypercontractivity property to our situation yields
		\begin{equation}\label{tmp:utl}
			\|u_{e^{-2\tau}\lambda}(t,x)\|_{q(\tau)}\le\|u_{\lambda}(t,x)\|_p
		\end{equation}
		for all $t>0,x\in\RR^\ell$ and $\tau\ge 0$. We now choose $\tau$ so that $q(\tau)=q$. Passing through the limit $t\to\infty$ in \eqref{tmp:utl}, employing \eqref{tmp:nmmt} and the relation $e^{-2\tau}=\frac{p-1}{q-1}$, we obtain
		\begin{equation*}
		 		\frac1q\cee_q(\frac{p-1}{q-1}\lambda \gamma)\le \frac1p\cee_p(\lambda \gamma),
		\end{equation*}
		which is clearly equivalent to \eqref{est:e2}.
	\end{proof}
	\begin{remark}
		(i) In \cite{B}, the author uses hypercontractivity on each Wiener chaos to obtain an upper bound for $p$-th moment of $u$. Comparing with the use of hypercontractivity in proving \eqref{est:e2}, these two methods certainly share the same flavor. Here, we applied hypercontractivity directly on the solution instead of on each Wiener chaos. Our method avoids some unnecessary constants and is therefore more transparent.

		(ii) It is interesting to observe that \eqref{est:e2} implies that $n\mapsto \frac{\cee_n(\frac{\gamma}{n-1})}n$ is nonincreasing. Thus, the limit $\lim_{n\to\infty}\frac{\cee_n(\frac{\gamma}{n-1})}n$ always exists. In fact, it is shown in \cite{ChPh15} that under our current assumptions (\ref{H1} or \ref{H2})
	\begin{equation*}
		\lim_{n\to\infty}\frac{\cee_n(\frac{\gamma}{n-1})}n=\frac{\cee_H(\gamma)}2\,.
	\end{equation*}
	Suppose in addition that $\mu$ has a scaling property: there exists $\alpha\neq 2$ such that $\mu(d(c \xi))=c^\alpha \mu(d \xi)$ for all $c>0$. This implies that $\cee_2$ and $\cee_H$ have  the scaling property $\cee_2(\lambda \gamma)=\lambda^{\frac{2}{2- \alpha}}\cee_2(\gamma)$ and  $\cee_H(\lambda \gamma)=\lambda^{\frac{2}{2- \alpha}}\cee_H(\gamma)$. Therefore, estimate \eqref{est:e2} (with $p=2$) becomes
	\begin{equation}\label{est:Ehen}
		\frac12q(q-1)^{\frac2{2- \alpha}}\cee_H(\gamma) \le \cee_q(\gamma)\le \frac12q(q-1)^{\frac2{2- \alpha}}\cee_2(\gamma)\,.
	\end{equation}

	(iii) Finally, let us mention that in other situations, which are not considered in the current paper, the Lyapunov exponent of $\EE u^n(t,x)$ can be computed explicitly. In fact, Chen  have recently proved in \cite{Chen}  that
	\begin{align}
			\lim_{t\to\infty}\frac 1{t^{\frac{4- \alpha-2 \alpha_0}{2- \alpha}}}\log \EE  u^n(t,x)= n \left(\frac {n-1}2  \right) ^{\frac 2{2-\alpha}} \mathcal{E},
		\end{align}
		assuming that $\ell\ge 1$, $W$ is fractional in time with covariance $\gamma_0(t)=|t|^{-\alpha_0}$ and in space, it has a covariance $\gamma$ which is a nonnegative function satisfying (\ref{k5}),  the scaling property $\gamma(cx)= c^{-\alpha} \gamma(x)$ for all $c>0$ and $x\in \RR^\ell$ and the structural property $\gamma(\cdot)=\int_{\RR^\ell} K(y-\cdot) K(y)dy$, where $K(\cdot) \ge 0$.  Here $\mathcal{E}$ is the variational constant
	\[
	\mathcal{E}= \sup_{g} \left(\int_0^1\int_0^1 \int_{\RR^{2\ell}}  \frac{\gamma(x-y)}{|s-r|^{\alpha_0}} g^2(s,x) g^2(r,y) dxdydrds - \frac 12
	\int_0^1\int_{\RR^\ell} | \nabla g(r,y)| ^2 dydr \right),
	\]
	where the supremum is taken over all $g$ such that $g(s,\cdot)\in H^1(\RR^\ell)$ and $\int_{\RR^\ell}g^2(s,x)dx=1$ for all $0\le s\le 1$. This result does not cover the case of white-in-time noise. Based on the results in the case of space-time white noise (identity \eqref{id:ceedelta}), Chen conjectures in \cite{Chen} that if the noise is white in time and has spatial covariance $\gamma$ satisfying the previously described conditions, then for every $n=1,2,\dots$,
	\begin{equation}\label{conjecture}
		\lim_{t\to\infty}\frac1t\log\EE u^n(t,x)=n(n^{\frac{2}{2- \alpha}}-1)\tilde\cee
	\end{equation}
	for some constant $\tilde \cee$. We believe that this is an interesting problem and the methods used in Lemma \ref{lem:epeq} are not capable to yield such a precise statement. 
	\end{remark}

	\begin{proof}[Proof of Proposition \ref{prop:phase}]
		(i) We observe that under assumption  \ref{H1} or  \ref{H2}, the result in Theorem \ref{thm:u01} gives
		\begin{equation}\label{tmp:nmmt}
			\cee_n(\lambda\gamma)=\lim_{t\to\infty}\frac1t\log\EE u^n_ \lambda(t,x) 	
		\end{equation}
	for every $\lambda>0$.  We can restrict the supremum in (\ref{id:ehat}) to nonnegative functions which shows that the map $\lambda\mapsto \cee_n(\lambda \gamma)$ is nondecreasing.  It follows that weak phase transition at order $n$ implies (strong) phase transition at the same order. 

	(ii) It  follows directly from Lemma \ref{lem:epeq}, identity \eqref{tmp:nmmt} and the fact that $\lambda\mapsto\cee_n(\lambda \gamma)$ is nondecreasing.

	(iii) It is evident that $\lambda^c_2=\sup\{\lambda>0:\cee_2(\lambda \gamma)=0\}$.  Let $\lambda$ be a positive number such that $\cee_2(\lambda \gamma)=0$. From the expression \eqref{y23} we see that
		\begin{equation}\label{tmp:lh}
			\lambda\le \frac{(2 \pi)^\ell\int_{\RR^{2\ell}}|\xi|^2 |h(\xi)|^2d \xi }{2  \int_{\RR^\ell} (h*h)(-\xi,\xi)\mu(d \xi) } 	
		\end{equation} 
		for all $h\in\mathcal A_{2\ell}$. Fix $s>0$. We choose $h(\xi^1,\xi^2)=\lt(\frac{2 s}{\pi}\rt)^{\frac\ell2} e^{-s (|\xi^1|^2+|\xi^2|^2)}$ for all $\xi^1,\xi^2\in\RR^\ell$, so that $(h*h)(-\xi,\xi)=e^{-s |\xi|^2}$. Hence
		\begin{equation*}
			\int_{\RR^\ell} (h*h)(-\xi,\xi)\mu(d \xi)=\int_{\RR^\ell} e^{-s |\xi|^2}\mu(d \xi)
		\end{equation*}
		and
		\begin{equation*}
			\iint_{\RR^{2\ell}}(|\xi^1|^2+|\xi^2|^2)h^2(\xi^1,\xi^2)d \xi^1d \xi^2=\frac{\ell}{2s}\,.
		\end{equation*}
		It follows from \eqref{tmp:lh} that
		\begin{equation*}
			\lambda\le \frac{\ell (2 \pi)^\ell}{4 \sup_{s>0}s\int_{\RR^\ell} e^{-s |\xi|^2}\mu(d \xi)}\,.
		\end{equation*}
		Passing through the limit $\lambda\uparrow \lambda^c_2$ we obtain the result. 

	Finally, (iv) is a consequence of (iii).
	\end{proof}
	We now prove Theorem \ref{thm:front1}. 
	\begin{proof}[Proof of Theorem \ref{thm:front1}]
		This result is essentially due to \cite{CKK}. However, since the set up in the afore mentioned paper is a little bit different from ours, we sketch the idea here.

		Theorem 1.3 of \cite{CKK} shows that weak phase transition at order 2 happens if and only if condition \eqref{cond:front1} is satisfied. Together with Proposition \ref{prop:phase}, this shows the first statement. It remains to show \eqref{est:lcabove}.
		Following \cite{CKK}, we put $$h_1(r,y)=\frac 1{(2 \pi)^\ell} \int_0^r \int_{\RR^\ell}e^{-\frac s2|\xi|^2}\mu(d \xi)e^{-\frac {4|y|^2}{s}}ds\,. $$
		A further inspection of the proof of Theorem 1.3 in \cite{CKK} reveals that for every $r>0$ and $a>0$, $\cee_2(\lambda \gamma)>0$ for all $\lambda$ such that
		\begin{equation*}
			\lambda\ge \frac{e}{h_1(r,a)}\,.
		\end{equation*} 
		Together with the estimate \eqref{est:e1}, we have
		\begin{equation*}
			\lambda_n^c\le \frac{e}{h_1(r,a)}
		\end{equation*}
		for all $r>0$ and $a>0$. We can pass through the limits $a\to0$ and $r\uparrow\infty$ to obtain \eqref{est:lcabove}.
	\end{proof}
	
	\begin{proof}[Proof of Theorem \ref{thm:front2}]
		As in the proof of Theorem \ref{thm1}, $u_ \lambda(t,x)$ admits the chaos expansion
		\begin{equation*}
			u_ \lambda(t,x)=\sum_{n=0}^\infty \lambda^{\frac n2}I_n(f_n(\cdot,t,x)),
		\end{equation*}
		where $f_n$ is the function defined in \eqref{eq:expression-fn} with $u_0=1$. In addition, we also have
		\begin{equation*}
		n! \|f_n(\cdot,t,x)\|^2_{\HH^{\otimes n}}
		=C^n \int_{[0,t]^n_<} \int_{\RR^n} \prod_{j=1}^n e^{- (s_{j+1} - s_{j}) |\xi^{j} + \cdots + \xi^{1}|^2}\prod_{j=1}^n f(\xi^j) d\xi ds
		\end{equation*}
		for some constant $C$. Using the change of variables $\eta^j=\xi^j+\cdots+\xi^1$, we obtain
		\begin{equation*}
			n! \|f_n(\cdot,t,x)\|^2_{\HH^{\otimes n}}
		=C^n \int_{[0,t]^n_<} \int_{\RR^n} \prod_{j=1}^n e^{- (s_{j+1} - s_{j}) |\eta^j|^2}\prod_{j=1}^n f(\eta^j- \eta^{j-1}) d\xi ds
		\end{equation*}
		with the convention $\eta^0=0$. Using the assumption \ref{H1} we see that
		\begin{equation*}
			\prod_{j=1}^nf(\eta^j- \eta^{j-1})\le \kappa^n \prod_{j=1}^n (f(\eta^j)+f^2(\eta^j))\,.
		\end{equation*}
		Hence, $n! \|f_n(\cdot,t,x)\|^2_{\HH^{\otimes n}}$ is at most
		\begin{align*}
			\left(C \kappa\int_0^\infty\int_\RR e^{-s}|\eta|^2[f(\eta)+f^2(\eta)] d \eta ds\right)^n
			=\left(C \kappa\int_\RR \frac{f(\eta)+f^2(\eta)}{|\eta|^2} d \eta \right)^n\,.
		\end{align*}
		It follows that
		\begin{align*}
			\EE |u_ \lambda(t,x)|^2\le \sum_{n=0}^\infty\left ( \lambda C \kappa \int_\RR \frac{f(\eta)+f^2(\eta)}{|\eta|^2} d \eta\right)^n<\infty,
		\end{align*}
		provided that
		\begin{equation*}
			\lambda< \frac1{C \kappa \int_\RR \frac{f(\eta)+f^2(\eta)}{|\eta|^2} d \eta}\,.
		\end{equation*}
		Hence, $\cee_2(\lambda \gamma)$ vanishes for all $\lambda$ sufficiently small. On the other hand, item (iii) of Proposition \ref{prop:phase} ensures that $\cee_2(\lambda \gamma)>0$ for $\lambda$ sufficiently large. These facts show that phase transition happens. 
	\end{proof}
	We conclude the current section with the following proposition. 
	\begin{proposition} Suppose that phase transition occurs, then for all integers $1<p\le q$, we have
		\begin{equation}
			\frac{p-1}{q-1}\le \frac{\lambda_q^c}{\lambda_p^c} \le 1\,.
		\end{equation}
	\end{proposition}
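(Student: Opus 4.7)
The plan is to derive both bounds by comparing $\cee_p(\lambda\gamma)$ and $\cee_q(\lambda\gamma)$ via Lemma \ref{lem:epeq}, and then exploiting the characterization of $\lambda_n^c$ as the threshold where $\cee_n(\lambda\gamma)$ turns from zero to positive. More precisely, by Theorem \ref{thm:u01} combined with \eqref{tmp:nmmt} and the fact (observed in the proof of Proposition \ref{prop:phase}) that $\lambda\mapsto\cee_n(\lambda\gamma)$ is nondecreasing, we have
\begin{equation*}
\lambda_n^c=\sup\{\lambda>0:\cee_n(\lambda\gamma)=0\}=\inf\{\lambda>0:\cee_n(\lambda\gamma)>0\}\,.
\end{equation*}
So it is enough to track which values of $\lambda$ force $\cee_p(\lambda\gamma)$ or $\cee_q(\lambda\gamma)$ to become strictly positive.

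For the upper bound $\lambda_q^c\le \lambda_p^c$, I would apply \eqref{est:e1} with $\gamma$ replaced by $\lambda\gamma$. This yields $\cee_p(\lambda\gamma)/p\le \cee_q(\lambda\gamma)/q$, so any $\lambda$ with $\cee_p(\lambda\gamma)>0$ automatically satisfies $\cee_q(\lambda\gamma)>0$. Taking infimum over such $\lambda$ gives $\lambda_q^c\le \lambda_p^c$, i.e.\ $\lambda_q^c/\lambda_p^c\le 1$.

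For the lower bound $\lambda_q^c/\lambda_p^c\ge (p-1)/(q-1)$, I would invoke the second inequality in \eqref{est:e2}, again applied to $\lambda\gamma$ in place of $\gamma$:
\begin{equation*}
\frac{\cee_q(\lambda\gamma)}{q}\le \frac{\cee_p\bigl(\tfrac{q-1}{p-1}\lambda\gamma\bigr)}{p}\,.
\end{equation*}
Hence if $\cee_q(\lambda\gamma)>0$ then $\cee_p\bigl(\tfrac{q-1}{p-1}\lambda\gamma\bigr)>0$, which forces $\tfrac{q-1}{p-1}\lambda\ge \lambda_p^c$, i.e.\ $\lambda\ge \tfrac{p-1}{q-1}\lambda_p^c$. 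Taking the infimum of such $\lambda$ yields $\lambda_q^c\ge \tfrac{p-1}{q-1}\lambda_p^c$, as desired.

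There is essentially no obstacle, since Lemma \ref{lem:epeq} has already packaged the hard analytic content (H\"older's inequality for the upper bound and Nelson's hypercontractivity for the lower bound). The only small point to verify is that $\lambda_p^c$ and $\lambda_q^c$ are both finite and positive under the phase transition hypothesis, which is guaranteed by Proposition \ref{prop:phase}(ii), so the ratio makes sense and the argument goes through cleanly.
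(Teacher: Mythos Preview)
Your proposal is correct and follows exactly the route the paper intends: the paper's own proof simply reads ``It is straightforward from Lemma \ref{lem:epeq},'' and your argument spells out precisely how \eqref{est:e1} yields $\lambda_q^c\le\lambda_p^c$ and how the second inequality in \eqref{est:e2} yields $\lambda_q^c\ge\frac{p-1}{q-1}\lambda_p^c$.
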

	\begin{proof}
		It is straightforward from Lemma \ref{lem:epeq}.
	\end{proof}

\section{Appendix}\label{sec:app}
In this section we state and prove several technical lemmas which are used along the paper.
\begin{lemma}\label{Var BB}
Let $\{B_{0,t}(s), s \in [0,t]\}$ be a Brownian bridge in $\RR^\ell$. Let $0 < s_1 < \dots < s_n < s_{n+1}=t$, then for any $\xi^j\in\RR^\ell, 1 \leq j \leq n$ we have
\begin{eqnarray*}
\EE \left( \sum_{j=1}^n \xi^j \cdot B_{0,t}(s_j) \right)^2 = \sum_{j=1}^n |\xi^1 + \cdots + \xi^j|^2 (s_{j+1}-s_j) - \frac{1}{t} \left|\sum_{j=1}^n (\xi^1 + \cdots  +\xi^j)(s_{j+1}-s_j) \right|^2\,.
\end{eqnarray*}
\end{lemma}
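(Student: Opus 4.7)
The plan is to reduce the computation to one involving Brownian bridge \emph{increments}, which are easier to handle because we can exploit the boundary condition $B_{0,t}(t)=0$.

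First, I would introduce the partial sums $\eta^j:=\xi^1+\cdots+\xi^j$ (with $\eta^0=0$) and apply summation by parts. Since $\xi^j=\eta^j-\eta^{j-1}$, a direct Abel-type manipulation combined with the boundary identity $B_{0,t}(s_{n+1})=B_{0,t}(t)=0$ yields
\begin{equation*}
\sum_{j=1}^n \xi^j\cdot B_{0,t}(s_j) \;=\; -\sum_{j=1}^n \eta^j\cdot\bigl(B_{0,t}(s_{j+1})-B_{0,t}(s_j)\bigr).
\end{equation*}
This is the key bookkeeping step, and it is the natural rewriting because the right-hand side has the same general structure as the formula we want to prove.

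Next, I would compute the variance of the right-hand side using the coordinatewise covariance
\[
\EE\bigl[B_{0,t}^a(s)B_{0,t}^b(r)\bigr]=\delta_{ab}\bigl(s\wedge r-\tfrac{sr}{t}\bigr).
\]
A short calculation gives
\[
\EE\bigl[(B_{0,t}(s_{j+1})-B_{0,t}(s_j))\cdot(B_{0,t}(s_{k+1})-B_{0,t}(s_k))\bigr]
=\ell\,\delta_{jk}(s_{j+1}-s_j)-\frac{(s_{j+1}-s_j)(s_{k+1}-s_k)}{t}\cdot\ell
\]
coordinatewise, so after pairing with $\eta^j\cdot\eta^k$ the diagonal contribution is $\sum_j|\eta^j|^2(s_{j+1}-s_j)$ while the off-diagonal cross term assembles into the perfect square $\tfrac1t\bigl|\sum_j \eta^j(s_{j+1}-s_j)\bigr|^2$. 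Combining these two contributions yields precisely the claimed identity.

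The computation is essentially routine; the only place to be a little careful is the diagonal term, where the Brownian bridge covariance produces both a piece $(s_{j+1}-s_j)$ and a piece $(s_{j+1}-s_j)^2/t$, the latter of which must be absorbed into the sum defining the square $\bigl|\sum_j \eta^j(s_{j+1}-s_j)\bigr|^2/t$. Once this bookkeeping is done the formula falls out. I do not expect any real obstacle: the substantive idea is the summation by parts that replaces $B_{0,t}(s_j)$ by increments, after which the boundary identity $B_{0,t}(t)=0$ makes the cross terms telescope into the quadratic form displayed in the statement.
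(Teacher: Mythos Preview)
Your argument is correct, and it takes a genuinely different route from the paper's proof. The paper expands $\EE\bigl(\sum_j \xi^j\cdot B_{0,t}(s_j)\bigr)^2$ directly via the covariance $s_j\wedge s_k-\tfrac{s_js_k}{t}$, then rewrites everything in terms of the increments $\Delta s_p=s_{p+1}-s_p$ and the partial sums $\eta^q=\xi^1+\cdots+\xi^q$; the key step there is a purely combinatorial regrouping of the double sum $\sum_{p<q}|\eta^q-\eta^p|^2\Delta s_p\Delta s_q$ into the claimed form. Your approach instead front-loads the structural simplification: the Abel summation $\sum_j\xi^j\cdot B_{0,t}(s_j)=-\sum_j\eta^j\cdot\bigl(B_{0,t}(s_{j+1})-B_{0,t}(s_j)\bigr)$, enabled by $B_{0,t}(t)=0$, converts the problem into one about increment covariances, which have the clean rank-one form $\delta_{jk}\Delta s_j-\tfrac{1}{t}\Delta s_j\Delta s_k$. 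The quadratic form then factors immediately into the diagonal sum and the perfect square, with no combinatorics needed. Your route is more conceptual and shorter; the paper's is more hands-on but avoids needing the boundary condition $B_{0,t}(t)=0$ explicitly. One small presentational point: the displayed identity with the factors of $\ell$ is not the object you actually use---what enters is the per-coordinate covariance paired against $\eta^j\cdot\eta^k$, as you indicate immediately afterward---so you might streamline that step when writing it up.
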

\begin{proof}
Using the covariance of the Brownian bridge, we can write
\begin{eqnarray*}
\EE \left( \sum_{k=1}^n \xi^k \cdot B_{0,t} (s_k)\right)^2 &=&  
 \sum_{j,k=1}^n \xi^{k}\cdot \xi^j (s_k\wedge s_j -\frac{1}{t} s_k s_j)\\
&=&\frac{1}{t}\left[ \sum_{j=1}^n |\xi^j|^2 s_j(t-s_j)+ 2 \sum_{1\leq j < k \leq n} \xi^k\cdot \xi^j s_j (t-s_k) \right]\,.
\end{eqnarray*}
Let $s_0=0$ and write $\Delta s_j= s_{j+1}-s_j$. Then,  the above expression can be written as
\begin{eqnarray*}
\EE \left( \sum_{k=1}^n \xi^k  \cdot B_{0,t}(s_k)\right)^2 &=& \frac{1}{t} \Bigg[\sum_{j=1}^n |\xi^j|^2 \sum_{0 \leq p \leq j-1} \Delta s_p \sum_{j \leq q \leq n} \Delta s_q \\
&& \qquad  + 2 \sum_{0 \leq j < k \leq n} \xi^k\cdot \xi^j \sum_{0 \leq p \leq j-1} \Delta s_p \sum_{k \leq q \leq n} \Delta s_q \Bigg]\\
&=&\frac{1}{t}\left[\sum_{0 \leq p < q \leq n} \left ( \sum_{j=p+1} ^q |\xi^j|^2 + \sum_{p+1 \leq j < k \leq q} 2\xi^k\cdot \xi^j \right) \Delta s_p \Delta s_q \right]\,.
\end{eqnarray*}
Write $\eta^q = \xi^1+ \cdots + \xi^q$,  $\eta^0=0$. Then we obtain 
\begin{eqnarray*}
\EE \left( \sum_{k=1}^n \xi^k \cdot  B_{0,t}(s_k)\right)^2& =& \frac{1}{t}\left[\sum_{0 \leq p < q \leq n} \left |\eta^q - \eta^p\right|^2 \Delta s_p \Delta s_q \right]\\
&=&\frac{1}{2t} \left[\sum_{p,q = 0}^n |\eta^p|^2 \Delta s_p \Delta s_q +\sum_{p,q = 0}^n |\eta^q|^2 \Delta s_p \Delta s_q  - \sum_{p,q =0}^n  \eta^p\cdot \eta^q \Delta s_p \Delta s_q \right]\\
&=& \sum_{j=1}^n |\xi^1 + \cdots + \xi^j|^2 (s_{j+1}-s_j) - \frac{1}{t} \left|\sum_{j=1}^n (\xi^1 + \cdots +\xi^j)(s_{j+1}-s_j) \right|^2\,,
\end{eqnarray*}
which proves the lemma. 
\end{proof}
 \begin{lemma}\label{lem:Sub add BB}
Assume that $\gamma$ satisfies conditions \ref{H1} or \ref{H2}. For each $r>0$, let $B_{0,r}$ be a Brownian bridge in $\RR^\ell$. Then we have for any $t_1,t_2 > 0$, we have
\begin{eqnarray*}
\EE \exp \left(\int_0^{t_1+t_2} \gamma(B_{0,t_1+t_2}(s)) ds \right) \leq \EE \exp \left(\int_0^{t_1} \gamma(B_{0,t_1}(s)) ds \right) \EE \exp \left(\int_0^{t_2} \gamma(B_{0,t_2}(s)) ds \right)\,.
\end{eqnarray*}
In other words, the map $t\mapsto T(t):={ \log}\EE \exp \left(\int_0^{t} \gamma(B_{0,t}(s)) ds \right)$ is subadditive. 
\end{lemma}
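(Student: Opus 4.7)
My plan is to prove the subadditivity first for the bounded approximation $\gamma_\epsilon$ defined in \eqref{eqn:ge} and then pass to the limit $\epsilon \downarrow 0$. The essential device is the Markov decomposition of the Brownian bridge at the intermediate time $t_1$, combined with an application of Lemma \ref{lem:gamma}.

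First, I would set $X := B_{0,t_1+t_2}(t_1)$, a centered Gaussian in $\RR^\ell$ with variance $\sigma^2 = t_1 t_2/(t_1+t_2)$. By a standard property of Brownian bridges (which can be deduced from Lemma \ref{lem:BB decomp} and its time-reversed analog), conditionally on $X = x$ the two restrictions $\{B_{0,t_1+t_2}(s); s\in[0,t_1]\}$ and $\{B_{0,t_1+t_2}(t_1+r); r\in[0,t_2]\}$ are independent, distributed respectively as $\widetilde{B}^{(1)}_{0,t_1}(s) + \tfrac{s}{t_1}x$ and $\widetilde{B}^{(2)}_{0,t_2}(r) + (1-\tfrac{r}{t_2})x$, where $\widetilde{B}^{(1)}$, $\widetilde{B}^{(2)}$ are independent Brownian bridges independent of $X$. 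Splitting the integral at $s = t_1$ and conditioning on $X$ gives
\[
\EE\exp\Bigl(\int_0^{t_1+t_2}\gamma_\epsilon(B_{0,t_1+t_2}(s))\,ds\Bigr) = \EE\bigl[A_1^\epsilon(X)\,A_2^\epsilon(X)\bigr],
\]
where $A_i^\epsilon(x) := \EE\exp\bigl(\int_0^{t_i}\gamma_\epsilon(\widetilde{B}^{(i)}_{0,t_i}(s) + y_i(s,x))\,ds\bigr)$, with the deterministic affine drifts $y_1(s,x) = \tfrac{s}{t_1}x$ and $y_2(r,x) = (1 - \tfrac{r}{t_2})x$.

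Next, I would invoke Lemma \ref{lem:gamma} with $n=2$, the second process taken identically zero, and the single pair shift $y^{12} = y_i(\cdot, x)$. Since $\gamma_\epsilon$ is bounded with nonnegative spectral measure $e^{-\epsilon|\xi|^2}\mu(d\xi)$, the lemma (whose proof only requires that the Gaussian characteristic function $\EE e^{i\sum\xi^q\cdot G_{s_q}}$ be real and positive, and is unaffected by the second process being degenerate) yields
\[
A_i^\epsilon(x) \le \EE\exp\Bigl(\int_0^{t_i}\gamma_\epsilon(\widetilde{B}^{(i)}_{0,t_i}(s))\,ds\Bigr) = e^{T^\epsilon(t_i)},
\]
uniformly in $x$, where $T^\epsilon(t) := \log\EE\exp(\int_0^t\gamma_\epsilon(B_{0,t}(s))\,ds)$. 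Combining with the factorization above, I obtain $T^\epsilon(t_1+t_2) \le T^\epsilon(t_1) + T^\epsilon(t_2)$.

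Finally, I would pass to the limit $\epsilon \downarrow 0$ using the $L^p$-convergence established in the remark following Proposition \ref{prop:expbridge} (which holds for a single bridge by the same Fourier and triangle-inequality proof), so that $T^\epsilon(t) \to T(t)$ and the subadditivity is inherited. The main obstacle is controlling this limit under hypothesis \ref{H1}, where $\gamma$ is only a tempered distribution and $T(t)$ is defined solely through the regularization; here the uniform exponential integrability in $\epsilon$ analogous to \eqref{k9} is the crucial input that legitimizes both the conditional factorization and the passage to the limit.
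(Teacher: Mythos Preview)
Your proof is correct and follows essentially the same strategy as the paper: reduce to the bounded approximation $\gamma_\epsilon$, decompose the bridge at the intermediate time $t_1$, and invoke Lemma~\ref{lem:gamma} to remove the deterministic shifts. The only cosmetic difference is that you condition on the single value $B_{0,t_1+t_2}(t_1)$ and treat both pieces symmetrically, whereas the paper conditions on the full filtration $\mathcal{G}_{t_1}$, bounds the second piece, and then uses the time-reversal symmetry of the bridge to handle the first piece by the same argument.
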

 \begin{proof}
Thanks to Proposition \ref{prop:expbridge}, we can assume that $\gamma:\RR^\ell\to\RR$ is a bounded positive definite function.  Let $\mathcal{G}_{r}$ be the filtration generated by $B_{0,t_1+t_2}$ from $0$ to $r$, then we have
\begin{multline}\label{tmp:bt1t2}
\EE \exp \left(\int_0^{t_1+t_2} \gamma(B_{0,t_1+t_2}(s)) ds \right)\\
=\EE \left [ \exp \left( \int_0^{t_1} \gamma(B_{0,t_1+t_2}(s)) ds \right) \EE\left( \exp \left( \int_{t_1}^{t_1+t_2} \gamma(B_{0,t_1+t_2}(s)) ds \right)\Big| \mathcal{G}_{t_1}\right)\right]\,.
\end{multline}
Applying Lemmas \ref{lem:BB decomp} and \ref{lem:gamma}, we see that
\begin{align*}
	\EE\left( \exp \left( \int_{t_1}^{t_1+t_2} \gamma(B_{0,t_1+t_2}(s)) ds \right)\Big| \mathcal{G}_{t_1}\right)
	&\le\sup_{x\in\RR^\ell}\EE \exp \left( \int_{t_1}^{t_1+t_2} \gamma(B_{t_1,t_1+t_2}(s)+x) ds \right)
	\\&=T(t_2)\,.
\end{align*}
Observe that $\{B_{0,t_1+t_2}(t_1+t_2-s),s\in[0,t_1+t_2]\}$ is also a Brownian bridge, thus together with a change of variable, we have
\begin{align*}
	\EE  \exp \left( \int_0^{t_1} \gamma(B_{0,t_1+t_2}(s))  ds\right)=\EE  \exp \left( \int_{t_2}^{t_1+t_2} \gamma(B_{0,t_1+t_2}(s))  ds\right)\,.
\end{align*}
By the same argument as above, the above expectation is at most $T(t_1)$. Together with \eqref{tmp:bt1t2}, we obtain the result.
 \end{proof}

\begin{lemma}\label{Lemma:Conv chaos} 
We assume either condition \ref{H1} or \ref{H2}.  For each $t>0$ and $n\ge1$, we define
\begin{eqnarray*}
I_n(t) &=& D^n\int_{[0,t]_<^n} \int_{\RR^{n\ell}} \exp \left\{\frac{1}{t} \Big|\sum_{i=1}^n (s_{i+1} -s_{i}) (\xi^{i}+\cdots+\xi^{1} ) \Big|^2 \right.\\
&&\left.- \sum_{i=1}^n(s_{i+1} - s_{i}) |\xi^{i} + \cdots + \xi^{1}|^2 \right\}    \mu(d\xi) ds\,,
\end{eqnarray*}
for some constant $D\in \RR$, where $\mu(d\xi) =\prod_{j=1}^n \mu(d\xi^j)$ and $ds=ds_1\cdots ds_n$. Then
\begin{equation}
\sum_{n=0}^{\infty} I_n(t)\leq c_1 \exp (c_2 t)
\end{equation}
where  $c_1$ and $c_2$ are two constants depending on $D$.
\end{lemma}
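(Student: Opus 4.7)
My plan is to express $\sum_n I_n(t)$ as an exponential Feynman-Kac moment of a Brownian bridge functional and then bound this moment via a direct iterated estimate.

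By Lemma \ref{Var BB} the exponent inside the integrand of $I_n(t)$ equals $-\EE(\sum_{j=1}^n \xi^j \cdot B_{0,t}(s_j))^2$, where $B_{0,t}$ is a Brownian bridge in $\RR^\ell$. Using the Gaussian characteristic function identity $e^{-\mathrm{Var}(Y)} = \EE e^{i(Y - Y')}$ (with $Y'$ an independent copy of the centered real Gaussian $Y$) and introducing an independent copy $B'_{0,t}$ of the bridge, we may rewrite the integrand as $\EE e^{i\sum_j \xi^j \cdot (B_{0,t} - B'_{0,t})(s_j)}$. To swap this expectation with the $\mu^{\otimes n}$-integration I regularize $\mu$ by the finite measure $\mu_\varepsilon(d\xi) = e^{-\varepsilon|\xi|^2}\mu(d\xi)$ and write $I_n^\varepsilon(t)$ for the corresponding quantity; Fubini then applies, and using $B_{0,t} - B'_{0,t} \stackrel{\mathrm{law}}{=} \sqrt 2\, B_{0,t}$ together with the symmetry $\int_{[0,t]_<^n} = \tfrac{1}{n!}\int_{[0,t]^n}$ one obtains
\[
\sum_{n\ge 0} I_n^\varepsilon(t) = \EE \exp\Bigl(D(2\pi)^\ell \int_0^t \gamma_\varepsilon\bigl(\sqrt 2\, B_{0,t}(s)\bigr)\,ds\Bigr).
\]
Monotone convergence ($\mu_\varepsilon \uparrow \mu$ as $\varepsilon \downarrow 0$ and $e^{\text{exponent}}\ge 0$) gives $I_n^\varepsilon(t)\uparrow I_n(t)$ and therefore reduces the lemma to a uniform-in-$\varepsilon$ bound of order $c_1 e^{c_2 t}$ on the right hand side.

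For the uniform bound I would proceed by iterated Fourier-space integration directly on $I_n^\varepsilon(t)$. After the change of variables $\eta^i = \xi^1 + \cdots + \xi^i$, the quadratic form $\xi^T C \xi$ (with $C$ the Brownian bridge covariance matrix at $s_1, \ldots, s_n$) admits a Schur-complement decomposition: integrating out $\xi^n$ reduces the remaining integral to the same structural form on $[0, s_n]$ with $n-1$ variables and contributes a translated Gaussian factor $\int e^{-C_{nn}|\xi - z_n|^2}\mu(d\xi)$, where $C_{nn} = s_n(t-s_n)/t$ and $z_n$ is a linear combination of $\xi^1, \ldots, \xi^{n-1}$; this factor is bounded via Dalang's condition $\int \mu(d\xi)/(1+|\xi|^2)<\infty$ by $C(1 + 1/C_{nn})$. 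Under \ref{H2}, nonnegativity of $\gamma$ makes this bound uniform in the shift $z_n$; under \ref{H1}, the submultiplicativity $f(\xi + \eta) \le \kappa(f(\xi) + f(\eta))$ combined with the stronger integrability $\int f^2(\xi)/(1+|\xi|^2)d\xi < \infty$ decouples the shift. Iterating and integrating over the simplex then yields $\sum_n I_n^\varepsilon(t) \le c_1 e^{c_2 t}$ uniformly in $\varepsilon$. The main obstacle is the careful bookkeeping under \ref{H1}: the recursive shift $z_n$ generates, via the subadditivity of $f$, up to $2^n$ subterms of the form $\prod_i f(\eta^i)^{a_i}$ with $a_i \in \{0,1,2\}$, each a mixed Gaussian integral controlled by \ref{H1}(b); one must verify that summing these contributions over the simplex produces a series whose $n$-th coefficient is of order $R^n/n!$ for some geometric $R$, so that the total remains finite uniformly in $\varepsilon$ and grows only exponentially in $t$.
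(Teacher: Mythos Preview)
Your first reduction --- recognising the exponent as the negative variance of $\sum_j \xi^j\cdot B_{0,t}(s_j)$ via Lemma~\ref{Var BB}, regularising by $\mu_\varepsilon$, and summing to obtain $\sum_n I_n^\varepsilon(t)=\EE\exp\bigl\{D\int_0^t\gamma_\varepsilon(\sqrt 2\,B_{0,t}(s))\,ds\bigr\}$ --- is exactly the paper's Step~1, and your monotone-convergence passage $I_n^\varepsilon\uparrow I_n$ is the closing argument of the paper's Step~3.

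Where you diverge is in the estimation of this exponential moment. The paper does \emph{not} attack the bridge quadratic form directly. Instead (Step~3) it uses the time-symmetry $B_{0,t}\stackrel{\rm law}{=}B_{0,t}(t-\cdot)$ together with Cauchy--Schwarz to bound the bridge moment by $\EE\exp\{2D\int_0^{t/2}\gamma_\varepsilon(\sqrt2\,B_{0,t}(s))\,ds\}$, and then invokes the Girsanov density formula~\eqref{id:density} to replace $B_{0,t}$ by a genuine Brownian motion $B$ on $[0,t/2]$. The gain is decisive: for Brownian motion the exponent in $\eta$-variables is the \emph{diagonal} form $-\sum_j(s_{j+1}-s_j)|\eta^j|^2$, so after inserting the harmless factor $e^{-c\sum_j(s_{j+1}-s_j)}=e^{-ct}$ the time integrals factor as $\prod_j\int_0^\infty e^{-r(c+|\eta^j|^2)}dr=\prod_j(c+|\eta^j|^2)^{-1}$. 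Both the \ref{H2} decoupling (via nonnegativity of $\gamma$) and the \ref{H1} bookkeeping (the $2^{k-1}$ subterms you mention) then reduce to geometric series in $\int\frac{\mu(d\eta)}{c+|\eta|^2}$, respectively $\int\frac{f(\eta)+f^2(\eta)}{c+|\eta|^2}d\eta$, summable for $c$ large.

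Your Schur-complement recursion on the bridge covariance is algebraically correct --- the conditional law of $(B_{0,t}(s_1),\ldots,B_{0,t}(s_{n-1}))$ given $B_{0,t}(s_n)$ is indeed a shifted bridge on $[0,s_n]$ --- and the shift-uniform bound $\int e^{-C|\xi-z|^2}\mu(d\xi)\le\int e^{-C|\xi|^2}\mu(d\xi)$ under \ref{H2} is valid. But after iterating you are left with
\[
\int_{[0,t]^n_<}\prod_{j=1}^n g\Bigl(\frac{s_j(s_{j+1}-s_j)}{s_{j+1}}\Bigr)\,ds,\qquad g(\sigma^2)=\int e^{-\sigma^2|\xi|^2}\mu(d\xi),\quad s_{n+1}=t,
\]
whose factors are coupled through the nested time variables and do not factorise. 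Extracting an $R^n$-type bound from this integral is precisely the step you flag as ``the main obstacle'' and leave unverified. It is plausibly doable (for instance via $\sigma_j^2\ge\tfrac12\min(s_j,s_{j+1}-s_j)$ and an induction), but it is genuinely harder than the paper's route, which sidesteps the whole issue by trading the bridge for a Brownian motion \emph{before} any Fourier-space estimation.
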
          

\begin{proof}
	Fix $\epsilon>0$, let $I_n^ \epsilon(t)$ be defined as $I_n(t)$ but with $\mu(d \xi)$ being replaced by $e^{-\epsilon|\xi|^2}\mu(d \xi)$. 

	\textit{Step 1:} We claim that
	\begin{equation*}
		\sum_{n=0}^\infty I_n^\epsilon(t)=\EE\exp\lt\{D\int_0^t \gamma_ \epsilon(\sqrt2 B_{0,t})ds  \rt\}\,.
	\end{equation*}
	Indeed, for each $n\ge0$, applying Lemma \ref{Var BB}, we have
	\begin{align} \notag
	\EE\left[   \int_0^t \gamma_ \epsilon(\sqrt 2 B_{0,t})ds \right]^n
	&=\int_{[0,t]^n} \int_{(\RR^\ell)^n} \EE \exp\left\{i\sqrt2 \sum_{k=1}^n \xi^k \cdot B_{0,t}(s_k)  \right\}  \mu_\epsilon(d\xi) ds\\ \notag
	&=n!\int_{[0,t]^n_<} \int_{(\RR^\ell)^n} \exp\left\{-{\rm Var} \left(  \sum_{k=1}^n \xi^k \cdot B_{0,t}(s_k)  \right) \right\}\mu_\epsilon(d\xi) ds\\ \notag
	&=n!\int_{[0,t]^n_<} \int_{(\RR^\ell)^n} \exp\Bigg\{ \sum_{j=1}^n - |\xi^1 + \cdots + \xi^j|^2 (s_{j+1}-s_j) \\
	& +\frac{1}{t} \left|\sum_{j=1}^n (\xi^1 + \cdots+ \xi^j)(s_{j+1}-s_j) \right|^2 \Bigg\}\mu_\epsilon(d\xi) ds\,,
	\label{eqn:d.int.form}
	\end{align}
	where $[0,t]^n_<$ is defined in \eqref{k8} and we use the notations
 	$\mu_\epsilon(d\xi)  = \prod_{k=1}^n e^{-\epsilon |\xi^k|^2}\mu(d\xi^k)$ and  $ds = ds_1 \cdots ds_n$. Together with the  Taylor expansion of $e^x$, we obtain the claim. 

 	\textit{Step 2:} We show that for every $t>0$
 	\begin{equation*}
 		\sup_{\epsilon>0}\EE\exp\lt\{2D\int_0^t \gamma_ \epsilon(\sqrt 2 B(s))ds \rt\}\le c_1 e^{c_2t} 
 	\end{equation*}
 	for some positive constant $c_1,c_2$.
 	In fact, using Taylor's expansion,
 \begin{align*}
  \EE \exp \left(2 D \int_0^{t} \gamma_{\epsilon}(\sqrt{2} B(s)) ds \right) 
  \le \sum_{k=0}^{\infty} \frac{(2D)^k}{k!} \int_{[0,t]^k} \int_{\RR^{\ell k}} \prod_{j=1}^k e^{-(s_{\sigma(i+1)}-s_{\sigma(i)})|\xi^{\sigma(j)}+ \cdots + \xi^{\sigma(1)}|^2} \mu(d\xi) ds,
 \end{align*}
where $\sigma$ denotes the permutation of $\{1,2, \dots, k\}$ such that $0 < s_{\sigma(1)} < \cdots < s_{\sigma(k)} < t$, and we denote $s_{\sigma(k+1)} = t$. We treat conditions \ref{H1} and \ref{H2} separately. 

{\it Case 1:} under the condition  \ref{H2}. 
Using the assumption that $\gamma$ is positive and positive definite {and an argument similar to \cite{HHNT}*{estimate (3.7)} we conclude }that the last expression above is bounded by

 \begin{eqnarray*}
&& \sum_{k=0}^{\infty} \frac{(2D)^k}{k!} \int_{[0,t]^k} \int_{\RR^{\ell k}} \prod_{j=1}^k e^{-(s_{\sigma(i+1)}-s_{\sigma(i)})|\xi^{\sigma(j)}|^2} \mu(d\xi) ds\\
 &=& \sum_{k=0}^{\infty} (2D)^k\int_{[0,t]_<^k} \int_{\RR^{\ell k}} \prod_{j=1}^k e^{-(s_{i+1}-s_{i})|\xi^j|^2} \mu(d\xi) ds\\
 &\leq& e^{ct} \sum_{k=0}^{\infty} (2D)^k\int_{[0,t]_<^k} \int_{\RR^{\ell k}} \prod_{j=1}^k e^{-(s_{i+1}-s_{i})(c+|\xi^j|^2)} \mu(d\xi) ds \\
 &\leq& e^{ct} \sum_{k=0}^{\infty} (2D)^k \int_{\RR^{\ell k}} \prod_{j=1}^k \frac{1}{c+|\xi^j|^2} \mu(d\xi)\,,
 \end{eqnarray*}
 the series is convergent when $c$ is sufficiently {large, which  completes }the proof. 

{\it Case 2:}  Under the condition \ref{H1}.  Then from the above we see that
\begin{align*}
\EE\exp\lt\{2D\int_0^t \gamma_ \epsilon(\sqrt 2B(s))ds \rt\}
\leq\sum_{k=0}^{\infty} (2D)^k \int_{[0,t]_<^k} \int_{\RR^{ k}} \prod_{j=1}^k e^{-(s_{i+1}-s_i)|\xi^{j}+ \cdots + \xi^{1}|^2} \mu(d\xi) ds\,.
\end{align*}
 Making the change of variables $\xi^{j}+ \cdots + \xi^{1} \to \eta^j$ for $j = 1, \dots, k$ together with condition \ref{H1}, the above series is at most 
 \begin{eqnarray*}
  && \sum_{k=0}^{\infty} (2\kappa D)^k \int_{[0,t]_<^k} \int_{\RR^{k}} \prod_{j=1}^k e^{-(s_{i+1}-s_i)|\eta^j|^2} \prod_{j=1}^k f(\eta^j-\eta^{j-1}) d\eta ds\\
 &\leq& \sum_{k=0}^{\infty} (2 D)^k \int_{[0,t]_<^k} \int_{\RR^{k}} \prod_{j=1}^k e^{-(s_{i+1}-s_i)|\eta^j|^2} \prod_{j=1}^k f(\eta^j-\eta^{j-1}) d\eta ds\\
 &\leq&\sum_{k=0}^{\infty} (2 \kappa D)^k \int_{[0,t]_<^k} \int_{\RR^{k}} \prod_{j=1}^k e^{-(s_{i+1}-s_i)|\eta^j|^2} f(\eta^1)\sum_{\alpha \in \mathcal{D}_k} \prod_{j=1}^k f(\eta^j)^{\alpha_j} d\eta ds\\
 &\leq&e^{ct}\sum_{k=0}^{\infty} (2 \kappa D)^k \int_{[0,t]_<^k} \int_{\RR^{k}} \prod_{j=1}^k e^{-(s_{i+1}-s_i)(c+ |\eta^j|^2)} f(\eta^1)\sum_{\alpha \in \mathcal{D}_k} \prod_{j=1}^k f(\eta^j)^{\alpha_j} d\eta ds\\
 &\leq&e^{ct}\sum_{k=0}^{\infty} (2 \kappa D)^k \int_{\RR^{k}} \prod_{j=1}^k \frac{1}{c+ |\eta^j|^2} f(\eta^1)\sum_{\alpha \in \mathcal{D}_k} \prod_{j=1}^k f(\eta^j)^{\alpha_j} d\eta ds
 \end{eqnarray*}
 where $c$ is a positive constant to be chosen later, $\mathcal{D}_k$ is a subset of multi-indices of length $k-1$ and $\text{Card}(\mathcal{D}_k)=2^{k-1}$ and for any $\alpha \in \mathcal{D}_k$, $\alpha_i \in \{0, 1, 2\}$, $i = 2,\dots, k$ but $\alpha_1 \in \{0,1\}$.  Using the inequality 
 \begin{equation*}
 \int_{\RR}\frac{f(\eta)}{c+\eta^2} d\eta \leq \left( \int_{\RR} \frac{1}{c+\eta^2} d\eta \right)^{1/2} \left( \int_{\RR} \frac{f^2(\eta)}{c+\eta^2} d\eta \right)^{1/2}
 \end{equation*}
 if necessary, we conclude that 
 \begin{eqnarray*}
  \sum_{n=0}^{\infty} I_n &\leq& e^{ct} \sum_{k=0}^{\infty} (4\kappa D)^k \left(\int_{\RR}\frac{f^2(\eta)}{c+\eta^2}d\eta \right)^k \,,
 \end{eqnarray*}
we see that when $c$ is sufficiently big, the above series is convergent.

\textit{Step 3:}
By Cauchy-Schwarz inequality and the fact that $B_{0,t}\stackrel{\textrm{law}}{=}B_{0,t}(t-\cdot)$ we have
 	\begin{align*}
 		\EE\exp\lt\{D\int_0^t \gamma_ \epsilon(\sqrt 2 B_{0,t}(s))ds \rt\}\le \EE\exp\lt\{2D\int_0^{\frac t2}\gamma_ \epsilon(\sqrt 2B_{0,t}(s))ds \rt\}\,.
 	\end{align*}
 	Together with \eqref{id:density}, we arrive at
 	\begin{equation}\label{est:brbyb}
 		\EE\exp\lt\{D\int_0^t \gamma_ \epsilon(\sqrt 2 B_{0,t}(s))ds \rt\}\le\EE\exp\lt\{2D\int_0^{\frac t2}\gamma_ \epsilon(\sqrt 2B(s))ds \rt\}\,.
 	\end{equation}
 	It follows from previous steps and \eqref{est:brbyb} that for all $t>0$
 	\begin{equation*}
 		\sup_{\epsilon>0}\sum_{n=0}^\infty I_n^ \epsilon(t)=\sup_{\epsilon>0}\EE\exp\lt\{D\int_0^t \gamma_ \epsilon(\sqrt 2 B_{0,t}(s))ds \rt\} \le c_1e^{c_2t}\,.
 	\end{equation*}
 	On the other hand, by monotone convergence theorem, it is evident that
 	\begin{equation*}
 		\sup_{\epsilon>0}\sum_{n=0}^\infty I_n^ \epsilon(t)=\sum_{n=0}^\infty I_n(t)\,.
 	\end{equation*}
This completes the proof. 
\end{proof}

\begin{lemma}\label{lem:ldev}
Let $A_M^+=\{(y^1,\dots,y^n)\in([0,\infty)^\ell)^n:|y^j-y^k|\le M \text{ for all } 1\le j<k\le n \}$ and $\bar \alpha=(\alpha ,0,\dots,0)\in\RR^\ell$. Then
	\begin{align}   \label{k1}
		\lim_{t\to\infty}\frac1t\log\int_{A_M^+}\exp\left\{-\sum_{j=1}^n\left(\frac{|y^j-  \bar\alpha t|^2}{2t}+\beta |y^j|\right)\right\}dy
		= \left\{
		\begin{array}{lll}
			\frac {n\beta^2}2- n\beta \alpha & \text{ if } & \alpha> \beta\\
			 -\frac {n\alpha^2}{2} & \text{ if } & \alpha\le \beta\,.
		\end{array}
		\right.
	\end{align}
\end{lemma}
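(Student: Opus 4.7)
The idea is to treat the integral as a Laplace-type integral in $\RR^{n\ell}$ with rate function proportional to $\phi(z):=\tfrac{1}{2}|z-\bar\alpha|^2+\beta|z|$. After the rescaling $y^j=tz^j$, the integrand becomes $\exp\bigl(-t\sum_{j=1}^n\phi(z^j)\bigr)$ and the constraint set $A_M^+$ transforms into $\tilde A_M^+:=\{z\in([0,\infty)^\ell)^n:|z^j-z^k|\le M/t\}$, which collapses to the diagonal of $[0,\infty)^\ell$ as $t\to\infty$. Standard large-deviations heuristics then suggest the answer is $-n\phi^*$, where $\phi^*:=\inf_{z\in[0,\infty)^\ell}\phi(z)$.

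For the upper bound I would use the inclusion $\tilde A_M^+\subseteq([0,\infty)^\ell)^n$ to drop the constraint, then factor the resulting integral as the $n$-th power of $\int_{[0,\infty)^\ell} e^{-t\phi(z)}dz$, to which a standard Laplace-type estimate applies (with only polynomial corrections in $t$), giving the bound $-n\phi^*$ on the limsup. For the lower bound I would produce an explicit subregion of $A_M^+$ concentrated near $tz^*$, where $z^*\in[0,\infty)^\ell$ is a minimizer of $\phi$. Taking $y^j=tz^*+w^j$ with $w^j\in[0,M/(2\sqrt\ell)]^\ell$, nonnegativity of both $z^*$ and $w^j$ ensures $y^j\in[0,\infty)^\ell$, while $|w^j-w^k|\le M/2$ ensures $(y^j)\in A_M^+$; moreover this box has fixed positive volume independent of $t$. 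The triangle inequality then gives $\phi(y^j/t)\le\phi^*+O(1/t)$ uniformly over the box, which upon integration produces the matching liminf lower bound $-n\phi^*$.

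The main concrete step is the explicit identification of $\phi^*$. I would first observe that for each fixed $z_1\ge 0$, the map $z_\perp\mapsto\phi(z_1,z_\perp)$ is coordinatewise nondecreasing on $[0,\infty)^{\ell-1}$ (both $|z_\perp|^2$ and $|z|$ increase in each $z_{\perp,i}\ge 0$), so the infimum reduces to the one-dimensional problem $\inf_{z_1\ge 0}\bigl[\tfrac{1}{2}(z_1-\alpha)^2+\beta z_1\bigr]$. An elementary calculus exercise then separates the two regimes: for $\alpha>\beta$ the minimum is attained in the interior at $z_1=\alpha-\beta$ with value $\beta\alpha-\beta^2/2$, while for $\alpha\le\beta$ it is attained at the boundary $z_1=0$ with value $\alpha^2/2$. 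Multiplying by $-n$ reproduces the two cases in the right-hand side of \eqref{k1}.
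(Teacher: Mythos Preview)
Your proposal is correct and the overall Laplace-method strategy coincides with the paper's. In particular, the upper bound is handled identically: enlarge $A_M^+$ to a product domain, factor, and apply the Laplace principle to the single-coordinate integral.

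For the lower bound your argument is more streamlined than the paper's. The paper proceeds by case analysis: it first treats $\ell=1$, splitting further into $\alpha>\beta$ and $\alpha\le\beta$, and in each subcase factors out the leading exponential $e^{-n\phi^* t}$ and then shows by hand that the remaining integral over a suitable subregion of $A_M^+$ has $\liminf_{t\to\infty}\frac1t\log(\cdot)\ge 0$; it then reduces $\ell\ge 2$ to $\ell=1$ via the crude bound $|y^j|\le\sum_k|y^j_k|$, which separates the coordinates. Your single box $\{y^j=tz^*+w^j:\,w^j\in[0,M/(2\sqrt\ell)]^\ell\}$ handles all dimensions and both regimes at once, using only the Lipschitz continuity of $\phi$ near $z^*$ together with the positivity of $z^*$ and $w^j$ to verify the constraint $y^j\in[0,\infty)^\ell$. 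This is cleaner; the paper's route has the minor advantage of giving slightly more explicit control of the subleading terms in each case, but nothing in the proof of Theorem~\ref{thm:speed} requires that refinement.
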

\begin{proof}
	We first show the upper bound of \eqref{k1}. We observe that
	\begin{equation*}
		\int_{A_M^+}\exp\left\{-\sum_{j=1}^n\left(\frac{|y^j- \bar\alpha t|^2}{2t}+\beta |y^j|\right)\right\}dy\le\int_{(\RR^\ell)^n}\exp\left\{-\sum_{j=1}^n\left(\frac{|y^j-  \bar\alpha t|^2}{2t}+\beta |y^j_1|\right)\right\}dy\,.
	\end{equation*}
	By a change of variables, the integral on the right hand side of the above inequality is the same as
	\begin{equation*}
		t^{n\ell}\int_{(\RR^\ell)^n}\exp\left\{-t\sum_{j=1}^n\left(\frac{|y^j_1- \alpha |^2}{2}+\beta |y^j_1|+ \sum_{k=2}^\ell \frac{|y^j_k|^2}{2}\right)\right\}dy\,.
	\end{equation*}
	Applying Laplace principle, we see that the limit
	\begin{equation*}
		\lim_{t\to\infty}\frac1t\log\int_{(\RR^\ell)^n}\exp\left\{-\sum_{j=1}^n\left(\frac{|y^j-  \bar\alpha t|^2}{2t}+\beta |y^j_1|\right)\right\}dy
	\end{equation*}
	equals to the right hand side of \eqref{k1}. This shows the upper bound in \eqref{k1}.

	To show the lower bound, we consider two cases.

	\textbf{Case 1:} $\ell=1$. We first notice that the integral in  (\ref{k1}) can be written as
	\begin{align*}
		e^{(\frac{n\beta^2}2-n\alpha \beta)t }I(t)
	\end{align*}
	where
	\begin{align*}
		I(t)= \int_{A_M^+}\exp\left\{-\sum_{j=1}^n\frac{(y^j-(\alpha- \beta)t)^2}{2t}\right\}dy\,.
	\end{align*}
	We consider the case $\alpha>\beta$. For each $j=2,\dots,n$, we have $(y^j-(\alpha- \beta)t)^2\le2(y^j-y^1)^2+2(y^1-(\alpha- \beta)t)^2$. We then write
	\begin{align*}
		I(t)&\ge \int_{0}^\infty  e^{-\frac{n-1}{t}(y^1-(\alpha- \beta)t)^2} \prod_{j=2}^n\left( \int_{y^j\ge0, |y^j-y^1|\le M/2}e^{-\frac{1}t (y^j-y^1)^2} dy^j\right)  dy^1
		\\&\ge e^{-\frac{M^2}{4t}} \int_{0}^\infty  e^{-\frac{n-1}{t}(y^1-(\alpha- \beta)t)^2} h(y^1) dy^1,
	\end{align*}
	where
	\begin{align*}
		h(y^1)=\left( \int_{y\ge0, |y-y^1|\le M/2} dy\right)^{n-1}\,.
	\end{align*}
	We continue to estimate
	\begin{align*}
		I(t)\ge e^{-\frac{M^2}{4t}}\int_{(\alpha- \beta)t}^{2(\alpha- \beta)t} e^{-\frac{n-1}{t}(y^1-(\alpha- \beta)t)^2} h(y^1) dy^1
	\end{align*}
	On the region $[(\alpha- \beta)t,2(\alpha- \beta)t]$,  assuming that $t$ satisfies $\alpha -\beta \ge \frac M{2t}$, we have $h(y^1)=M^{n-1}$, thus
	\begin{align*}
		I(t)&\ge M^{n-1} e^{-\frac{M^2}{4t}}\int_{(\alpha- \beta)t}^{2(\alpha- \beta)t} e^{-\frac{n-1}{t}(y^1-(\alpha- \beta)t)^2}  dy^1
		\\&\ge tM^{n-1} e^{-\frac{M^2}{4t}}\int_{(\alpha- \beta)}^{2(\alpha- \beta)} e^{-(n-1)t(z-(\alpha- \beta))^2}  dz.
	\end{align*}
	It follows from Laplace principle that
	\begin{equation*}
		\liminf_{t\to\infty}\frac1t\log I(t)\ge 0\,.
	\end{equation*}

	We now consider the case $\alpha\le \beta$. We write
	\begin{align*}
		\int_{A_M^+}\exp\left\{\sum_{j=1}^n\left(-\frac{(y^j- \alpha t)^2}{2t}-\beta y^j\right)\right\}dy
		=e^{-\frac{n\alpha^2}2 t} J(t),
		\end{align*}
		where
		\[
		J(t)=	
		\int_{A_M^+}\exp\left\{-\frac1{2t}\sum_{j=1}^n(|y^j|^2+2(\beta- \alpha)ty^j)\right\}dy.
 \]
	We can write
	\begin{align*}
	J(t) \ge 	\int_{0}^te^{-\frac1{2t}((2(n-1)) |y^1|^2+2n (\beta- \alpha)ty^1)} g(y^1) dy^1,
	\end{align*}
	where
	\begin{equation*}
		g(y^1)=\left(\int_{y\ge0,|y-y^1|\le M/2} e^{-\frac{1}{2t}(2|y-y^1|^2+2(\beta- \alpha)t(y-y^1))}dy\right)^{n-1}
	\end{equation*}
	On $[0,t]$, $g(y^1)$ is at least $(\frac M2 e^{-\frac1{2t}  (\frac {M^2} 2 + (\beta -\alpha) Mt)}  )^{n-1} $. Thus for every $t\ge1$,
	\[
	J(t) \ge  (\frac M2 e^{-\frac1{2t}  (\frac {M^2} 2 + (\beta -\alpha) Mt)}  )^{n-1} 
	t\int_{0}^ 1e^{- t   \left(  (n-1) y^2  + n (\beta- \alpha)y \right)} dy
	\]
	As before, by the Laplace principle,
	\[
	\liminf_{t\to\infty}\frac1t\log J(t)\ge 0\,.
	\]

	\textbf{Case 2:} $\ell\ge2$. For each $1\le j\le n$, applying the inequality $|y^j|\le \sum_{k=1}^\ell |y^j_k|$ we see that $\exp\left\{-\sum_{j=1}^n\left(\frac{|y^j- \bar\alpha t|^2}{2t}+\beta |y^j|\right)\right\}$ is at least
	\begin{align*}
		\exp\lt\{-\sum_{j=1}^n \lt(\frac{(y^j_1- \alpha t)^2}{2t}+\beta|y^j_1|\rt) \rt\}\prod_{k=2}^\ell \exp\left\{-\sum_{j=1}^n\left(\frac{|y^j_k|^2}{2t}+\beta |y^j_k|\right)\right\}\,.
	\end{align*}
	In addition, the domain $A_M^+$ contains the region $$\{(y^1,\dots,y^n)\in([0,\infty)^\ell)^n:|y^j_k-y^p_k|\le M' \text{ for all } 1\le k\le \ell \text{ and } 1\le j<p\le n\} $$ for some positive constant $M'$. The above region can be written as $A^\ell$ where $A$ is the set $\{z=(z^1,\dots,z^n) \in[0,\infty)^n:|z^j-z^p|\le M'\mbox{ for all }1\le j<p\le n\}$. Therefore, the left hand side of \eqref{k1} is at least
	\begin{multline*}
		\liminf_{t\to\infty}\frac1t\log\int_{ A}\exp\lt\{-\sum_{j=1}^n \lt(\frac{(y^j_1- \alpha t)^2}{2t}+\beta|y^j_1|\rt) \rt\}dy^1\cdots dy^n
		\\
		+\sum_{k=2}^\ell\liminf_{t\to\infty}\frac1t\log\int_{ A}\exp\left\{-\sum_{j=1}^n\left(\frac{|y^j_k|^2}{2t}+\beta |y^j_k|\right)\right\}dy^1\cdots dy^n\,.
	\end{multline*}
	We can apply the result in Case 1 to conclude the proof. 
\end{proof}

\noindent
	\textbf{Acknowledgment:} The authors thank L. Chen for sending them  the manuscript \cite{CKK} which   inspired Section \ref{sec:phase}.    	J. Huang and K. L\^e were  supported by the NSF  Grant no. 0932078 000, while  they visited the Mathematical Sciences Research Institute in Berkeley, California, during the Fall 2015 semester.  D. Nualart was supported by the NSF grant no.  DMS1512891 and the ARO grant no. FED0070445. {We would like to thank two anonymous referees for careful reading of the paper and many constructive suggestions. }

\begin{bibdiv}
\begin{biblist}
\bib{B}{article}{
   author={Balan, Raluca M.},
   title={The stochastic wave equation with multiplicative fractional noise:
   a Malliavin calculus approach},
   journal={Potential Anal.},
   volume={36},
   date={2012},
   number={1},
   pages={1--34},
}

\bib{BJQ}{article}{
   author={Balan, Raluca M.},
   author={Jolis, Maria},
   author={Quer-Sardanyons, Llu{\'{\i}}s},
   title={SPDEs with affine multiplicative fractional noise in space with
   index $\frac14<H<\frac12$},
   journal={Electron. J. Probab.},
   volume={20},
   date={2015},
   pages={no. 54, 36},
   issn={1083-6489},
}
\bib{Chenbook}{book}{
   author={Chen, Xia},
   title={Random walk intersections},
   series={Mathematical Surveys and Monographs},
   volume={157},
   note={Large deviations and related topics},
   publisher={American Mathematical Society, Providence, RI},
   date={2010},
   pages={x+332},
   isbn={978-0-8218-4820-3}
}
\bib{Chen}{article}{
	author={Chen, X.},
	title={Moment asymptotics for parabolic Anderson equation with fractional time-space noise: in Skorodhod regime},
   journal={Ann. Inst. Henri Poincar\'e Probab. Stat.},
   date={to appear},
}

\bib{CD}{article}{
   author={Chen, Le},
   author={Dalang, Robert C.},
   title={Moments and growth indices for the nonlinear stochastic heat
   equation with rough initial conditions},
   journal={Ann. Probab.},
   volume={43},
   date={2015},
   number={6},
   pages={3006--3051},
   issn={0091-1798},
}

\bib{CKK}{article}{
	author={Chen, L.},
	author={Kim, K.},
	title={Nonlinear stochastic heat equation driven by spatially colored noise: moments and intermittency},
	journal={manuscript},
}
\bib{Ch15}{article}{
   author={Chen, Xia},
   title={Precise intermittency for the parabolic Anderson equation with an
   $(1+1)$-dimensional time-space white noise},
   journal={Ann. Inst. Henri Poincar\'e Probab. Stat.},
   volume={51},
   date={2015},
   number={4},
   pages={1486--1499},
   issn={0246-0203},
}

\bib{Chetal15}{article}{
   author={Chen, Xia},
   author={Hu, Yaozhong},
   author={Song, Jian},
   author={Xing, Fei},
   title={Exponential asymptotics for time-space Hamiltonians},
   journal={Ann. Inst. Henri Poincar\'e Probab. Stat.},
   volume={51},
   date={2015},
   number={4},
   pages={1529--1561},
   issn={0246-0203},
}

\bib{ChPh15}{article}{
      author={Chen, X.},
      author={Phan, T.~V.},
       title={Free energy in a mean field of brownian particles},
     journal={preprint},
}

\bib{Conus}{article}{
        author={Conus, D.},
        title={Moments for the parabolic Anderson model: on a result by Hu and Nualart},
        journal={Commun. Stoch. Anal. 7 (2013), no. 1, 125-152. }

}
\bib{CK}{article}{
   author={Conus, Daniel},
   author={Khoshnevisan, Davar},
   title={On the existence and position of the farthest peaks of a family of
   stochastic heat and wave equations},
   journal={Probab. Theory Related Fields},
   volume={152},
   date={2012},
   number={3-4},
   pages={681--701},
}
		
\bib{Dal}{article}{
author={Dalang, R.},
title={Extending the martingale measure stochastic integral with applications to spatially homogeneous s.p.d.e.'s},
journal={Electron. J. Probab},
volume={4, no. 6},
date={1999},
pages={29}
}

\bib{DQ}{article}{
author={Dalang, R.},
author={Quer-Sardanyons, L.},
title={Stochastic integrals for spde's: a comparison},
journal={Expo. Math.},
volume={29, no. 1},
date={2011},
pages={67-109}
}

 
\bib{FN}{article}{
   author={Foondun, Mohammud},
   author={Nualart, Eulalia},
   title={On the behaviour of stochastic heat equations on bounded domains},
   journal={ALEA Lat. Am. J. Probab. Math. Stat.},
   volume={12},
   date={2015},
   number={2},
   pages={551--571},
   issn={1980-0436},
}

\bib{HHNT}{article}{
author={Hu, Y.},
author={Huang, J.},
author={Nualart, D.},
author={Tindel, S.},
title={Stochastic heat equations   with general  multiplicative Gaussian noises: H\"older continuity and intermittency},
journal={\it Electron. J. Probab.},
volume={20},
date={2015},
pages={1-50}
}

\bib{HHLNT}{article}{
author={Hu, Y.},
author={Huang, J.},
author={L\^e, K.},
author={Nualart, D.},
author={Tindel, S.},
title={Stochastic heat equation with rough dependence in space},
journal={preprint}
}
      
\bib{HN}{article}{
author={Hu, Y.},
author={Nualart, D.},
title={Stochastic heat equation driven by fractional noise and local time},
journal={Probab. Theory Related Fields},
volume={143, no. 1-2},
date={2009},
pages={285-328}
}
\bib{JFV}{book}{
   author={L{\H{o}}rinczi, J{\'o}zsef},
   author={Hiroshima, Fumio},
   author={Betz, Volker},
   title={Feynman-Kac-type theorems and Gibbs measures on path space},
   series={de Gruyter Studies in Mathematics},
   volume={34},
   note={With applications to rigorous quantum field theory},
   publisher={Walter de Gruyter \& Co., Berlin},
   date={2011},
   pages={xii+505},
   isbn={978-3-11-020148-2},
}
		
\bib{Nakao}{article}{
   author={Nakao, S.},
   title={On the spectral distribution of the Schr\"odinger operator with random potential},
   journal={Japan. J. Math.},
   volume={3},
   date={1977},
   number={1},
}		
		
\bib{N}{article}{
   author={Noble, J. M.},
   title={Evolution equation with Gaussian potential},
   journal={Nonlinear Anal.},
   volume={28},
   date={1997},
   number={1},
   pages={103--135},
   issn={0362-546X},
}
\bib{Nua}{book}{
   author={Nualart, David},
   title={The Malliavin calculus and related topics},
   series={Probability and its Applications (New York)},
   edition={2},
   publisher={Springer-Verlag, Berlin},
   date={2006},
   pages={xiv+382},
}	     

\end{biblist}
\end{bibdiv}

\end{document}